\title{Spectral convergence of random regular graphs: Chebyshev polynomials, non-backtracking walks, and unitary-color extensions}
\author{}
    \author[1]{Yulin Gong}\author[2]{Wenbo Li}\author[3]{Shiping Liu}
\affil[1]{Department of Mathematical Sciences, Tsinghua University, Beijing 100084, China}
\affil[2,3]{School of Mathematical Sciences, University of Science and Technology of China, Hefei 230026, China}
\affil[1]{gongyl22@mails.tsinghua.edu.cn}
\affil[2]{patlee@mail.ustc.edu.cn}
\affil[3]{spliu@ustc.edu.cn}
\date{}
\begin{document}
\maketitle

\newtheorem{theorem}{Theorem}[section]
\newtheorem{proposition}{Proposition}[section]
\newtheorem{corollary}{Corollary}[section]
\newtheorem{lemma}{Lemma}[section]
\newtheorem{definition}{Definition}[section]
\newtheorem{remark}{Remark}[section]
\newtheorem{eg}{Example}[section]
\newtheorem{Notation}{Notation}[section]
\begin{abstract}

%The study of the spectrum of regular graphs using Chebyshev polynomials and non-backtracking walks can be traced back to Friedman’s work and was further developed to analyze the convergence of spectral measures, as seen in the works of Serre and Sodin. 

In this paper, we give a short proof of the weak convergence to the Kesten-McKay distribution for the normalized spectral measures of random $N$-lifts. This result is derived by generalizing a formula of Friedman involving Chebyshev polynomials and non-backtracking walks. We also extend a criterion of Sodin on the convergence of graph spectral measures to regular graphs of growing degree. As a result, we show that for a sequence of random $(q_n+1)$-regular graphs $G_n$ with $n$ vertices, if $q_n = n^{o(1)}$ and $q_n$ tends to infinity, the normalized spectral measure converges almost surely in $p$-Wasserstein distance to the semicircle distribution for any $p \in [1, \infty)$. This strengthens a result of Dumitriu and Pal. Many of the results are extended to unitary-colored regular graphs.

\end{abstract}

\section{Introduction}

A sequence of regular graphs $G_n=(V_n,E_n)$ with fixed vertex degree and $|V_n|$ growing to infinity tends to be sparse due to the linear growth of edge numbers. Generically, $G_n$ contains very few number of circles per vertex as $n$ tends to infinity. 
This \emph{locally tree-like} structure has been noticed since the work of Bollob\'as \cite{Bollobas}
and Wormald \cite{Wormald}. For a comprehensive introduction to more related results, see \cite[Section 1]{Bauerschmidt-Huang-Yau} and the references therein. One classic result tells that the locally tree-like condition guarantees the weak convergence of the spectral measure of $G_n$ to the Kesten-McKay distribution as $n$ tends to infinity \cite{McKay}.

The \emph{locally tree-like} structure can be quantitatively understood by a series of polynomials modified from the Chebyshev polynomials as noticed by Friedman \cite[Lemma 3.3]{Friedman91}, see Lemma \ref{ArandXrq} for details. These polynomials has been employed to study the convergence of spectral measures of regular graphs, for example in \cite{Serre} and \cite{Sodin}. More precisely, for a $(q+1)\text{-}$regular graph $G=(V,E)$ with adjacency matrix $A$, the normalized spectral measure of $G$ is defined as \begin{equation}
    \mu(G):=\frac{1}{|V|}\sum_{1\leq k\leq |V|}\delta_{ q^{-1/2}\lambda_k(A)},
\end{equation}
where $\lambda_{|V|}(A) \leq \cdots \leq \lambda_2(A) \leq \lambda_1(A)$ are the eigenvalues of $A$. All information of the graph spectrum is then hidden in the integration of polynomials against this measure.
The usual moment method for the study of graph spectrum is based on the counting of closed walks due to the following identity: 
$$\int_\mathbb{R} x^r d\mu(G)=q^{-r/2}\cdot  \text{average number of closed walks of length $r$ per vertex}.$$
The value on the right hand side of the above identity, however, can be extremely large and difficult to compute even for graphs of large girth. As noticed by previous works, 
in many cases it is more convienient  to compute the following quantity instead,
$$\int_\mathbb{R} X_{r,q}(x) d\mu(G)=q^{-r/2}\cdot \text{average number of closed NBW of length $r$ per vertex},$$
where $X_{r,q}$ is a polynomial of degree $r$
modified from Chebyshev polynomials.
Roughly speaking, a non-backtracking walk (NBW) is a special type of walk on a graph where, after traversing an edge, the walk does not immediately return along the same edge, see Definition \ref{NBW}. Since there is no non-trivial closed non-backtracking walks of length less than the girth $g(G)$, 
we have for all $1\leq r< g(G)$ that
$$\int_\mathbb{R} X_{r,q}(x) d\mu(G)=0.$$
In particular, the Kesten-McKay distribution $\mu_q$, i.e., the normalized spectral measure of a $(q+1)\text{-}$regular tree, satisfies
$$\int_\mathbb{R} X_{r,q}(x) d\mu_q=0,\ \,\text{for any}\,\, r\geq 1.$$
We mention that $X_{0,q}\equiv 1$. The above identity tells that the polynomials $X_{r,q}, r\geq 1$ are orthogonal to $X_{0,q}$ with respect to the Kesten-McKay distribution $\mu_q$. Indeed, the polynomials $\{X_{r,q}\}_{r=0}^\infty$ form a complete orthogonal basis of $L^2(\mathbb{R},\mu_q)$, see \cite[Equation (24)]{Serre}, \cite[Lemma 2.5]{Sodin} and references therein.

In this paper, we use these Chebyshev polynomials to prove results on spectral measure convergence of (unitary-colored) regular graphs with fixed or growing vertex degrees. In Section \ref{section:4}, by generalizing Friedman \cite[Lemma 3.3]{Friedman91}, recalled in Lemma \ref{ArandXrq} below, to unitary-colored case, we give a short proof of the weak convergence to the Kesten-McKay distribution for the normalized spectral measures of random $N$-lifts. This relies on a criterion as noticed by Sodin \cite[Lemma 2.8]{Sodin}. 
By generalizing this criterion to random graphs of growing degree, in Section \ref{section:5} we study sequences of regular graphs with growing vertex degrees and the convergence of the spectral measures to the semicircle distribution. Dumitriu and Pal \cite{Dumitriu-Pal} show that if $q_n=n^{o(1)}$ and $q_n$ tends to infinity, then the normalized spectral measure of a random $(q_n+1)$-regular graph on $n$ vertices converges weakly to the semicircle distribution in probability. Later, Tran, Vu, and Wang \cite{Tran-Vu-Wang} extends their result to the case that $q_n \leq n/2$. By applying our generalization of Sodin's criterion, we show that the condition that $q_n=n^{o(1)}$ and $q_n$ tends to infinity, in fact guarantees a stronger convergence, that is, convergence \emph{in $p$-Wasserstein distance $W_{p}$ for any $p\in [1,\infty)$} to the semicircle distribution \emph{almost surely} (see Theorem \ref{Dumitriu-Pal}). One advantage of Wasserstein distance between probability measures on $\mathbb{R}$ lies in its explicit expression via the generalized inverse distribution functions (IDF). This approach via Wasserstein distance is motivated by \cite{GHL,GJLS}.

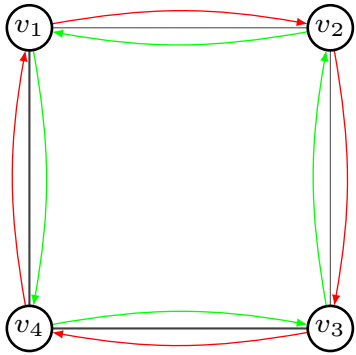
\begin{figure}\label{figure:NBWintroduction}
    \centering
    \begin{tikzpicture}
\Vertex[color=white, x=0, y=4, label=$v_1$, fontscale=1.5]{A};
\Vertex[color=white, x=4, y=4, label=$v_2$, fontscale=1.5]{B};
\Vertex[color=white, x=0, y=0, label=$v_4$, fontscale=1.5]{D};
\Vertex[color=white, x=4, y=0, label=$v_3$, fontscale=1.5]{C};
\Edge[lw=0.1](A)(B);
\Edge[lw=0.8](A)(D);
\Edge[lw=0.1](B)(C);
\Edge[lw=0.8](C)(D);

\Edge[lw=0.5pt, bend=10, color=red, Direct](A)(B);
\Edge[lw=0.5pt, bend=10, color=red, Direct](B)(C);
\Edge[lw=0.5pt, bend=10, color=red, Direct](C)(D);
\Edge[lw=0.5pt, bend=10, color=red, Direct](D)(A);

\Edge[lw=0.5pt, bend=10, color=green, Direct](A)(D);
\Edge[lw=0.5pt, bend=10, color=green, Direct](D)(C);
\Edge[lw=0.5pt, bend=10, color=green, Direct](C)(B);
\Edge[lw=0.5pt, bend=10, color=green, Direct](B)(A);
\end{tikzpicture}
    \caption{The only two closed NBW started from $v_1$ of length 4 in $C_4$.}
    \label{fig:enter-label}
\end{figure}

\section{Preliminaries} 
\subsection{Non-Backtracking Walk on Graphs}
Let $G=(V,E)$ be an undirected graph, where the edge set $E$ may contain multi-edges and multi-loops.  Let $\vec{E}$ be the set of directed edges obtained from $E$, such that each element of $E$ corresponds to two distinct directed edges in $\vec{E}$. For every directed edge $e \in \vec{E}$,  denote the origin (resp., terminus) of $e$ as $o(e)$ (resp., $t(e)$)  and denote its inverse as $\overline{e}$. \\

Recall a \emph{walk} from $a$ to $b$ is a sequence of vertices and edges $$\gamma=(\{v_i\}_{i=0}^{n}, \{e_i\}_{i=0}^{n-1})$$ such that $v_i=o(e_i), v_{i+1}=t(e_i)$, and $v_0=a$, $v_n=b$. The number $n$ is called the \emph{length} of the walk. The set of edges $\{e_i\}_{i=0}^{n-1}$ is allowed to be empty for a walk of length $0$. If $a=b$, we call $\gamma$ a \emph{closed walk}. A walk of length $0$ is seen as a trivial closed walk. For two walks $\gamma_1=(\{v^1_i\}_{i=0}^{n}, \{e^1_i\}_{i=0}^{n-1})$ and $\gamma_2=(\{v^2_j\}_{j=0}^{m}, \{e^2_i\}_{j=0}^{m-1})$ such that $v^1_n=v^2_0$, we define their multiplication to be a new walk $\gamma_1 \gamma_2:=(\{v^1_0,v^1_1,...,v^1_n,v^2_1,v^2_2,...v^2_m\}, 
\{e^1_0,e^1_1,...,e^1_n, e^2_0,e^2_1,...e^2_m\})$ of length $m+n$.\\

\begin{definition}\label{NBW}
    A \emph{non-backtracking walk} is a walk $\gamma=(\{v_i\}_{i=0}^{n}, \{e_i\}_{i=0}^{n-1})$ such that $e_{i}\neq \overline{e_{i+1}}$, $0 \leq i\leq n-2$. We consider the trivial walks non-backtracking. The \emph{girth} of a graph is the length of its shortest nontrivial closed non-backtracking walk. A \emph{circuit} is a non-trivial closed non-backtracking walk $\gamma=(\{v_i\}_{i=0}^{n}, \{e_i\}_{i=0}^{n-1})$ such that $e_{0}\neq \overline{e_{n-1}}$. A circuit $\gamma$ is called \emph{prime} if  $\gamma=\gamma_0^k$ for another circuit $\gamma_0$, then $\gamma=\gamma_0$ and $k=1$.
\end{definition}

\begin{remark}
    As a comparison, a \emph{circle} in a graph $G$ is a (finite) connected subgraph of degree $2$. 
\end{remark}

As an example, in the complete graph $K_4$ illustrated in Figure \ref{2}, the subgraph generated by $\{v_1,v_2,v_3\}$ is a circle, the walk $v_4 \rightarrow v_3 \rightarrow v_2 \rightarrow v_1\rightarrow v_3 \rightarrow v_4$ depicted on the left of Figure \ref{2} is a closed non-backtracking walk but not a circuit, and the walk $v_1 \rightarrow v_2 \rightarrow v_3 \rightarrow v_1 \rightarrow v_4 \rightarrow v_3\rightarrow v_1$ depicted on the right of Figure \ref{2} is a prime circuit.
\ \\
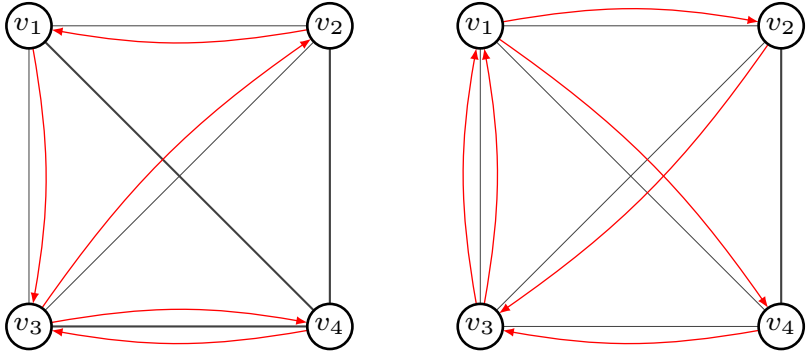
\begin{figure}
    \centering
   \begin{tikzpicture}
\Vertex[color=white, x=0, y=4, label=$v_1$, fontscale=1.5]{A};
\Vertex[color=white, x=4, y=4, label=$v_2$, fontscale=1.5]{B};
\Vertex[color=white, x=0, y=0, label=$v_3$, fontscale=1.5]{C};
\Vertex[color=white, x=4, y=0, label=$v_4$, fontscale=1.5]{D};
\Edge[lw=0.1](A)(B);
\Edge[lw=0.1](A)(C);
\Edge[lw=0.8](A)(D);
\Edge[lw=0.1](B)(C);
\Edge[lw=0.8](B)(D);
\Edge[lw=0.8](C)(D);
\Edge[lw=0.5pt, bend=10, color=red, Direct](C)(D);
\Edge[lw=0.5pt, bend=10, color=red, Direct](D)(C);
\Edge[lw=0.5pt, bend=10, color=red, Direct](C)(B);
\Edge[lw=0.5pt, bend=10, color=red, Direct](B)(A);
\Edge[lw=0.5pt, bend=10, color=red, Direct](A)(C);

\Vertex[color=white, x=6, y=4, label=$v_1$, fontscale=1.5]{E};
\Vertex[color=white, x=10, y=4, label=$v_2$, fontscale=1.5]{F};
\Vertex[color=white, x=6, y=0, label=$v_3$, fontscale=1.5]{G};
\Vertex[color=white, x=10, y=0, label=$v_4$, fontscale=1.5]{H};
\Edge[lw=0.1](E)(F);
\Edge[lw=0.1](E)(G);
\Edge[lw=0.1](E)(H);
\Edge[lw=0.1](F)(G);
\Edge[lw=0.8](F)(H);
\Edge[lw=0.1](G)(H);
\Edge[lw=0.5pt, bend=10, color=red, Direct](E)(F);
\Edge[lw=0.5pt, bend=10, color=red, Direct](F)(G);
\Edge[lw=0.5pt, bend=10, color=red, Direct](G)(E);
\Edge[lw=0.5pt, bend=10, color=red, Direct](E)(H);
\Edge[lw=0.5pt, bend=10, color=red, Direct](H)(G);
\Edge[lw=0.5pt, bend=-10, color=red, Direct](G)(E);
\end{tikzpicture}
    \caption{Examples of closed NBW and circuit}
    \label{2}
\end{figure}

\begin{Notation}
    For a graph $G=(V,E)$, denote the number of closed non-backtracking walks of length $r$ as $f_r(G)$, the number of circuits of length $r$ as $c_r(G)$ and the number of circles of size $r$ as $Z_r(G)$.
\end{Notation}
Notice that by definition, for any graph $G=(V,E)$, there holds
$$c_0(G)\equiv 0\,\,\,\,\,\text{and}\,\,\,\,\,f_0(G)=|V|.$$
On a regular graph, these numbers are related to each other as follows.

\begin{lemma}\label{NBWandcircle}
Let $G=(V,E)$ be a $(q+1)\text{-}$regular graph. The following holds for $r\geq 1$:\\
\begin{equation}\label{frcr}
    f_r=c_r+(q-1)\sum_{1\leq i<  r/2}q^{i-1}c_{r-2i},
\end{equation}
and
\begin{equation}\label{frzr}
    2rZ_r(G)\leq c_r(G) \leq f_r(G)\leq (q+1)^{2}q^{2r-2}\sum_{1\leq k \leq r}kZ_k(G).
\end{equation}
\end{lemma}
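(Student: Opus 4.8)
The plan is to prove the two displays separately: the exact identity \eqref{frcr} by a deterministic ``peeling'' decomposition of closed non-backtracking walks, and the three estimates in \eqref{frzr} by elementary counting, with only the rightmost inequality requiring real work. For \eqref{frcr}, I would first introduce an operation on a closed non-backtracking walk $\gamma=(\{v_i\}_{i=0}^r,\{e_i\}_{i=0}^{r-1})$: say $\gamma$ \emph{backtracks at the base} if $e_0=\overline{e_{r-1}}$, which is exactly the condition that $\gamma$ fails to be a circuit. In that case $v_{r-1}=v_1$, and deleting $e_0$ and $e_{r-1}$ yields a shorter closed non-backtracking walk based at $v_1$ whose internal non-backtracking conditions are inherited from those of $\gamma$. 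Iterating this removal until the walk no longer backtracks at its base produces, after $i$ steps, a \emph{core} that is a circuit $\sigma$ of length $r-2i$. I would check that the core is never the trivial walk: a closed non-backtracking walk of length $2$ satisfies $e_0\neq\overline{e_1}$ and so cannot backtrack at its base, so peeling always halts at a circuit of length $\ge 1$ (using the stated convention that each loop yields two distinct directed edges, so a single loop is itself a circuit of length $1$). This assigns to every closed non-backtracking walk a unique pair (core circuit $\sigma$, number of peels $i$) with $r-2i\ge 1$.

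Next I would count, for a fixed circuit $\sigma$ of length $s=r-2i$ based at $w$, the closed non-backtracking walks of length $r$ whose core is $\sigma$, via the inverse ``growth'' operation: at the current base $w'$ prepend a directed edge $a$ with $t(a)=w'$ and append $\overline{a}$. The crux is the number of admissible $a$ at each step. At the first growth step the two junction conditions read $a\neq\overline{b_0}$ and $a\neq b_{s-1}$, where $b_0,b_{s-1}$ are the first and last edges of $\sigma$; since $\sigma$ is a circuit these two forbidden edges are distinct, leaving $(q+1)-2=q-1$ choices among the edges into $w$. At every later step the current walk begins with some $a'$ and ends with $\overline{a'}$, so both junction conditions collapse to the single constraint $a\neq\overline{a'}$, leaving $q$ choices. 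Hence a core of length $s$ with $i=(r-s)/2\ge 1$ arises in exactly $(q-1)q^{i-1}$ ways, while $i=0$ contributes the circuit itself; summing over all circuits gives $f_r=c_r+(q-1)\sum_{1\le i<r/2}q^{i-1}c_{r-2i}$, i.e.\ \eqref{frcr}.

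For \eqref{frzr}, the left and middle inequalities are immediate: every circuit is in particular a closed non-backtracking walk, giving $c_r\le f_r$; and each circle of size $r$ can be traversed as a closed non-backtracking walk in $2r$ ways (choice of starting vertex and of orientation), each traversal being a circuit of length $r$ whose edge set recovers the circle, so this assignment is injective and $2rZ_r\le c_r$. The substantive part is the rightmost bound. I would first observe that a non-backtracking walk on a forest can never be closed, since on a tree a non-backtracking walk strictly increases the distance to its origin at every step; hence the edge set of any closed non-backtracking walk of length $r\ge 1$ contains a circle, necessarily of size $k\le r$. I would then charge each closed non-backtracking walk $\gamma$ to a pair $(C,u)$, where $C$ is a circle it contains (chosen by a fixed rule) and $u\in C$ a marked vertex; since $C\subseteq\gamma$, the walk passes through $u$. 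Cutting $\gamma$ at the first visit to $u$ and reversing the initial segment expresses $\gamma$, after recording the cut time $t$, through a pair of non-backtracking walks emanating from $u$ of lengths $t$ and $r-t$, so the number of walks charged to a fixed $(C,u)$ is at most $\sum_{t}(q+1)q^{t-1}(q+1)q^{r-t-1}\le(r+1)(q+1)^2q^{r-2}$. Summing over the $\sum_{1\le k\le r}kZ_k$ pairs $(C,u)$ and using $r+1\le q^{r}$ for $q\ge 2$ (the $2$-regular case $q=1$ being a direct check) yields $f_r\le(q+1)^2q^{2r-2}\sum_{1\le k\le r}kZ_k$.

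The main obstacle is this last inequality: one must argue that every closed non-backtracking walk genuinely carries a circle and then set up a charging that is simultaneously well-defined (each walk charged once) and loose enough to bound the fibre by a clean power of $q$. The factor $(q+1)^2q^{2r-2}$ is deliberately generous, so the only delicate points are the existence of the circle (the forest argument) and verifying that the two emanating walks reconstruct $\gamma$ injectively once the cut time is recorded; by contrast, \eqref{frcr} is routine once the peeling/growth bijection and the $q-1$ versus $q$ dichotomy at the junctions are correctly accounted for.
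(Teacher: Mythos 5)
Your proof is correct and follows essentially the same route as the paper's: the identity \eqref{frcr} via the tail--circuit--reversed-tail decomposition of a closed non-backtracking walk, with $q-1$ admissible edges at the junction with the circuit and $q$ at each subsequent step, and the outer bound in \eqref{frzr} via the observation that any closed non-backtracking walk must meet a circle of size at most $r$ (together with the easy inequalities $2rZ_r\le c_r\le f_r$). The only, immaterial, difference is in assembling the last bound: the paper multiplies (number of circle vertices $y$) by (vertices in $B_r(y)$) by (closed walks per basepoint) to get $(q+1)^2q^{2r-2}\sum_k kZ_k$ directly, whereas you split each walk at its visit to a circle vertex into two emanating non-backtracking walks and then absorb the resulting factor $r+1$ using $r+1\le q^r$ (checking $q=1$ separately).
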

For convenience, we provide a quick proof in Appendix \ref{A}. The above identity \eqref{frcr} can be found in \cite[Equation (107)]{Serre}.

\subsection{Non-backtracking Matrices and Chebyshev-type Polynomials}\label{subsection:2.2}
\begin{definition}
Let $G=(V,E)$ be a $(q+1)$-regular graph with adjacency matrix (operator when $G$ infinite) $A(G)$. The \emph{non-backtracking matrix of length r}, denoted as $A_r(G)$, is a matrix (resp. operator) indexed by $V \times V$, such that for $a,b \in V$, $(A_r(G))_{ab}$ equals the number of non-backtracking walks of length $r$ from $a$ to $b$ in $G$. We denote $A=A(G)$, $A_r=A_{r}(G)$, and $||\cdot||$ as the operator norm of the matrix or bounded linear operator if no confusion arises.
\end{definition}

\begin{remark}
    By definition $A_0\equiv I$, $A_1=A$, and $A_2=A^2-(q+1)I$. Here $I$ is the identity matrix (resp. operator).  
\end{remark}
\begin{remark}
    As a comparison, the entry $A^r_{ab}$ of power of the adjacency matrix $A^r$ equals the number of walks length $r$ from $a$ to $b$ in $G$.
\end{remark}

Friedman \cite[Lemma 3.3]{Friedman91} shows that the non-backtracking matrices are related to the adjacency matrix by a series of Chebyshev-type polynomials. We recall the proof here as a warm-up of our proof of its unitary-color generalization Lemma \ref{lemma:color}.

\begin{lemma}\label{ArandXrq} 
For a $(q+1)$-regular graph $G$ where $q\in \mathbb{N}^+$, we have
\begin{equation}\label{Serrechebyshevmoment}
    A_r=q^{r/2}X_{r,q}(q^{-1/2}A),\ r\in \mathbb{N},
\end{equation}
where $X_{r,q},\,r\in \mathbb{N}$ are polynomials defined by 
\begin{equation}\label{Xrqgenerating}
    \sum_{r=0}^{\infty}X_{r,q}(x)t^{r}=\frac{1-q^{-1}t^2}{1-xt+t^2},
\end{equation}
for $|t|$ small enough.
\end{lemma}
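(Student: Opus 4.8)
The plan is to establish a recurrence relation among the non-backtracking matrices $A_r$ and then show that the generating function of the $A_r$ matches the claimed generating function for $q^{r/2}X_{r,q}(q^{-1/2}A)$.

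First I would derive the key recurrence. Fix $r \geq 2$ and consider the product $A \cdot A_r$. A walk counted by $A A_r$ consists of a single step followed by a non-backtracking walk of length $r$. To turn this into non-backtracking walks, I would classify such concatenations by whether the first step backtracks against the second. This gives a relation of the form
\begin{equation}\label{plan:recurrence}
    A\, A_r = A_{r+1} + q\, A_{r-1},\quad r\geq 2,
\end{equation}
where the term $q A_{r-1}$ accounts for the backtracking case: after the first edge, there are exactly $q$ non-backtracking continuations of length $r-1$ that reverse the initial step (since the graph is $(q+1)$-regular, there are $q$ choices not equal to the reverse edge at the next vertex, but the reversal itself forces a correction). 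I would treat the base cases $r=0,1$ separately using the remarks $A_0=I$, $A_1=A$, $A_2=A^2-(q+1)I$, which give the slightly different low-order relation $A A_1 = A_2 + (q+1)A_0$.

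Next I would package the recurrence \eqref{plan:recurrence} into generating-function form. Setting $F(t):=\sum_{r=0}^\infty A_r t^r$, multiplying the recurrence by $t^{r+1}$ and summing over the appropriate range yields, after accounting for the anomalous low-order terms, an identity of the shape $(I - At + qt^2 I)F(t) = (1 - q^{-1}t^2)I$ — equivalently $F(t) = \frac{(1-q^{-1}t^2)I}{I - tA + qt^2 I}$. Substituting $A = q^{1/2}(q^{-1/2}A)$ and reindexing $t \mapsto q^{-1/2}t$ (or equivalently comparing directly with \eqref{Xrqgenerating} evaluated at $x = q^{-1/2}A$) converts this into $\sum_r q^{-r/2}A_r t^r = \frac{1-q^{-1}t^2}{I - q^{-1/2}A\,t + t^2}$, which is exactly \eqref{Xrqgenerating} with $x$ replaced by the commuting operator $q^{-1/2}A$. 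Matching coefficients of $t^r$ gives $q^{-r/2}A_r = X_{r,q}(q^{-1/2}A)$, i.e.\ \eqref{Serrechebyshevmoment}.

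The main obstacle is getting the recurrence \eqref{plan:recurrence} correct, especially the boundary behavior at $r=0,1$ and the precise constant $q$ (as opposed to $q+1$) appearing in the backtracking term. The subtlety is that a non-backtracking walk of length $r$ extended by one edge may backtrack, and one must carefully count how many length-$(r-1)$ non-backtracking walks arise when the appended edge retraces the previous one; the regularity of degree $q+1$ is what makes this count a clean multiple of $A_{r-1}$, but the low-order terms do not follow the generic pattern and must be checked by hand against $A_2 = A^2-(q+1)I$. Once the recurrence and its initial conditions are pinned down, the generating-function manipulation is a formal and routine computation valid for $|t|$ small enough so that the operator-valued geometric series converges.
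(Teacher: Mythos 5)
Your approach is the same as the paper's: establish the three-term recurrence among the $A_r$, package it as a generating-function identity, and match it against \eqref{Xrqgenerating} evaluated at the self-adjoint operator $q^{-1/2}A$. The recurrence $A\,A_r=A_{r+1}+qA_{r-1}$ for $r\geq 2$, with the anomalous base case $A\,A_1=A_2+(q+1)A_0$, is exactly what the paper uses (written there as $A_r+qA_{r-2}=A_{r-1}A$ for $r\geq 3$), and your counting justification of the constant $q$ versus $q+1$ is sound.

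There is, however, one concrete error in your displayed identities: with $F(t)=\sum_{r\geq 0}A_rt^r$ the recurrence gives
\begin{equation*}
(I-At+qt^2I)\,F(t)=(1-t^2)I,
\end{equation*}
not $(1-q^{-1}t^2)I$ as you wrote --- the coefficient of $t^2$ on the left is $A_2-AA_1+qA_0=-I$, with no factor of $q^{-1}$. The factor $q^{-1}$ only appears \emph{after} the rescaling $t\mapsto q^{-1/2}t$, which turns $(1-t^2)$ into $(1-q^{-1}t^2)$ and produces exactly the right-hand side of \eqref{Xrqgenerating}; your subsequent formula $\sum_r q^{-r/2}A_rt^r=(1-q^{-1}t^2)(I-q^{-1/2}At+t^2I)^{-1}$ is the correct one, but it does not follow from the identity you stated immediately before it (substituting $t\mapsto q^{-1/2}t$ there would give $(1-q^{-2}t^2)I$ on the right). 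So the two displays are mutually inconsistent as written; fixing the first to read $(1-t^2)I$ makes the whole argument correct and essentially identical to the paper's proof.
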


\begin{proof}
Notice that for $r\geq 3$, $r\in \mathbb{N}^{+}$, we have
$$A_{r}+qA_{r-2}=A_{r-1}A=A_{r-1}A,$$
and 
$$||A_r||\leq (q+1)q^{r-1}.$$
Thus, it holds that
\begin{equation}\label{recurrencerelationnbmatrices}
    (I-At+qt^2I)\sum_{r=0}^{\infty}A_{r}t^r=(1-t^2)I,
\end{equation}
for $|t|$ small enough. 
On the other hand, we derive from (\ref{Xrqgenerating}) by taking $x=q^{-1/2}A$ that for $|t|$ small enough, 
$$(I-q^{-1/2}At+t^2I)\sum_{r=0}^{\infty}X_{r,q}(q^{-1/2}A)t^{r}=(1-q^{-1}t^2)I,$$
or equivalently
\begin{equation}\label{nbcheby}
    (I-At+qt^2I)\sum_{r=0}^{\infty}q^{r/2}X_{r,q}(q^{-1/2}A)t^{r}=(1-t^2)I.
\end{equation}
Comparing (\ref{recurrencerelationnbmatrices}) and (\ref{nbcheby}) yields the result.
\end{proof}
The explicit expressions of the polynomials $X_{r,q}$ are given below. The proofs are provided in Appendix \ref{B}. 
\begin{lemma}\label{Xrqexplicit}
    For $q\in \mathbb{N}^+$ and $r\in \mathbb{N}$, the polynomial $X_{r,q}$ defined by (\ref{Xrqgenerating}) is of degree $r$ given by
    \begin{equation}\label{eq:Xrq}
        X_{r,q}=X_r-q^{-1}X_{r-2}, 
    \end{equation}
    where 
    \begin{equation}\label{chebyshevexplicit}
    X_r(x):=\sum_{0\leq k \leq r/2}(-1)^k\binom{r-k}{k}x^{r-2k},
\end{equation}
and we use the convention that $X_{r}\equiv 0$ for $r \in \mathbb{Z}^{-}$.
\end{lemma}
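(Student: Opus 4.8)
The plan is to first identify the generating function of the auxiliary polynomials $X_r$ from (\ref{chebyshevexplicit}), and then recover $X_{r,q}$ by a single multiplicative correction. Concretely, I would first establish
\[
\sum_{r=0}^{\infty} X_r(x)\, t^r = \frac{1}{1 - xt + t^2}
\]
for $|t|$ small enough. This is the classical generating function of the (rescaled) Chebyshev polynomials of the second kind, and it admits a direct verification: writing $\tfrac{1}{1-xt+t^2} = \tfrac{1}{1-t(x-t)} = \sum_{m=0}^{\infty} t^m (x-t)^m$ and expanding each factor $(x-t)^m$ by the binomial theorem gives $\sum_{m\geq 0}\sum_{j=0}^{m}\binom{m}{j}(-1)^j x^{m-j} t^{m+j}$. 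Substituting $r=m+j$ and $k=j$ (so that $m=r-k$), one checks that the constraint $0\leq j\leq m$ is precisely $0\leq k\leq r/2$, and collecting the coefficient of $t^r$ reproduces the formula (\ref{chebyshevexplicit}) for $X_r$.

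With this identity in hand, the defining relation (\ref{Xrqgenerating}) factors cleanly. Indeed,
\[
\sum_{r=0}^{\infty} X_{r,q}(x)\, t^r = (1 - q^{-1} t^2)\sum_{r=0}^{\infty} X_r(x)\, t^r = \sum_{r=0}^{\infty} X_r(x)\, t^r - q^{-1}\sum_{r=0}^{\infty} X_r(x)\, t^{r+2}.
\]
I would then re-index the second sum by $r\mapsto r+2$ and compare the coefficient of $t^r$ on both sides. This yields $X_{r,q}=X_r-q^{-1}X_{r-2}$ for every $r\in\mathbb{N}$, where the convention $X_{-1}\equiv X_{-2}\equiv 0$ absorbs the low-order cases $r=0,1$ automatically; this is exactly (\ref{eq:Xrq}).

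Finally, the degree claim is immediate from (\ref{chebyshevexplicit}): the $k=0$ term contributes $\binom{r}{0}x^r=x^r$, so $\deg X_r=r$, and since $\deg X_{r-2}=r-2<r$, the leading monomial of $X_{r,q}=X_r-q^{-1}X_{r-2}$ is $x^r$, giving $\deg X_{r,q}=r$. I do not expect a genuine obstacle here; the only points requiring a little care are justifying the formal power-series manipulations for small $|t|$ (both series converge in a neighborhood of $t=0$ for fixed $x$, so the Cauchy-product rearrangement is valid) and tracking the index shift in the re-indexing step. An equally valid alternative, if a generating-function-free argument is preferred, would be to verify the three-term recurrence $X_r=xX_{r-1}-X_{r-2}$ directly from the binomial-coefficient identity $\binom{r-k}{k}=\binom{r-1-k}{k}+\binom{r-1-k}{k-1}$ and then invoke uniqueness of the solution with initial data $X_0=1$, $X_1=x$.
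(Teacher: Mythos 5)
Your proposal is correct and follows the same overall strategy as the paper: expand the rational generating function (\ref{Xrqgenerating}) as $(1-q^{-1}t^2)\cdot\frac{1}{1-xt+t^2}$ and read off $X_{r,q}=X_r-q^{-1}X_{r-2}$ by an index shift. The only real difference is in how you identify $\sum_r X_r(x)t^r=\frac{1}{1-xt+t^2}$ with the explicit formula (\ref{chebyshevexplicit}): the paper substitutes $x=z+z^{-1}$, factors the denominator as $(z-t)(z^{-1}-t)$, and matches against the closed form $X_r(z+z^{-1})=\frac{z^{r+1}-z^{-r-1}}{z-z^{-1}}$ inherited from the Chebyshev polynomials $U_r$, whereas you expand $\frac{1}{1-t(x-t)}$ as a geometric series and apply the binomial theorem directly. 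Your route is slightly more self-contained, since it derives the binomial-coefficient formula for $X_r$ from scratch rather than quoting the Chebyshev closed form; the paper's route is shorter if one takes that closed form as known. Your index bookkeeping ($r=m+j$, $k=j$, with $0\leq j\leq m$ becoming $0\leq k\leq r/2$), the convergence justification for small $|t|$, and the degree argument are all sound.
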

\begin{remark}
 Let us denote $Y_r:=X_{r,1}$ following Serre \cite{Serre}. For $r\geq 1$, $Y_r$ and $X_r$ are related to the \emph{Chebyshev polynomials of the first and second kind} $T_r$ and $U_r$, respectively, by a change of variables. Indeed, we have
   $$Y_r(x)=2T_r(x/2),$$
   $$X_r(x)=U_r(x/2).$$
The polynomials $Y_r$ and $X_r$ are also referred as \emph{Vieta–Lucas polynomials and Vieta–Fibonacci polynomials}, respectively, in \cite{Horadam}. 

\begin{comment}
    We mention that the Chebyshev polynomials $U_r$ is defined via 
\begin{equation}\label{eq:Ur}
    U_r(\cos\theta)=\frac{\sin (r+1)\theta}{\sin \theta}.
\end{equation}
The explicit expression is given below:
$$U_r(x)=\sum_{0\leq k \leq r/2}(-1)^k\binom{r-k}{k}(2x)^{r-2k}.$$
\end{comment}

\end{remark}

As a corollary, the Chebyshev-type polynomials are related to the number of non-backtracking walks as shown in \cite[Euqations (108)-(110)]{Serre}, see also \cite{Friedman91}.

\begin{corollary}\label{cor:trace}
    Let $G=(V,E)$ be a $(q+1)\text{-}$regular graph, then for $r \in \mathbb{N}$,
    \begin{equation}\label{1stchebgeo}
    \mathrm{Tr}(X_{r,q}(q^{-1/2}A))=q^{-r/2}f_r,
\end{equation}
\begin{equation}\label{2ndchebgeo}
\mathrm{Tr}(X_{r}(q^{-1/2}A))=q^{-r/2}\sum_{0\leq k\leq r/2}f_{r-2k},
\end{equation}
\begin{equation}\label{circuitnumber}
    \mathrm{Tr}(Y_{r}(q^{-1/2}A))= q^{-r/2}c_r-\left\{
\begin{aligned}
    &0,\ r=2k-1, k\in \mathbb{N}^+;\\
    &(q-1)q^{-r/2}|V|,\ r=2k, k\in \mathbb{N}^+.
\end{aligned} \right.
\end{equation}
\end{corollary}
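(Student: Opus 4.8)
The plan is to establish the three identities in order, each reducing to the previous one by elementary manipulations of the Chebyshev-type polynomials combined with the combinatorial meaning of the diagonal entries of $A_r$. For \eqref{1stchebgeo} I would start from Lemma \ref{ArandXrq}, which rearranges to $X_{r,q}(q^{-1/2}A)=q^{-r/2}A_r$. Taking traces gives $\mathrm{Tr}(X_{r,q}(q^{-1/2}A))=q^{-r/2}\mathrm{Tr}(A_r)$, and since by definition the diagonal entry $(A_r)_{aa}$ is the number of non-backtracking walks of length $r$ from $a$ to itself, summing over $a\in V$ yields $\mathrm{Tr}(A_r)=f_r$, which is exactly \eqref{1stchebgeo}.

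For \eqref{2ndchebgeo} the key step is to invert the relation $X_{r,q}=X_r-q^{-1}X_{r-2}$ from Lemma \ref{Xrqexplicit}. Rearranging to $X_r=X_{r,q}+q^{-1}X_{r-2}$ and iterating produces the telescoping expansion $X_r=\sum_{0\le k\le r/2}q^{-k}X_{r-2k,q}$, which can be confirmed either by checking that both sides satisfy the same two-term recursion with matching initial data, or directly from the generating function \eqref{Xrqgenerating}. Applying $\mathrm{Tr}$ and inserting \eqref{1stchebgeo} for each summand, the powers of $q$ combine as $q^{-k}\cdot q^{-(r-2k)/2}=q^{-r/2}$, giving \eqref{2ndchebgeo}.

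The identity \eqref{circuitnumber} is the delicate one, and I expect the bookkeeping of the parity-dependent correction term to be the main obstacle. Here I would use $Y_r=X_{r,1}=X_r-X_{r-2}$ (Lemma \ref{Xrqexplicit} with $q=1$) and subtract two instances of \eqref{2ndchebgeo}. Using $q^{-(r-2)/2}=q\cdot q^{-r/2}$, the overlapping tails cancel down to $\mathrm{Tr}(Y_r(q^{-1/2}A))=q^{-r/2}\bigl[f_r-(q-1)\sum_{j\ge 1,\,r-2j\ge 0}f_{r-2j}\bigr]$. It then remains to identify the bracket with $c_r$ up to the boundary term.

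The cleanest route for this last identification is generating functions. From \eqref{frcr} one computes $\sum_{r\ge 1}f_r t^r=\frac{1-t^2}{1-qt^2}\sum_{s\ge 1}c_s t^s$, whereas the bracketed quantity has generating function $\frac{1-qt^2}{1-t^2}\sum_{r\ge 0}f_r t^r$ (the inner sum over $j$ contributing the factor $\frac{t^2}{1-t^2}$). Combining these with $f_0=|V|$ gives $\frac{1-qt^2}{1-t^2}\sum_{r\ge 0}f_r t^r=\sum_{s\ge 1}c_s t^s+|V|\frac{1-qt^2}{1-t^2}$, and expanding $\frac{1-qt^2}{1-t^2}=1-(q-1)\sum_{m\ge 1}t^{2m}$ isolates $c_r$ together with the contribution of the trivial walks $|V|$. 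This residual term appears only at even powers of $t$, producing precisely $-(q-1)|V|$ when $r$ is even and nothing when $r$ is odd; multiplying through by $q^{-r/2}$ recovers \eqref{circuitnumber}.
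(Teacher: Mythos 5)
Your proposal is correct and follows essentially the same route as the paper: \eqref{1stchebgeo} from Lemma \ref{ArandXrq} together with $\mathrm{Tr}(A_r)=f_r$, then \eqref{2ndchebgeo} via the inversion $X_r=\sum_{0\leq k\leq r/2}q^{-k}X_{r-2k,q}$ of \eqref{eq:Xrq}, and finally \eqref{circuitnumber} by combining \eqref{2ndchebgeo} with \eqref{frcr}. Your generating-function bookkeeping in the last step is simply an explicit rendering of the combination the paper leaves implicit, and it checks out, including the parity-dependent correction $(q-1)|V|$ arising from the $f_0=|V|$ term.
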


\begin{comment}
    \begin{proof}
We first observe from (\ref{eq:Xrq}) that there holds
\begin{equation}\label{chebyinversehao}
    X_r=\sum_{0\leq k\leq r/2}q^{-k}X_{r-2k,q},
\end{equation}
and recall $Y_r=X_{r,1}$.
Then, 
    (\ref{1stchebgeo}) is due to (\ref{Serrechebyshevmoment}), and (\ref{2ndchebgeo}) comes from (\ref{1stchebgeo}) and (\ref{chebyinversehao}). Combining (\ref{2ndchebgeo}) with (\ref{frcr}) we get (\ref{circuitnumber}).
\end{proof}
\end{comment}

\subsection{Normalized spectral measures of graphs}

We are interested in the following probability measures related to graphs.
\begin{definition}[Spectral measure of regular finite graph]\label{spectralmeasure}
    Let $G=(V,E)$ be a finite $(q+1)$-regular graph with adjacency matrix $A$. Denote the $n$ eigenvalues of $A$ as $\lambda_{|V|}(A)  \leq \cdots \leq \lambda_2(A) \leq \lambda_1(A)$, the normalized spectral measure of $G$ is the following probability measure of bounded supported on $\mathbb{R}$
\begin{equation}
    \mu(G):=\frac{1}{|V|}\sum_{1\leq k\leq |V|}\delta_{q^{-1/2} \lambda_k(A)}.
\end{equation}
Equivalently, $\mu_G$ is defined such that for any polynomial $P\in \mathbb{R}[x]$, 
\begin{equation}
    \int_\mathbb{R} P d\mu= \frac{1}{|V|}\mathrm{Tr}( P(q^{-1/2}A)).
\end{equation}
\end{definition}
\begin{remark}
    In a probabilistic context, the spectral measure defined above is often referred as the \emph{empirical spectral distribution} (\emph{ESD}) of $q^{-1/2} A$.
\end{remark}

The spectral measure can also be defined for infinite graphs which are vertex-transitive. 
We first observe for a finite vertex-transitive graph $G$, and a polynomial $P\in \mathbb{R}[x]$ that
$$\frac{1}{|V|}\mathrm{Tr}( P(q^{-1/2}A))=\langle \mathbf{1}_{v}, P(A)  \mathbf{1}_{v} \rangle, \ \ \text{for any}\,\, v \in V_G,$$ 
since  $$\langle \mathbf{1}_{v_1}, P(A)  \mathbf{1}_{v_1} \rangle =\langle \mathbf{1}_{v_2}, P(A)  \mathbf{1}_{v_2} \rangle, \ \ \text{for any}\,\, v_1,v_2 \in V_G. $$
We can then define the spectral measure of any vertex-transitive graph (not necessarily finite) as below.

\begin{definition}[Spectral measure of vertex-transitive graphs]\label{defn:spectralmeasureinfinite}
Let $o$ be any given vertex on a vertex-transitive graph $G$. Suppose that $G$ is $(q+1)$-regular. Then the normalized spectral measure $\mu(G)$ is the unique probability measure of bounded support  on $\mathbb{R}$ such that for any polynomial $P$, $$\int_\mathbb{R} Pd\mu(G)=\langle \mathbf{1}_o, P(q^{-1/2}A)\mathbf{1}_o\rangle.$$
\end{definition}

\begin{remark}
    The existence and uniqueness of the spectral measure of an infinite vertex-transitive graph is due to the adjointness of the normalized adjacency operator $q^{-1/2}A$, see Hall \cite[Proposition 7.17]{Hall}. 
\end{remark}

Notice that the $(q+1)\text{-}$regular tree $\mathbb{T}_{q}$ is vertex transitive. Indeed, it can be viewed as the Cayley graph of $*_{q+1} \mathbb{Z}_2$.
\begin{definition}
    The normalized spectral measure $\mu_q=\mu(\mathbb{T}_{q})$ of the $(q+1)\text{-}$regular tree is referred as \emph{Kesten-McKay distribution} or \emph{Kesten-McKay law}. It satisfies:
    \begin{equation}\label{defofkm}
    \int_{\mathbb{R}}Pd\mu_{q}=\langle\mathbf{1}_o, P(q^{-1/2}A(\mathbb{T}_q))\mathbf{1}_o\rangle, \ \text{for any polynomial} \ P.
    \end{equation}
\end{definition}

\subsection{Convergence of measures in the Wasserstein Space}

In this section, we review various types of convergence of probability measures.

\begin{definition}
    Define $P(\mathbb{R})$ as the collection of all probability measures on $\mathbb{R}$ and $C_b(\mathbb{R})$ as the collection of all bounded continuous function on $\mathbb{R}$. We say $\mu_n$ \emph{converges weakly}  to $\mu$ in $P(\mathbb{R})$, denoted as 
$\mu_n \rightharpoonup \mu$, if for any $f \in C_b(\mathbb{R})$, 
\begin{equation}
    \lim_{n \rightarrow \infty }\int_\mathbb{R} f d\mu_n= \int_\mathbb{R} f d\mu.
\end{equation}
\end{definition}

The weak topology is in fact metrizable, see, for example, in Villani \cite[Corollary 6.13]{Villani}. 
\begin{lemma}\label{lemma:weakconvergence}
    There exists a metric on $P(\mathbb{R})$, such that $\mu_n \rightarrow \mu$ in this metric space if and only if $\mu_n \rightharpoonup \mu.$
\end{lemma}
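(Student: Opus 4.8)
The plan is to exhibit an explicit metric on $P(\mathbb{R})$ — the Lévy metric — and verify directly that convergence in it is equivalent to weak convergence; this is natural here because it is phrased through cumulative distribution functions, which also connect to the inverse distribution functions used later. For $\mu\in P(\mathbb{R})$ write $F_\mu(x)=\mu((-\infty,x])$ for its cumulative distribution function, a non-decreasing, right-continuous function with $\lim_{x\to-\infty}F_\mu=0$ and $\lim_{x\to+\infty}F_\mu=1$. Define
$$L(\mu,\nu):=\inf\{\epsilon>0:\ F_\mu(x-\epsilon)-\epsilon\leq F_\nu(x)\leq F_\mu(x+\epsilon)+\epsilon\ \text{for all}\ x\in\mathbb{R}\}.$$

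First I would check that $L$ is genuinely a metric. Symmetry holds because substituting $x\mapsto x\mp\epsilon$ turns the defining sandwich for $(\mu,\nu)$ into the sandwich for $(\nu,\mu)$. For the triangle inequality, if $L(\mu,\nu)<\epsilon_1$ and $L(\nu,\rho)<\epsilon_2$, chaining the two sandwich inequalities gives $F_\mu(x-\epsilon_1-\epsilon_2)-(\epsilon_1+\epsilon_2)\leq F_\rho(x)\leq F_\mu(x+\epsilon_1+\epsilon_2)+(\epsilon_1+\epsilon_2)$ for all $x$, whence $L(\mu,\rho)\leq\epsilon_1+\epsilon_2$. Finally $L(\mu,\nu)=0$ forces $F_\mu$ and $F_\nu$ to agree at every common continuity point; since each is right-continuous with at most countably many jumps, this yields $F_\mu\equiv F_\nu$ and hence $\mu=\nu$.

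Next I would establish $L(\mu_n,\mu)\to 0\iff\mu_n\rightharpoonup\mu$ by routing through the classical characterization: $\mu_n\rightharpoonup\mu$ if and only if $F_{\mu_n}(x)\to F_\mu(x)$ at every continuity point $x$ of $F_\mu$ (the Portmanteau/Helly theorem, which I would either cite or reprove by squeezing the indicator of $(-\infty,x]$ between bounded continuous functions). For the forward direction, fixing a continuity point $x$ and using the sandwich with any $\epsilon>L(\mu_n,\mu)$ gives $F_\mu(x-\epsilon)-\epsilon\leq F_{\mu_n}(x)\leq F_\mu(x+\epsilon)+\epsilon$; letting $n\to\infty$ and then $\epsilon\to 0$, continuity of $F_\mu$ at $x$ forces $F_{\mu_n}(x)\to F_\mu(x)$. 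For the reverse direction, given $\epsilon>0$ I choose finitely many continuity points $x_0<x_1<\cdots<x_m$ of $F_\mu$ with consecutive gaps below $\epsilon$, $F_\mu(x_0)<\epsilon$, and $F_\mu(x_m)>1-\epsilon$ (possible since continuity points are dense and $F_\mu$ has the correct limits at $\pm\infty$); pointwise convergence at these finitely many points together with monotonicity of all the $F_{\mu_n}$ then forces the two-sided sandwich to hold for every $x$ once $n$ is large, giving $L(\mu_n,\mu)\to 0$.

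The main obstacle is the reverse implication: upgrading pointwise convergence of the distribution functions at continuity points to the uniform, $45^\circ$-tube control encoded by $L$. The delicacy is that $F_\mu$ may have jumps and convergence is only controlled at continuity points, so one must interpolate across a finite grid of well-chosen continuity points and use monotonicity to bridge the gaps; the ability to cut off the tails where $F_\mu$ is within $\epsilon$ of $0$ or $1$ is exactly where tightness enters implicitly. All the remaining steps are elementary manipulations of monotone right-continuous functions.
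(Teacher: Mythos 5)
Your proposal is correct, but it takes a genuinely different route from the paper: the paper does not prove this lemma at all, it simply invokes the metrizability of the weak topology on $P(\mathbb{R})$ by citing Villani's Corollary 6.13 (and, in a remark, notes that such a metric is not unique, denoting any choice by $d_W$ --- the L\'evy--Prokhorov metric being the example they have in mind). You instead construct an explicit metric, the L\'evy metric on cumulative distribution functions, and verify both the metric axioms and the equivalence with weak convergence from scratch, routing through the classical fact that weak convergence on $\mathbb{R}$ is equivalent to pointwise convergence of the CDFs at continuity points of the limit. Your verification is sound; the only point to watch is in the reverse implication, where the tail estimates for $x$ outside the chosen grid naturally yield a bound like $L(\mu_n,\mu)\leq 2\epsilon$ or $3\epsilon$ rather than $\epsilon$, which of course still gives $L(\mu_n,\mu)\to 0$. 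What your approach buys is a self-contained, elementary, $\mathbb{R}$-specific argument that meshes well with the paper's later use of (inverse) distribution functions for Wasserstein distances; what the paper's citation buys is generality (metrizability on any Polish space) and brevity, which is all the lemma is used for downstream, since the later definitions of almost-sure and in-probability weak convergence only require that \emph{some} metric $d_W$ inducing the weak topology exists.
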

\begin{remark}\label{rmk:weakconvergence}
    Such a metric is not unique. We denote any such metric as $d_W$.
\end{remark}
The spectral measures of regular graphs defined in Definitions \ref{spectralmeasure} and \ref{defn:spectralmeasureinfinite} always have bounded supports, thus have finite $p$-moments for any $p \in [1,\infty)$. Motivated by this fact, we study these measures in the following \emph{Wasserstein space}.

\begin{definition}[The Wasserstein space $P_p(\mathbb{R})$]
    For any $p \in [1,\infty)$, define $P_p(\mathbb{R})$ as the collection of all probability measures on $\mathbb{R}$ with finite $p$ moment, i.e.,
    $$P_p(\mathbb{R})=\{\mu \in P(\mathbb{R}): \int_\mathbb{R} |x|^p d\mu(x) <\infty\}.$$ For $p=\infty$, define $P_{\infty}(\mathbb{R})$ to be the collection of all probability measures with bounded support.
\end{definition}
The Wasserstein space $P_p(\mathbb{R})$ is equipped with the following metric $W_p$.
\begin{definition}[The Wasserstein distance]
For any $\mu, \nu \in P_p(\mathbb{R})$, $p \in [1,\infty)$, define the Wasserstein distance as
\begin{equation}
     W_p(\mu,\nu)=\inf_{\gamma \in \Pi (\mu,\nu)} \left(\int_{\mathbb{R}^2} |x-y|^p d\gamma(x,y)\right)^{1/p},
\end{equation}
and 
\begin{equation}
     W_{\infty}(\mu,\nu)=\inf_{\gamma \in \Pi (\mu,\nu)} ||x-y||_{L^{\infty}(\mathbb{R}^2, \gamma)},
\end{equation}
where $\Pi (\mu,\nu)$ denotes the collection of all couplings between $\mu$ and $\nu$.
\end{definition}

We have the following more explicit formula for Wasserstein distance, see, e.g., \cite[Proposition 2.17]{Santambrogio}.
\begin{lemma}\label{closedformwasserstein}
For any $\mu, \nu \in P_p(\mathbb{R})$, $p \in [1,\infty]$,
    \begin{equation}
    W_p(\mu,\nu)=||F^{-1}_{\mu}-F^{-1}_{\nu}||_{L^p[0,1]},
\end{equation}
where $F^{-1}_{\mu}$ is the \emph{generalized inverse distribution function (IDF)}, i.e., 
$$F^{-1}_{\mu}(x)=\inf \{y\ |\ \mu((-\infty, y]) \geq x\}.$$
\end{lemma}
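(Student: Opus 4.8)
The plan is to show that the \emph{monotone coupling} built from the two inverse distribution functions realizes the infimum defining $W_p$, so that the transport problem on $\mathbb{R}$ collapses to a one-dimensional rearrangement computation. The starting observation is the standard fact about generalized inverses: writing $F_\mu(y)=\mu((-\infty,y])$ and letting $\mathrm{Leb}$ denote Lebesgue measure on $[0,1]$, one has the pushforward identity $(F^{-1}_{\mu})_{\#}\mathrm{Leb}=\mu$. This follows from the Galois-type equivalence $F^{-1}_{\mu}(x)\le y \iff x\le F_\mu(y)$ (valid since $F_\mu$ is non-decreasing and right-continuous), which gives $\mathrm{Leb}\{x\in[0,1]:F^{-1}_{\mu}(x)\le y\}=F_\mu(y)$, i.e. the pushforward has cumulative distribution function $F_\mu$. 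I would record this together with the elementary monotonicity and left-continuity of $F^{-1}_{\mu}$.

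For the upper bound, consider the map $T(t)=\bigl(F^{-1}_{\mu}(t),F^{-1}_{\nu}(t)\bigr)$ from $[0,1]$ to $\mathbb{R}^2$ and set $\gamma_0=T_{\#}\mathrm{Leb}$. By the pushforward identity its two marginals are $\mu$ and $\nu$, so $\gamma_0\in\Pi(\mu,\nu)$ is an admissible coupling, and
\[
W_p(\mu,\nu)^p\le\int_{\mathbb{R}^2}|x-y|^p\,d\gamma_0(x,y)=\int_0^1\bigl|F^{-1}_{\mu}(t)-F^{-1}_{\nu}(t)\bigr|^p\,dt=\bigl\|F^{-1}_{\mu}-F^{-1}_{\nu}\bigr\|_{L^p[0,1]}^p.
\]
This yields one inequality with essentially no work; the entire content lies in the reverse inequality, namely showing that no coupling beats the monotone one.

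For the lower bound, the key structural input is that for $p\ge 1$ the cost $c(x,y)=|x-y|^p$ satisfies the \emph{submodularity (Monge) condition}: for $x\le x'$ and $y\le y'$,
\[
c(x,y)+c(x',y')\le c(x,y')+c(x',y).
\]
This is immediate from convexity of $t\mapsto|t|^p$, since the pairs $\{x-y,\,x'-y'\}$ and $\{x-y',\,x'-y\}$ have the same sum while the latter is more spread out, and a convex function has larger sum on the more spread pair. Given any $\gamma\in\Pi(\mu,\nu)$, this inequality shows that ``uncrossing'' two mass elements sitting at $(x,y')$ and $(x',y)$ into $(x,y)$ and $(x',y')$ never increases the cost, so the infimum is attained on couplings concentrated on a non-decreasing set; since the marginals are fixed, any such monotone coupling must coincide with $\gamma_0$, forcing $\int c\,d\gamma\ge\int c\,d\gamma_0$ and hence equality. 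I expect this uncrossing/monotonicity step to be the main obstacle: making ``uncross infinitely many infinitesimal mass elements'' rigorous requires either the machinery of $c$-cyclical monotonicity of optimal plans combined with the submodularity condition, or a direct rearrangement argument minimizing $\int_0^1|F^{-1}_{\mu}-\sigma(F^{-1}_{\nu})|^p$ over measure-preserving maps $\sigma$ via the Hardy--Littlewood rearrangement inequality; this is precisely the point at which I would invoke the one-dimensional optimal transport theorem rather than grinding through the estimate by hand.

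Finally, the case $p=\infty$ is handled analogously: the same coupling $\gamma_0$ gives $W_\infty(\mu,\nu)\le\|F^{-1}_{\mu}-F^{-1}_{\nu}\|_{L^\infty[0,1]}$, and the matching lower bound follows either by passing to the limit $p\to\infty$ in the finite-$p$ identity or by applying the monotonicity argument to the limiting (still non-crossing) structure.
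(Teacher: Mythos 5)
The paper does not prove this lemma itself but simply cites Santambrogio's Proposition 2.17, and your argument is precisely the standard proof given there: the monotone coupling $\gamma_0=(F^{-1}_{\mu},F^{-1}_{\nu})_{\#}\mathrm{Leb}$ yields the upper bound, and submodularity of the convex cost $|x-y|^p$ together with $c$-cyclical monotonicity of optimal plans yields the lower bound, with the $p=\infty$ case recovered by letting $p\to\infty$. Your outline is correct; the only caution is that at the uncrossing step you should lean on the $c$-cyclical monotonicity (or Hardy--Littlewood rearrangement) route you describe rather than ``the one-dimensional optimal transport theorem,'' since the latter is exactly the statement being proved.
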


\begin{remark}
    Notice that $W_p(\mu,\nu) \leq W_q(\mu,\nu)$ if $p\leq q$ and 
    $$\lim_{p\rightarrow \infty} W_{p}(\mu,\nu)=W_{\infty}(\mu,\nu).$$
\end{remark}

\begin{remark} Let $\mu_G$ be the normalized spectral measure of a finite graph $G$. Then
 the generalized inverse distribution function (IDF) of $\mu_G$ is actually a simple function on $[0,1]$, such that $$F^{-1}_{\mu_G}(x)=q^{-1/2} \lambda_{|V|-k}(A),\ \ x \in \left(\frac{k}{m}, \frac{k+1}{m}\right].$$
\end{remark}

For $p\in [1,\infty)$, the Wasserstein space $P_p(\mathbb{R})$ has the following property, see, e.g., \cite[Definition 6.8, Theorem 6.9 and Theorem 6.18]{Villani}.
\begin{lemma}\label{criterionwasserstein}
    For $p \in [1,\infty)$, the metric space $(P_p(\mathbb{R}), W_p)$ is complete and separable. Moreover, $\mu_n \rightarrow \mu$ in $P_p(\mathbb{R})$ if and only if $\mu_n \rightharpoonup \mu$ and 
    $$\int_\mathbb{R} |x|^p d\mu_n(x) \rightarrow \int_\mathbb{R} |x|^p d\mu(x).$$
\end{lemma}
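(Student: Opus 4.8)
The plan is to transport the entire statement to the Lebesgue space $L^p([0,1])$ via the isometry supplied by Lemma \ref{closedformwasserstein}. Define $\Phi \colon P_p(\mathbb{R}) \to L^p([0,1])$ by $\Phi(\mu) = F^{-1}_\mu$. Lemma \ref{closedformwasserstein} says precisely that $\|\Phi(\mu) - \Phi(\nu)\|_{L^p[0,1]} = W_p(\mu,\nu)$, so $\Phi$ is an isometric embedding. The first task is to identify the image: I claim $\Phi(P_p(\mathbb{R}))$ equals the set $\mathcal{C}$ of all non-decreasing, left-continuous functions lying in $L^p([0,1])$. Indeed, every generalized inverse distribution function is non-decreasing and left-continuous by construction, and belongs to $L^p$ because $\int_0^1 |F^{-1}_\mu(t)|^p\,dt = \int_{\mathbb{R}} |x|^p\,d\mu(x) < \infty$; conversely, any $g \in \mathcal{C}$ is itself the IDF of the push-forward measure $g_{\#}(\mathrm{Leb}|_{[0,1]})$, so $\Phi$ is a bijection onto $\mathcal{C}$.

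Given this, completeness and separability are inherited from $L^p([0,1])$ once I show $\mathcal{C}$ is closed. If $g_n \in \mathcal{C}$ and $g_n \to g$ in $L^p$, I pass to a subsequence converging a.e.; monotonicity is preserved under pointwise limits on the full-measure set where convergence holds, so after modifying $g$ on a null set and taking its left-continuous representative one has $g \in \mathcal{C}$. Thus $\mathcal{C}$ is a closed subset of the complete, separable space $L^p([0,1])$, hence itself complete and separable; pulling back through the isometry $\Phi$ transfers both properties to $(P_p(\mathbb{R}), W_p)$.

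For the convergence characterization I again work with quantile functions, using the classical equivalence that $\mu_n \rightharpoonup \mu$ holds if and only if $F^{-1}_{\mu_n} \to F^{-1}_\mu$ at every continuity point of $F^{-1}_\mu$, i.e. Lebesgue-a.e. on $(0,1)$ (this follows from convergence of the distribution functions at continuity points together with the standard transfer of such convergence to the inverses). For the forward implication, if $W_p(\mu_n,\mu) \to 0$ then $\|\Phi(\mu_n) - \Phi(\mu)\|_{L^p} \to 0$; since $W_1 \le W_p$, every subsequence has a further subsequence along which the quantile functions converge a.e., whence that subsequence converges weakly to $\mu$, and the metrizability of weak convergence (Lemma \ref{lemma:weakconvergence}) upgrades this to $\mu_n \rightharpoonup \mu$. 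Moment convergence follows from $\big(\int |x|^p\,d\mu_n\big)^{1/p} = W_p(\mu_n, \delta_0)$ together with the reverse triangle inequality. For the converse, weak convergence gives $F^{-1}_{\mu_n} \to F^{-1}_\mu$ a.e., while $p$-moment convergence gives $\|F^{-1}_{\mu_n}\|_{L^p} \to \|F^{-1}_\mu\|_{L^p}$; I then invoke the classical fact that a.e. convergence together with convergence of the $L^p$ norms forces strong $L^p$ convergence, proved by applying Fatou's lemma to the non-negative functions $2^{p-1}\big(|F^{-1}_{\mu_n}|^p + |F^{-1}_\mu|^p\big) - |F^{-1}_{\mu_n} - F^{-1}_\mu|^p$. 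This yields $\|\Phi(\mu_n) - \Phi(\mu)\|_{L^p} \to 0$, i.e. $W_p(\mu_n,\mu) \to 0$.

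I expect the main obstacle to be the converse implication, concretely the step that a.e. convergence plus $L^p$-norm convergence implies strong $L^p$ convergence: naive domination fails since there need be no $L^p$ majorant, so the Fatou bookkeeping above (a Brezis--Lieb-type argument) is essential. A secondary technical point requiring care is the precise equivalence between weak convergence of $\mu_n$ and a.e. convergence of the inverse distribution functions, where one must handle the at-most-countable set of discontinuities of the limiting quantile function.
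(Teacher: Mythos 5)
Your proof is correct, but it is worth noting that the paper does not actually prove this lemma at all: it simply cites Villani's general theory (completeness and separability of $(P_p,W_p)$ over a Polish space, and the characterization of $W_p$-convergence as weak convergence plus convergence of $p$-th moments), which is established there by tightness and Prokhorov-type arguments valid in arbitrary Polish spaces. You instead exploit the one-dimensional structure through the quantile isometry $\mu\mapsto F^{-1}_\mu$ of Lemma \ref{closedformwasserstein}, reducing everything to the closed convex cone of non-decreasing left-continuous functions in $L^p([0,1])$. This buys a genuinely elementary and self-contained proof: completeness and separability are inherited from $L^p([0,1])$, and the convergence equivalence reduces to the classical fact that almost-everywhere convergence together with convergence of $L^p$-norms implies strong $L^p$ convergence, which your Fatou (Brezis--Lieb-type) computation handles correctly. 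The two standard facts you lean on --- that weak convergence of measures on $\mathbb{R}$ is equivalent to pointwise convergence of quantile functions at continuity points (hence a.e., by monotonicity of $F^{-1}_\mu$), and that $F^{-1}_\mu$ pushes Lebesgue measure forward to $\mu$ --- are exactly the right ingredients and are correctly deployed; the only cost of your approach is that it is specific to measures on the real line, whereas the cited results hold in general Polish spaces, but for the purposes of this paper that restriction is immaterial.
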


\begin{remark}
    Lemma \ref{criterionwasserstein} does not hold for $p=\infty$.
\end{remark}
\begin{definition}
    Suppose that $\mu$ and $\mu_n$ are probability measures on $\mathbb{R}$ with finite $p$ moment for any $p\in [1,\infty)$. We say $\mu_n$ \emph{converges in moments} to $\mu$, if for any $P \in \mathbb{R}[x]$, $$\int_\mathbb{R} Pd\mu_n \rightarrow \int_\mathbb{R} Pd\mu, \,\,\text{as}\,\,n\to \infty.$$
\end{definition}

The moment method is built upon the following key fact, see, e.g.,  Fleermann-Kirsch \cite[Theorem 3.5]{Fleermann-Kirsch}.
\begin{lemma}[Convergence in moments implies weak convergence]\label{momentimpliesweak}
    Suppose that $\mu$ and $\mu_n$ are probability measures on $\mathbb{R}$ with finite $p$ moment for any $p\in [1,\infty)$. If $\mu_n$ converges in moments to $\mu$ and $\mu$ is uniquely determined by its moments, then $\mu_n$ converges weakly to $\mu$.
\end{lemma}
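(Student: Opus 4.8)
The plan is to combine a tightness argument with Prokhorov's theorem and the moment-determinacy hypothesis, packaged through the standard subsequence principle. The only genuinely delicate point is upgrading weak convergence along a subsequence into convergence of \emph{all} moments, since the monomials $x^k$ are unbounded and hence not directly accessible from the definition of weak convergence.

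First I would establish tightness of the family $\{\mu_n\}$. Because $\mu_n$ converges in moments to $\mu$ and $x^2 \in \mathbb{R}[x]$, the second moments converge, so $M:=\sup_n \int_{\mathbb{R}} x^2\, d\mu_n < \infty$. Markov's inequality then gives $\mu_n(\{|x|>R\}) \leq M/R^2$ for every $n$ and every $R>0$. Thus for each $\varepsilon>0$ there is a compact interval $[-R_\varepsilon,R_\varepsilon]$ on which every $\mu_n$ places mass at least $1-\varepsilon$, which is precisely tightness. By Prokhorov's theorem, every subsequence of $\{\mu_n\}$ admits a further subsequence $\mu_{n_j}$ converging weakly to some probability measure $\nu$.

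The crux is to show $\nu=\mu$, for which I would prove that $\nu$ shares every moment of $\mu$. Fix $k \in \mathbb{N}$ and let $\phi_R(x)=x^k\chi(x/R)$, where $\chi$ is a fixed continuous cutoff with $\chi \equiv 1$ on $[-1,1]$, $\chi \equiv 0$ outside $[-2,2]$, and $0\leq \chi \leq 1$; then $\phi_R \in C_b(\mathbb{R})$, so weak convergence gives $\int \phi_R\, d\mu_{n_j}\to \int \phi_R\, d\nu$ as $j\to\infty$ for each fixed $R$. The error from replacing $x^k$ by $\phi_R$ is controlled by the tail estimate $\int_{|x|>R} |x|^k\, d\mu_{n_j} \leq R^{-k}\int_{\mathbb{R}} x^{2k}\, d\mu_{n_j}$, and since $x^{2k}$ is a genuine polynomial the sequence $\int x^{2k}\,d\mu_{n_j}$ converges, hence is bounded uniformly in $j$; the same bound holds for $\nu$ after passing to the limit. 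An $\varepsilon/3$ argument (let $j\to\infty$ for fixed $R$, then $R\to\infty$) yields $\int_{\mathbb{R}} x^k\, d\nu = \lim_j \int_{\mathbb{R}} x^k\, d\mu_{n_j} = \int_{\mathbb{R}} x^k\, d\mu$, the last equality being convergence in moments. Therefore $\nu$ and $\mu$ have identical moments of all orders, and the hypothesis that $\mu$ is determined by its moments forces $\nu=\mu$.

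Finally, since every subsequence of $\{\mu_n\}$ has a further subsequence converging weakly to the \emph{same} limit $\mu$, the Urysohn subsequence principle—valid because the weak topology is metrizable by Lemma \ref{lemma:weakconvergence}, with metric $d_W$—implies that the full sequence satisfies $\mu_n \rightharpoonup \mu$. I expect the main obstacle to be exactly the third step, namely transferring moment convergence across the weak limit via the uniform tail bound; the remaining ingredients (tightness, Prokhorov compactness, and the subsequence principle) are soft and essentially automatic once the moment transfer is in place.
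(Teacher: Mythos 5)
Your proof is correct: tightness from the second moments, Prokhorov compactness, identification of any subsequential weak limit's moments via the truncation $\phi_R$ and the uniform bound on the $2k$-th moments, moment-determinacy to pin down the limit, and the subsequence principle in the metrizable weak topology. The paper does not prove this lemma itself but cites Fleermann--Kirsch, and your argument is essentially the standard one given there, so no further comparison is needed.
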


\begin{remark}
    Any probability measure of bounded support is uniquely determined by its moments, see Fleermann-Kirsch \cite[Corollary 3.4]{Fleermann-Kirsch}. Thus, in order to prove weak convergence of $\mu_n$ to a probability measure $\mu$ of bounded support, it suffices to prove the convergence in moments. Moreover, if the support of $\mu_n$ is uniformly bounded, by choosing a suitable cut-off function, one can see that $\mu_n$ converges weakly to $\mu$ is equivalent to $\mu_n$ converges in moments to $\mu$.
\end{remark}

\begin{lemma}\label{equivalentconvergence}
    Suppose that $\mu$ and $\mu_n$ are probability measures on $\mathbb{R}$ with finite $p$ moment for any $p\in [1,\infty)$, and $\mu$ is uniquely determined by its moments. Then $\mu_n$ converges in moments to $\mu$ if and only if it holds for any $p\in [1,\infty)$ that $W_p(\mu_n, \mu) \rightarrow 0\ \text{as}\ n\rightarrow \infty$. 
\end{lemma}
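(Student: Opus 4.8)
The plan is to deduce both implications from a single classical fact: weak convergence together with a uniform moment bound upgrades to convergence of integrals of continuous functions of controlled polynomial growth. Throughout I would freely invoke Lemma \ref{criterionwasserstein}, which for each fixed $p\in[1,\infty)$ characterizes $W_p(\mu_n,\mu)\to 0$ as the conjunction of $\mu_n\rightharpoonup\mu$ and $\int_\mathbb{R}|x|^p\,d\mu_n\to\int_\mathbb{R}|x|^p\,d\mu$ (this applies since $\mu,\mu_n\in P_p(\mathbb{R})$ by the finite-moment hypothesis). The technical heart, which I would isolate first, is the following claim: if $\mu_n\rightharpoonup\mu$ and $\sup_n\int_\mathbb{R}|x|^{s}\,d\mu_n<\infty$ for some $s>0$, then for every $t\in[0,s)$ and every continuous $g$ with $\sup_x|g(x)|/(1+|x|^{t})<\infty$ one has $\int_\mathbb{R} g\,d\mu_n\to\int_\mathbb{R} g\,d\mu$.

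I would prove this claim by a truncation argument, and I expect it to be the main obstacle. First, the moment bound forces $\{g\}$ to be uniformly integrable against $\{\mu_n\}$: on $\{|x|>R\}$ with $R\ge 1$ one estimates $|g(x)|\le C(1+|x|^{t})\le 2C|x|^{s}R^{t-s}$, so the tail integrals are at most a constant times $R^{t-s}\to 0$ uniformly in $n$; the same bound, together with the lower-semicontinuity inequality $\int_\mathbb{R}|x|^{s}\,d\mu\le\liminf_n\int_\mathbb{R}|x|^{s}\,d\mu_n<\infty$ from the portmanteau theorem, controls the tail for $\mu$ as well. Next, cutting $g$ off by a continuous bump $\phi_R$ equal to $1$ on $[-R,R]$ and supported in $[-R-1,R+1]$, the product $g\phi_R$ is bounded and continuous, so $\int_\mathbb{R} g\phi_R\,d\mu_n\to\int_\mathbb{R} g\phi_R\,d\mu$ by weak convergence, while the discarded pieces are uniformly small by uniform integrability. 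Letting $n\to\infty$ first and then $R\to\infty$ gives the claim.

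For the forward implication, assume $\mu_n$ converges in moments to $\mu$. Since $\mu$ is determined by its moments, Lemma \ref{momentimpliesweak} yields $\mu_n\rightharpoonup\mu$. Fix $p\in[1,\infty)$ and choose an even integer $2m>p$; convergence in moments gives $\int_\mathbb{R} x^{2m}\,d\mu_n\to\int_\mathbb{R} x^{2m}\,d\mu<\infty$, hence $\sup_n\int_\mathbb{R}|x|^{2m}\,d\mu_n<\infty$. Applying the claim with $s=2m$, $t=p$, and $g(x)=|x|^{p}$ gives $\int_\mathbb{R}|x|^{p}\,d\mu_n\to\int_\mathbb{R}|x|^{p}\,d\mu$, and then Lemma \ref{criterionwasserstein} gives $W_p(\mu_n,\mu)\to 0$. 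As $p$ was arbitrary, this is the desired conclusion.

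For the converse, assume $W_p(\mu_n,\mu)\to 0$ for all $p\in[1,\infty)$. Taking $p=1$ in Lemma \ref{criterionwasserstein} gives $\mu_n\rightharpoonup\mu$, and for every $p$ the absolute moments $\int_\mathbb{R}|x|^{p}\,d\mu_n$ converge, hence are bounded. Fix an integer $k\ge 0$ and apply the claim with $s=k+1$, $t=k$, and $g(x)=x^{k}$; this yields $\int_\mathbb{R} x^{k}\,d\mu_n\to\int_\mathbb{R} x^{k}\,d\mu$, and linearity then gives $\int_\mathbb{R} P\,d\mu_n\to\int_\mathbb{R} P\,d\mu$ for every polynomial $P$, i.e. convergence in moments. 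The subtle point here, and the reason the uniform-integrability claim is genuinely needed rather than a direct reading-off, is that for odd $k$ the integrand $x^{k}$ is signed: the convergence of the absolute moments $\int_\mathbb{R}|x|^{p}\,d\mu_n$ supplied by Wasserstein convergence does not by itself control $\int_\mathbb{R} x^{k}\,d\mu_n$, so one must combine weak convergence with the moment bound through the truncation argument.
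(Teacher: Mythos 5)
Your proof is correct and takes essentially the same route as the paper, whose entire proof is the one-line citation ``This follows from Lemma \ref{criterionwasserstein} and Lemma \ref{momentimpliesweak}.'' The uniform-integrability/truncation claim you isolate is precisely the standard bridge that citation leaves implicit --- it is what lets you pass from convergence of even integer moments to convergence of $\int_{\mathbb{R}}|x|^{p}\,d\mu_n$ for arbitrary $p\in[1,\infty)$ in the forward direction, and recover the signed odd moments from the absolute ones in the converse --- and you execute it correctly.
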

\begin{proof}
    This follows from Lemma \ref{criterionwasserstein} and Lemma \ref{momentimpliesweak}.
\end{proof}

For a sequence of random regular graphs, each picked from a series of graph ensembles independently, we regard the spectral measures associated as a sequence of independent random variables taking values in the metric space $(P(\mathbb{R}), d_W)$ or $(P_p(\mathbb{R}), W_p)$.

\begin{definition}\label{defn:convergence}
    Let $G_n=(V_n, E_n)$ be a sequence of independent random regular graphs with normalized spectral measure $\mu_n$. Let $\mu$ be a deterministic probability measure. We define the following three convergence:
    \begin{itemize}
        \item [(i)] $\mu_n$ \emph{converges weakly to $\mu$  a.s.} if $$d_W(\mu_n, \mu) \rightarrow 0\ \text{as}\ n\rightarrow \infty\ a.s..$$
\item[(ii)] $\mu_n$ \emph{converges weakly to $\mu$  in probability} if for any $\varepsilon>0$, $$\mathbf{P}(d_W(\mu_n, \mu)>\varepsilon) \rightarrow 0\ \text{as}\ n\rightarrow \infty.$$
\item[(iii)] Suppose $\mu$ be uniquely determined by its moments. $\mu_n$ \emph{converges in moments to $\mu$  a.s.} if for any $p\in [1,\infty)$,  $$W_p(\mu_n, \mu) \rightarrow 0\ \text{as}\ n\rightarrow \infty\ a.s..$$
\end{itemize}
\end{definition}

The Definition \ref{defn:convergence} (i) is independent of the choices of the metric $d_W$, due to Lemma \ref{lemma:weakconvergence} and Remark \ref{rmk:weakconvergence}. 
 In fact, so is the Definition \ref{defn:convergence} (ii), due to the following lemma, see, e.g., Fleermann-Kirsch \cite[Theorem 2.25]{Fleermann-Kirsch}.
\begin{lemma}
Let $G_n=(V_n, E_n)$ be a sequence of independent random regular graphs. The corresponding normalized spectral measure $\mu_n$ converges weakly to $\mu$  in probability if and only if for any 
$f_\mathbb{R} \in C_b(\mathbb{R})$, the real-valued random variable $\int_\mathbb{R} f d\mu_n$ converges in probability to $\int f d\mu$.
\end{lemma}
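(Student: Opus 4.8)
The plan is to fix an arbitrary metric $d_W$ metrizing weak convergence (Lemma \ref{lemma:weakconvergence}; recall by Remark \ref{rmk:weakconvergence} that $d_W$ is not canonical) and prove both implications for this fixed $d_W$. Since the characterization in statement (ii) makes no reference to $d_W$, establishing the equivalence for every such choice will simultaneously yield the asserted independence of Definition \ref{defn:convergence}(ii) on the metric. I note at the outset that the independence hypothesis on the graphs $G_n$ plays no role here: the argument only uses that each $\mu_n$ is a random element of the metric space $(P(\mathbb{R}), d_W)$ and that $\mu$ is deterministic.

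For the forward implication, I would argue that for each fixed $f \in C_b(\mathbb{R})$ the evaluation functional $\Phi_f \colon \nu \mapsto \int_\mathbb{R} f \, d\nu$ is continuous on $(P(\mathbb{R}), d_W)$: if $d_W(\nu_j, \nu) \to 0$ then $\nu_j \rightharpoonup \nu$ by the defining property of $d_W$, whence $\int f \, d\nu_j \to \int f \, d\nu$, and sequential continuity equals continuity on a metric space. Continuity of $\Phi_f$ at the deterministic point $\mu$ then transfers convergence in probability directly: given $\varepsilon > 0$, pick $\delta > 0$ with $d_W(\nu, \mu) < \delta \Rightarrow |\Phi_f(\nu) - \Phi_f(\mu)| < \varepsilon$, so that $\mathbf{P}(|\int_\mathbb{R} f\, d\mu_n - \int_\mathbb{R} f\, d\mu| \ge \varepsilon) \le \mathbf{P}(d_W(\mu_n, \mu) \ge \delta) \to 0$. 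This is just the continuous mapping theorem for convergence in probability toward a constant.

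The substantive direction is the converse. Here I would invoke the standard subsequence criterion: $d_W(\mu_n, \mu) \to 0$ in probability if and only if every subsequence admits a further subsequence along which $d_W(\mu_n, \mu) \to 0$ almost surely. The key external input is the existence of a countable family $\{f_k\}_{k \ge 1} \subset C_b(\mathbb{R})$ that is \emph{convergence-determining}, i.e. $\int f_k \, d\nu_j \to \int f_k \, d\nu$ for all $k$ forces $\nu_j \rightharpoonup \nu$; such a family exists since $P(\mathbb{R})$ with the weak topology is a separable metrizable space. Given any subsequence, I would apply the hypothesis $\int f_k \, d\mu_n \to \int f_k \, d\mu$ in probability together with a diagonal extraction over $k$ to produce a further subsequence $(\mu_{n_l})$ along which, almost surely, $\int f_k \, d\mu_{n_l} \to \int f_k \, d\mu$ simultaneously for every $k$ (a countable intersection of almost-sure events remains almost sure). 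On that event the convergence-determining property gives $\mu_{n_l} \rightharpoonup \mu$, i.e. $d_W(\mu_{n_l}, \mu) \to 0$; the subsequence criterion then closes the argument.

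The main obstacle is precisely the convergence-determining family: it must control \emph{all} of $C_b(\mathbb{R})$, not merely separate measures, so that pointwise control of countably many integrals upgrades to genuine weak convergence, and its existence is where separability of the weak topology enters. A more quantitative alternative, which avoids subsequences, is to exploit Remark \ref{rmk:weakconvergence} and simply take $d_W(\mu,\nu) = \sum_{k} 2^{-k}\min\{1, |\int_\mathbb{R} f_k\, d\mu - \int_\mathbb{R} f_k\, d\nu|\}$ built from such a family; then $d_W(\mu_n,\mu)$ is dominated by $\sum_{k \le K} 2^{-k}\min\{1, |\int_\mathbb{R} f_k\, d\mu_n - \int_\mathbb{R} f_k\, d\mu|\} + 2^{-K}$, and a union bound over the finitely many $k \le K$ yields $d_W(\mu_n,\mu) \to 0$ in probability directly. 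Either route reduces the lemma to two facts: that $d_W$ metrizes weak convergence, and that weak convergence is captured by a countable set of test functions.
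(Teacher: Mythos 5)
Your proof is correct, and it is worth noting that the paper itself does not prove this lemma at all: it is quoted from the literature (Fleermann--Kirsch, Theorem 2.25), so your argument supplies a proof the paper omits. Both of your routes are sound. The forward direction is indeed just the continuous mapping theorem at the deterministic point $\mu$, using that $\nu\mapsto\int_\mathbb{R} f\,d\nu$ is continuous for the weak topology. For the converse, the two ingredients you isolate are exactly the right ones: a countable convergence-determining family $\{f_k\}\subset C_b(\mathbb{R})$ (which exists because $P(\mathbb{R})$ with the weak topology is separable and metrizable; concretely one can take piecewise-linear approximations of indicators of half-lines with rational parameters), and either the subsequence criterion with a diagonal extraction or the explicit metric $\sum_k 2^{-k}\min\{1,|\int_\mathbb{R} f_k\,d\mu-\int_\mathbb{R} f_k\,d\nu|\}$ with a union bound. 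One small point of bookkeeping: the quantitative alternative proves convergence in probability only for that particular metric, so to cover an arbitrary metrization $d_W$ (as Remark \ref{rmk:weakconvergence} requires) you still need the subsequence route, since along an almost-surely weakly convergent sub-subsequence one gets $d_W(\mu_{n_l},\mu)\to 0$ for \emph{every} admissible $d_W$ simultaneously; your primary argument already does this correctly. Your observation that independence of the $G_n$ is irrelevant is also accurate.
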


\subsection{Kesten-McKay Distribution and Orthogonal Relations}\label{section:3}

The Kesten-McKay distribution is closely related to the following semicircle distribution.
\begin{definition}
     The \emph{Wigner semicircle distribution} (also known as  semicircle law) $\mu_{\infty}$ is defined to be 
$$d\mu_{\infty}(x)=\frac{1}{2\pi}\sqrt{4-x^2}\mathbf{1}_{|x|\leq 2}\ dx.$$
\end{definition}
In the number theoretic literature, the Wigner semicircle distribution is also known as Sato-Tate distribution.

The following Theorem \ref{thm:McKayLaw} gives the explicit formula of the Kesten-McKay distribution and also the orthogonal relations of the Chebyshev polynomials, see \cite[Section 2.2]{Serre}, \cite[Lemma 2.5]{Sodin} and references therein. 
We provide a proof in Appendix \ref{C}.
\begin{theorem}\label{thm:McKayLaw}
    The Kesten-McKay distribution $\mu_{q}$ for $q=1$ is given by 
    \begin{equation}\label{mu1explicit}
d\mu_{1}(x)=\frac{1}{\pi}\frac{1}{\sqrt{4-x^2}}\mathbf{1}_{|x|\leq 2}\ dx,
\end{equation}
and for $q\geq 2$,
    \begin{equation}\label{muqexplicit}
    d\mu_{q}(x)=\frac{1}{2\pi}\frac{(q+1)\sqrt{4-x^2}}{ (q^{-1/2}+q^{1/2})^2-x^2}\mathbf{1}_{|x|\leq 2}\ dx.
\end{equation} 
The polynomials $\{X_{r,q}\}_{r=0}^{\infty}$ defined in (\ref{eq:Xrq}) 
satisfy 
\begin{equation}\label{orthogonalrelation}
\int_\mathbb{R} X_{n,q} X_{m,q}d\mu_{q}=\left\{
\begin{aligned}
    &0, \ m\neq n;\\
    &1, \ m=n=0;\\
    &1+q^{-1}, \ m=n\neq 0.
\end{aligned} \right.
\end{equation}
and hence form a complete orthogonal basis of $L^2(\mathbb{R},\mu_{q})$.
\end{theorem}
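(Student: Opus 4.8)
The plan is to establish the two independent assertions of Theorem \ref{thm:McKayLaw} separately: first the explicit density formulas \eqref{mu1explicit} and \eqref{muqexplicit}, and then the orthogonality relations \eqref{orthogonalrelation}, from which completeness follows formally. For the density, I would exploit the generating function \eqref{Xrqgenerating} together with the defining property \eqref{defofkm} of the Kesten-McKay distribution. Summing \eqref{Serrechebyshevmoment} against $t^r$ shows that the resolvent-type generating series $\sum_r q^{-r/2} \langle \mathbf{1}_o, A_r \mathbf{1}_o \rangle t^r$ encodes the return probabilities of non-backtracking walks on the tree $\mathbb{T}_q$. Since $\mathbb{T}_q$ has no nontrivial closed non-backtracking walks at all, $\langle \mathbf{1}_o, A_r \mathbf{1}_o \rangle = \delta_{r,0}$, so $\int_\mathbb{R} X_{r,q}\, d\mu_q = \delta_{r,0}$ for every $r$. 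This is the key structural input; I would then recover the density by computing the Stieltjes transform $m(z) = \int (z-x)^{-1} d\mu_q(x)$ through the same generating function, identifying it with the closed form coming from \eqref{Xrqgenerating} evaluated at the appropriate argument, and reading off the density via the Stieltjes inversion formula. The cases $q=1$ and $q \geq 2$ differ only in whether the denominator $(q^{-1/2}+q^{1/2})^2 - x^2$ has zeros inside $[-2,2]$, which is why the $q=1$ formula degenerates to the arcsine-type law \eqref{mu1explicit}.

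For the orthogonality relations \eqref{orthogonalrelation}, my approach is to reduce everything to the single-index moments $\int_\mathbb{R} X_{r,q}\, d\mu_q = \delta_{r,0}$ already obtained. The idea is to expand the product $X_{n,q} X_{m,q}$ as a linear combination $\sum_k a_k^{(n,m)} X_{k,q}$ in the basis $\{X_{r,q}\}$, so that integrating term by term against $\mu_q$ collapses the entire sum to $a_0^{(n,m)}$, the coefficient of $X_{0,q} \equiv 1$. It then remains to compute this constant term. The cleanest route is via the generating function: multiplying two copies of \eqref{Xrqgenerating} in variables $s$ and $t$ and integrating against $d\mu_q(x)$ gives
\begin{equation}
\sum_{n,m \geq 0} \left( \int_\mathbb{R} X_{n,q} X_{m,q}\, d\mu_q \right) s^n t^m = \int_\mathbb{R} \frac{(1-q^{-1}s^2)(1-q^{-1}t^2)}{(1-xs+s^2)(1-xt+t^2)}\, d\mu_q(x).
\end{equation}
Using the explicit density \eqref{muqexplicit} (or the $q=1$ formula), I would evaluate this double integral by partial fractions in $x$ and a residue or contour computation, then extract the coefficient of $s^n t^m$ and check it matches the right-hand side of \eqref{orthogonalrelation}. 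Alternatively, and perhaps more transparently, one can use the three-term recurrence satisfied by the $X_{r,q}$ (encoded in \eqref{Xrqgenerating}) to run an induction: the recurrence lets one express $x X_{m,q}$ in terms of $X_{m+1,q}, X_{m,q}, X_{m-1,q}$, and combined with self-adjointness of multiplication by $x$ on $L^2(\mu_q)$, this yields the standard orthogonal-polynomial argument forcing $\int X_{n,q} X_{m,q}\, d\mu_q = 0$ for $n \neq m$ and pinning down the diagonal values.

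The diagonal normalization deserves separate care: the value $1+q^{-1}$ for $m = n \neq 0$ (versus $1$ at $n=0$) reflects the $q^{-1}$ correction built into \eqref{eq:Xrq}, namely $X_{r,q} = X_r - q^{-1} X_{r-2}$. I would verify this either by direct evaluation of the residue computation at the diagonal, or by relating $\int X_{n,q}^2\, d\mu_q$ to the inner products of the $X_r$ (second-kind Chebyshev) polynomials, which are orthogonal with respect to the semicircle law. Finally, completeness of $\{X_{r,q}\}$ in $L^2(\mathbb{R}, \mu_q)$ is automatic once orthogonality is established, since $X_{r,q}$ has exact degree $r$ (Lemma \ref{Xrqexplicit}), so the family spans all polynomials, and polynomials are dense in $L^2$ of any compactly supported measure.

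I expect the main obstacle to be the residue or contour evaluation of the double generating-function integral, specifically handling the interaction between the semicircle factor $\sqrt{4-x^2}$ and the two quadratic denominators $1 - xs + s^2$ and $1 - xt + t^2$ while correctly tracking the extra Kesten-McKay pole at $x^2 = (q^{-1/2}+q^{1/2})^2$. Keeping the bookkeeping straight so that the off-diagonal terms genuinely vanish and the diagonal produces exactly $1+q^{-1}$ — rather than an artifact of a misplaced residue — is the delicate part; the recurrence-based induction is the safer fallback if the contour computation becomes unwieldy.
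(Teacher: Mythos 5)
Your proposal is correct in substance and rests on the same structural input as the paper — namely that $\mathbb{T}_q$ has no nontrivial closed non-backtracking walks, so $\int_\mathbb{R} X_{r,q}\,d\mu_q=\delta_{r,0}$ — but both halves of your argument then diverge from the paper's route. For the density, the paper does not pass through the Stieltjes transform: it instead computes the moments $\int_\mathbb{R} X_r\,d\mu_q=q^{-r/2}$ ($r$ even; $0$ for $r$ odd) against the semicircle-orthonormal polynomials $X_r$ via \eqref{eq:Xrq}, writes $d\mu_q=\bigl(\sum_{r\geq 0}q^{-r}X_{2r}\bigr)d\mu_\infty$ by uniqueness, and sums that series in closed form using \eqref{chebygeneratingfunction} to land directly on \eqref{muqexplicit}; this sidesteps the Stieltjes inversion and the pole bookkeeping you flag as the delicate part. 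For the orthogonality, the paper's argument is a one-line geometric computation that your proposal does not anticipate: by Lemma \ref{ArandXrq}, $\int_\mathbb{R} X_{n,q}X_{m,q}\,d\mu_q=q^{-(m+n)/2}\langle A_m(\mathbb{T}_q)\mathbf{1}_o, A_n(\mathbb{T}_q)\mathbf{1}_o\rangle$, and on a tree a non-backtracking walk of length $n$ from $o$ ends at distance exactly $n$ from $o$, so the vectors $A_n\mathbf{1}_o$ for distinct $n$ have disjoint supports and $\langle A_m\mathbf{1}_o,A_n\mathbf{1}_o\rangle=(q+1)q^{n-1}\delta_{mn}$ for $m,n\geq 1$, giving $1+q^{-1}$ on the diagonal immediately. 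Your two proposed substitutes — the double generating-function integral evaluated by residues, and the three-term recurrence $xX_{r,q}=X_{r+1,q}+X_{r-1,q}$ ($r\geq 2$) combined with self-adjointness of multiplication by $x$ — are both viable (the recurrence induction in particular closes cleanly once you check the base cases $\int x^2\,d\mu_q=1+q^{-1}$ and the anomalous constant in $X_{2,q}-xX_{1,q}+X_{0,q}=-q^{-1}$), but they are considerably more labor than the paper's tree computation. The completeness argument via density of polynomials is the same in both.
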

Figure \ref{figure:KestenMcKay} illustrates the density functions of Kesten-McKay distribution $\mu_1$, $\mu_3$ and $\mu_{20}$.

\begin{remark}
   After a shift of variable, the distribution $\mu_1$ becomes the arcsine law with density function
    \begin{equation}
\rho(x)=\frac{1}{\pi}\frac{1}{\sqrt{x(4-x)}}\mathbf{1}_{0\leq x\leq 4}.
\end{equation}
\end{remark}

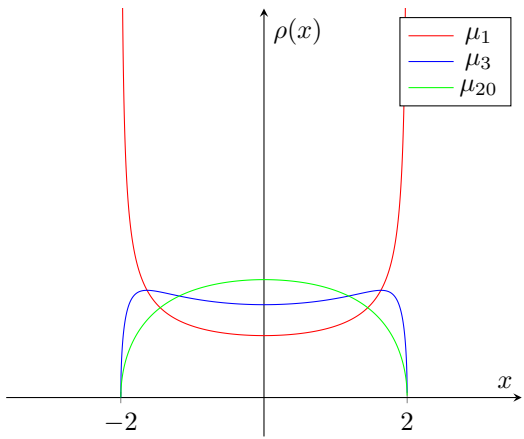
\begin{figure}
    \centering
   \begin{tikzpicture}
\begin{axis}[xmin=-3.6,xmax=3.6,ymin=-0.1,ymax=1,
    axis lines = middle,
    ytick={0},
    yticklabels={},
    xtick={-2, 0, 2},
    xlabel = \(x\),
    ylabel = {\(\rho(x)\)},
]

\addplot[
    domain=-1.9999:1.9999,
    samples=1000,
    color=red]{(4-x^2)^(-0.5)/(pi)};
\addlegendentry{\(\mu_{1}\)};
\addplot[
    domain=-2:2,
    samples=1000,
    color=blue]{12*(4-x^2)^(0.5)/(2*pi*(16-3*x*x))};
\addlegendentry{\(\mu_{3}\)};
\addplot[
    domain=-2:2,
    samples=1000,
    color=green]{20*21*(4-x^2)^(0.5)/(2*pi*(21*21-20*x*x))};
\addlegendentry{\(\mu_{20}\)}

\end{axis}
\end{tikzpicture}
    \caption{Density function of the Kesten-McKay distribution when $q=1,3,20$}
    \label{figure:KestenMcKay}
\end{figure}

\begin{remark}\label{betaJacobi}
    One can check that for all $q>1$, $q\in \mathbb{R}$, $\mu_q$ defined as in (\ref{muqexplicit}) is a probability measure on $\mathbb{R}$, and $\{X_{r,q}\}_{r=0}^{\infty}$ defined as in (\ref{eq:Xrq}) form a complete orthogonal basis of $L^2(\mathbb{R},\mu_{q})$. Notice that $\mu_{q}$ converges weakly to $\mu_{1}$ and $\mu_{\infty}$ as $q$ tends to $1^{+}$ and $\infty$. For fixed $r$, $X_{r,q}$ tends to the polynomials $Y_r$ and $X_r$ as $q$ tends to  $1^{+}$ and $\infty$, respectively. Recall that $Y_r$ and $X_r$ are obtained from the Chebyshev polynomials of the first and second kind via a change of variables.
    %Such probability measures are closely related to the limiting distribution of $\beta\text{-}Jacobi$ ensembles (Compare these with Proposition 1.1 of \cite{Dumitriu-Paquette}).
\end{remark}
\section{Random regular graphs and random lifts}\label{section:4}
Given a $(q+1)$-regular graph $G$. Let $G_n$ be a random $n$-lift of $G$. 
In this section, we show the normalized spectral measures of $G_n$ converge weakly in probability to Kesten-McKay distribution. For that purpose, we need a unitary-colored generalization of Lemma \ref{ArandXrq} and a criterion due to Serre \cite{Serre} and Sodin \cite{Sodin}. 
\subsection{A Criterion}
The following criterion as noticed by \cite[Lemma 2.8]{Sodin} and also \cite[Theorem 10]{Serre} characterize the convergence of spectral measures of random regular graphs via counting non-backtracking walks or circles. We provide a proof in Appendix \ref{D} for convenience.

\begin{theorem}\label{kestenmain}
    For a series of independent $(q+1)$-regular random graphs $G_n=(V_n, E_n)$, the following are equivalent as $n\rightarrow \infty$:
    \begin{itemize}
        \item [(i)] The normalized spectral measure $\mu_{n}=\mu(G_n)$ converges almost surely to the Kesten-McKay distribution $\mu_q$;
    \item[(ii)] For any $r \geq 1$, $$\frac{f_{r}(G_n)}{|V_n|} \rightarrow 0\  \text{almost surely};$$
    \item[(iii)] For any $r \geq 1$, $$\frac{Z_{r}(G_n)}{|V_n|} \rightarrow 0\  \text{almost surely}.$$
    \end{itemize}
    The theorem holds still if every "almost surely" is replaced by "in probability".
\end{theorem}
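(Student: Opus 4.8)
The plan is to funnel all three statements through the trace identity $\int_{\mathbb{R}} X_{r,q}\,d\mu_n = |V_n|^{-1}\mathrm{Tr}(X_{r,q}(q^{-1/2}A(G_n))) = q^{-r/2} f_r(G_n)/|V_n|$ of Corollary \ref{cor:trace}, combined with two structural facts: $X_{r,q}$ has degree exactly $r$, so $\{X_{r,q}\}_{0\le r\le R}$ is a basis of the polynomials of degree $\le R$; and $\int_{\mathbb{R}} X_{r,q}\,d\mu_q = 0$ for $r\ge 1$ while $X_{0,q}\equiv 1$, from Theorem \ref{thm:McKayLaw}. Together these say that condition (ii) is exactly the assertion that $\int X_{r,q}\,d\mu_n \to \int X_{r,q}\,d\mu_q$ (a.s.) for every $r\ge 0$, which by linearity is the same as $\mu_n$ converging to $\mu_q$ in moments.

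For (i) $\Leftrightarrow$ (ii) I would first note that $(q+1)$-regularity forces $\|A(G_n)\|\le q+1$, so every $\mu_n$, and also $\mu_q$, is supported in the fixed compact $K=[-(q+1)q^{-1/2},(q+1)q^{-1/2}]$. For measures with uniformly bounded support, weak convergence and convergence in moments coincide (the remark following Lemma \ref{momentimpliesweak}, via a cutoff equal to $1$ on $K$, together with the fact that $\mu_q$ is determined by its moments). Hence on a fixed realization, $d_W(\mu_n,\mu_q)\to 0$ iff $\mu_n\to\mu_q$ in moments iff $f_r(G_n)/|V_n|\to 0$ for all $r\ge 1$ by the previous paragraph. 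The only bookkeeping point is that ``for each $r$, a.s.'' and ``a.s., for all $r$'' agree, which follows by intersecting countably many full-measure events.

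For (ii) $\Leftrightarrow$ (iii) I would use only the deterministic sandwich of Lemma \ref{NBWandcircle}, namely $2rZ_r(G_n)\le f_r(G_n)\le (q+1)^2 q^{2r-2}\sum_{1\le k\le r} kZ_k(G_n)$. The left inequality gives $Z_r/|V_n|\le f_r/(2r|V_n|)$, yielding (ii) $\Rightarrow$ (iii); the right inequality dominates $f_r/|V_n|$ by a fixed finite linear combination of the $Z_k/|V_n|$ with $k\le r$, yielding (iii) $\Rightarrow$ (ii). Being pointwise inequalities per realization, these transfer directly to almost-sure convergence, and, through the event inclusions $\{Z_r/|V_n|>\varepsilon\}\subseteq\{f_r/|V_n|>2r\varepsilon\}$ and $\{f_r/|V_n|>\varepsilon\}\subseteq\bigcup_{1\le k\le r}\{kZ_k/|V_n|>\varepsilon/(r(q+1)^2 q^{2r-2})\}$, to convergence in probability as well.

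Finally, for the in-probability form of (i) $\Leftrightarrow$ (ii) I would replace the per-realization moment argument by its probabilistic analogue: by the characterization stated just before the theorem, $\mu_n\rightharpoonup\mu_q$ in probability iff $\int f\,d\mu_n\to\int f\,d\mu_q$ in probability for each $f\in C_b(\mathbb{R})$; the common compact support $K$ lets me swap $C_b(\mathbb{R})$ for $\mathbb{R}[x]$ and then for $\{X_{r,q}\}$. The point requiring care, and essentially the only nonroutine step, is the direction weak $\Rightarrow$ moments: it genuinely relies on the uniform compactness of the supports, and in the in-probability setting one must make the Weierstrass approximation on $K$ uniform so that the errors $\int(f-P)\,d\mu_n$ are controlled simultaneously in $n$, which is precisely what the fixed interval $K$ furnishes.
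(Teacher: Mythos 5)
Your proposal is correct and follows essentially the same route as the paper's proof in Appendix \ref{D}: the trace identity $\int X_{r,q}\,d\mu_n = q^{-r/2}f_r(G_n)/|V_n|$ together with the vanishing of the Chebyshev moments of $\mu_q$, the uniform compact support $[-(q^{1/2}+q^{-1/2}),\,q^{1/2}+q^{-1/2}]$ to pass between weak convergence and convergence in moments (with the cutoff/Stone--Weierstrass argument for the in-probability version), and the sandwich inequality \eqref{frzr} for (ii) $\Leftrightarrow$ (iii). No gaps.
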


\begin{comment}
    Let $G(n,q+1)$ be the collection of all $(q+1)\text{-}$regular simple graphs on $n$ vertices with uniform probability measure.  Then we have the following convergence to Kesten-McKay distribution due to the lack of circles. The result is originally due to McKay \cite{McKay}.
\begin{corollary}\label{cor:McKay}
    Let $G_n \in G(n,q+1)$ with adjacency matrix $A_n$, then the normalized spectral measures of $G_n$ converge weakly to the Kesten-McKay distribution $\mu_q$ almost surely.
\end{corollary}
\begin{proof}
    First notice that $Z_1(G_n)=Z_2(G_n)\equiv0$. For $k\geq 3$, it is known that as $n\rightarrow \infty$, the random variable $Z_k(G_n)$ converges in distribution to a Poisson distributed random variable with parameter $\lambda_k:=\frac{1}{2k}q^k$ and $$\mathbb{E}Z_k^2(G_n) \rightarrow \lambda_k^2+\lambda_k,$$ see \cite{Bollobas} and \cite{Wormald}. In this way for any $k\geq 1$,
    $$\mathbb{E} \sum_{n=1}^{\infty} \frac{Z^2_k(G_n)}{n^2} < \infty.$$
    By Borel–Cantelli lemma, $\frac{Z_{k}(G_n)}{n} \rightarrow 0$ a.s. and the corollary holds.
\end{proof}
\end{comment}

\subsection{Colored non-backtracking matrices}

 Let $G=(V,E)$ be an undirected $(q+1)\text{-}$ regular graph with directed edge set $\vec{E}$. Let $\mathcal{G}$ be a subgroup of $U(N)$ with $N\in \mathbb{N}^+$. A $\mathcal{G}$-color on $G$ is an assignment $\sigma: \Vec{E} \rightarrow \mathcal{G}$, such that $\sigma(\overline{e})=\sigma(e)^{-1}$(the symmetric condition). We treat the group elements of $\mathcal{G}$ as matrices of size $N \times N$. Define $A^{\sigma}$ to be the block matrix of size $N|V|\times N|V|$ such that for any $i,j\in V$
$$(A^{\sigma})_{ij}= \sum_{e\in \vec{E}:\, o(e)=i, t(e)=j}\sigma_{e}.$$ 
If there is no such edge $e\in \vec{E}$ with $o(e)=i$ and $t(e)=j$, we use the convention that $(A^\sigma)_{ij}$ equals the null matrix. Note that the matrix $A^\sigma$ is self-adjoint. Such a pair $(G,\sigma)$ has been referred to in the literature as voltage graph, gain graph or connection graph, see, e.g., \cite{KMY} and the references therein.   

For any walk $\gamma=(\{v_i\}_{i=0}^{n},\{e_i\}_{i=1}^{n})$ with $n\geq 1$, we define its $\mathcal{G}$-color via multiplication on the right as $$\sigma_{\gamma}=\prod_{i=1}^{n}\sigma_{e_i}.$$ We now present our definition of $\mathcal{G}$-colored non-backtracking matrices.
\begin{definition} Let $G=(V,E)$ be a graph associated with a $\mathcal{G}$-color $\sigma$, where $\mathcal{G}$ is a subgroup of $U(N)$.
    The \emph{$\mathcal{G}$-colored non-backtracking matrix $A_r^\sigma$ of length $r$} is defined as a block matrix such that $$(A_{r}^{\sigma})_{ij}=\sum_{\gamma \in P_{ij}(r)}\sigma_{\gamma},\,\,\text{for}\,\,r\geq 1,$$ where $P_{ij}(r)$ stands for the collection of all non-backtracking walks from $i$ to $j$ of length $r$ and $A_{0}^{\sigma}=\mathrm{I}$ is the identity matrix of size $N|V|\times N|V|$.
\end{definition}
 Note that each $(A^{\sigma}_r)_{ij}$ is a %self-adjoint 
 polynomial of $N\times N$ matrices of degree $r$ with coefficients depending only on the graph structure of $G$. 
The following lemma, which connects the colored non-backtracking matrices with Chebyshev-type polynomials, is a colored version of Lemma \ref{ArandXrq}.
\begin{lemma}\label{lemma:color} Let $G=(V,E)$ be a graph associated with a $\mathcal{G}$-color $\sigma$, where $\mathcal{G}$ is a subgroup of $U(N)$. For any $q\in \mathbb{N}^+$ and $r\in \mathbb{N}$, we have
    \begin{equation}\label{Serretrace}
    A^{\sigma}_r=q^{r/2}X_{r,q}(q^{-1/2}A^{\sigma}).
\end{equation}
\end{lemma}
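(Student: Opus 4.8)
The plan is to transcribe the warm-up proof of Lemma \ref{ArandXrq} into the block-matrix setting, the only genuinely new point being that unitarity of the colors makes the combinatorics of backtracking steps identical to the uncolored case. Concretely, I would first establish the same three-term recurrence for the colored matrices: $A_0^\sigma=\mathrm{I}$, $A_1^\sigma=A^\sigma$, $A_2^\sigma=(A^\sigma)^2-(q+1)\mathrm{I}$, and, for $r\geq 3$,
\begin{equation*}
A_{r-1}^\sigma A^\sigma=A_r^\sigma+qA_{r-2}^\sigma .
\end{equation*}
Once this recurrence and the base cases are in hand, the remainder of the argument is word-for-word the generating-function comparison used for Lemma \ref{ArandXrq}.

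The heart of the matter is the combinatorial bookkeeping for the recurrence, where the colors must telescope correctly. Expanding the $(i,j)$ block of $A_{r-1}^\sigma A^\sigma$ sums $\sigma_\gamma\sigma_e$ over all pairs consisting of a non-backtracking walk $\gamma$ of length $r-1$ from $i$ to some $k$ followed by a single edge $e$ with $o(e)=k$, $t(e)=j$; since color is defined by right multiplication, this is exactly $\sigma_{\gamma e}$, the color of the concatenated length-$r$ walk. Such a walk either is itself non-backtracking, contributing to $(A_r^\sigma)_{ij}$, or it backtracks at the final junction, i.e.\ $e=\overline{e_{\mathrm{last}}}$ where $e_{\mathrm{last}}$ is the last edge of $\gamma$. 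In the latter case, writing $\gamma=\gamma'' e_{\mathrm{last}}$ with $\gamma''$ a non-backtracking walk of length $r-2$ from $i$ to $j$, the symmetric condition $\sigma(\overline{e})=\sigma(e)^{-1}$ gives $\sigma_{\gamma''}\,\sigma_{e_{\mathrm{last}}}\,\sigma_{e_{\mathrm{last}}}^{-1}=\sigma_{\gamma''}$, so the color collapses onto $\gamma''$. For $r\geq 3$ there are exactly $q$ non-backtracking extensions $e_{\mathrm{last}}$ of each $\gamma''$ (degree $q+1$ minus the one reverse edge), so the backtracking contributions sum to $q(A_{r-2}^\sigma)_{ij}$, yielding the recurrence; the base case $A_2^\sigma$ is the analogous count with $q+1$ backtracking choices. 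I expect this step, verifying that the unitary colors cancel precisely along backtracking pairs and that the multiplicities match, to be the main (and essentially only) obstacle.

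With the recurrence secured, I would close as follows. Because each $\sigma_\gamma$ is unitary, $\|(A_r^\sigma)_{ij}\|\leq \#P_{ij}(r)$, and a Schur-test bound on the self-adjoint block matrix $A_r^\sigma$ gives $\|A_r^\sigma\|\leq(q+1)q^{r-1}$, so $\sum_{r\geq 0}A_r^\sigma t^r$ converges in operator norm for $|t|$ small. Multiplying this series by $(\mathrm{I}-A^\sigma t+qt^2\mathrm{I})$ and collecting powers of $t$ using the base cases and recurrence (and noting $A^\sigma A_{r-1}^\sigma=A_{r-1}^\sigma A^\sigma$ by the mirror-image count, so the manipulation is unambiguous) leaves only $\mathrm{I}-t^2\mathrm{I}$, giving
\begin{equation*}
(\mathrm{I}-A^\sigma t+qt^2\mathrm{I})\sum_{r=0}^{\infty}A_r^\sigma t^r=(1-t^2)\mathrm{I}.
\end{equation*}
Substituting $x=q^{-1/2}A^\sigma$ into the generating identity \eqref{Xrqgenerating} and replacing $t$ by $q^{1/2}t$ shows that $\sum_{r}q^{r/2}X_{r,q}(q^{-1/2}A^\sigma)t^r$ satisfies the identical functional equation; since $\mathrm{I}-A^\sigma t+qt^2\mathrm{I}$ is invertible for $|t|$ small, the two power series coincide, and matching coefficients of $t^r$ yields \eqref{Serretrace}.
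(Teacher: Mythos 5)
Your proposal is correct and follows essentially the same route as the paper: establish the three-term recurrence $A^{\sigma}_{r}+qA^{\sigma}_{r-2}=A^{\sigma}_{r-1}A^{\sigma}$ with the base cases $A_1^\sigma=A^\sigma$, $A_2^\sigma=(A^\sigma)^2-(q+1)\mathrm{I}$, and then identify $A_r^\sigma$ with $q^{r/2}X_{r,q}(q^{-1/2}A^\sigma)$ via the generating function \eqref{Xrqgenerating}, exactly as in the warm-up proof of Lemma \ref{ArandXrq}. The only difference is cosmetic: the paper concludes by noting both sequences satisfy the same recurrence and initial data (an induction), whereas you compare the two power series explicitly; your careful verification that the unitary colors telescope along backtracking pairs is the step the paper asserts ``by definition,'' and you have it right.
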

\begin{proof}
By definition, the following recurrence relation holds for the colored non-backtracking matrix $A_r^\sigma$:
$$A_1^{\sigma}=A^{\sigma},$$
$$A_2^{\sigma}=(A^{\sigma})^2-(q+1)I,$$
and for $r\geq 3$, $r\in \mathbb{N}^+$,
$$A^{\sigma}_{r}+qA^{\sigma}_{r-2}=A^{\sigma}_{r-1}A^{\sigma}=A^{\sigma}A^{\sigma}_{r-1}.$$
We define a new matrix via the polynomials $X_{r,q}$ as below:
\[\overline{A^{\sigma}_r}:=q^{r/2}X_{r,q}(q^{-1/2}A^{\sigma}).\]
Due to the generating function (\ref{Xrqgenerating}) of $X_{r,q}$, the above recurrence relation holds still for the matrix $\overline{A_r^{\sigma}}$.
Therefore, we have $\overline{A_{r}^{\sigma}}=A_{r}^{\sigma}$. This completes the proof.
\end{proof}

We define the normalized spectral measure of the pair $(G,\sigma)$ as follows.
\begin{definition}\label{spectralmeasurecolored}
    Let $G=(V,E)$ be a finite $(q+1)$-regular graph with a $\mathcal{G}$-color $\sigma$, with $\mathcal{G}$ being a subgroup of $U(N)$. Denote the $N|V|$ eigenvalues of $A^\sigma$ as $\lambda_{N|V|}(A^\sigma) \leq \cdots \leq \lambda_2(A^\sigma) \leq \lambda_1(A^\sigma)$, the normalized spectral measure of $(G,\sigma)$ is the following probability measure on $\mathbb{R}$
\begin{equation}
    \mu(G,\sigma):=\frac{1}{N|V|}\sum_{1\leq k\leq N|V|}\delta_{q^{-1/2} \lambda_k(A^\sigma)}.
\end{equation}
\end{definition}

\begin{theorem}\label{thm:colormckay}
Let $\{G_n\}_{n=0}^\infty$ be a sequence of $(q+1)$-regular graphs. Let $\mathcal{G}_{n}$ be a subgroup of $U(N_n)$ for all $n$. Then the normalized spectral measure $\mu(G_n)$ converges weakly to the Kesten-McKay distribution $\mu_q$ if and only if the measure $\mu(G_n,\sigma_n)$ converges weakly to $\mu_q$ for any $\mathcal{G}_{n}$-colors $\{\sigma_n\}_{n=0}^\infty$.
\end{theorem}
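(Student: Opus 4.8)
The plan is to reduce the weak convergence of both $\mu(G_n)$ and $\mu(G_n,\sigma_n)$ to $\mu_q$ to a single family of scalar conditions, namely $\int_\mathbb{R} X_{r,q}\,d\mu_n \to 0$ for every $r\geq 1$, and then to compare the colored and uncolored versions of these integrals through a trace estimate. Since each $X_{r,q}$ is monic of degree $r$ (Lemma \ref{Xrqexplicit}), the family $\{X_{r,q}\}_{r\geq 0}$ is a linear basis of $\mathbb{R}[x]$, and by Theorem \ref{thm:McKayLaw} it is orthogonal against $\mu_q$ with $\int X_{0,q}\,d\mu_q=1$ and $\int X_{r,q}\,d\mu_q=0$ for $r\geq 1$. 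All the measures involved have uniformly bounded support: for a unit vector $v=(v_i)_{i\in V}$ one has $\langle v,A^\sigma v\rangle=\sum_{e\in\vec E}\langle v_{o(e)},\sigma_e v_{t(e)}\rangle$, and the unitarity of the colors together with the degree bound gives $|\langle v,A^\sigma v\rangle|\leq (q+1)\|v\|^2$, so the spectrum of $A^\sigma$ (and of $A$) lies in $[-(q+1),q+1]$. Consequently, as a preliminary observation I would record that weak convergence to $\mu_q$ is equivalent to convergence in moments (via Lemma \ref{momentimpliesweak} and a cutoff argument for the uniformly compactly supported family), which, after expanding any polynomial in the $X_{r,q}$ basis, is equivalent to $\int X_{r,q}\,d\mu_n\to 0$ for all $r\geq 1$; this equivalence applies verbatim to $\mu(G_n)$ and to $\mu(G_n,\sigma_n)$.

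The "if" implication is immediate. Taking $\sigma_n\equiv I_{N_n}$, the identity element of $\mathcal{G}_n$ (which lies in every subgroup), gives $A^{\sigma_n}=A(G_n)\otimes I_{N_n}$, whose eigenvalues are those of $A(G_n)$ each repeated $N_n$ times; hence $\mu(G_n,\sigma_n)=\mu(G_n)$. Thus convergence for all colors specializes to convergence of $\mu(G_n)$.

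For the main implication I would establish the pointwise trace inequality
\begin{equation*}
\Bigl|\int_\mathbb{R} X_{r,q}\,d\mu(G,\sigma)\Bigr|\;\leq\;\int_\mathbb{R} X_{r,q}\,d\mu(G),\qquad r\geq 1,
\end{equation*}
valid for every $(q+1)$-regular graph $G$ and every $\mathcal{G}$-color $\sigma$ with $\mathcal{G}\subseteq U(N)$. By Lemma \ref{lemma:color} we have $X_{r,q}(q^{-1/2}A^\sigma)=q^{-r/2}A_r^\sigma$, so $\int X_{r,q}\,d\mu(G,\sigma)=\frac{q^{-r/2}}{N|V|}\mathrm{Tr}(A_r^\sigma)$. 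Expanding the block trace, $\mathrm{Tr}(A_r^\sigma)=\sum_{i\in V}\mathrm{Tr}\bigl((A_r^\sigma)_{ii}\bigr)=\sum_{\gamma}\mathrm{Tr}(\sigma_\gamma)$, where $\gamma$ ranges over the $f_r(G)$ closed non-backtracking walks of length $r$. Each $\sigma_\gamma$ is a product of elements of $U(N)$, hence unitary, so $|\mathrm{Tr}(\sigma_\gamma)|\leq N$, and the triangle inequality yields $|\mathrm{Tr}(A_r^\sigma)|\leq N f_r(G)$. Dividing by $N|V|$ and comparing with $\int X_{r,q}\,d\mu(G)=q^{-r/2}f_r(G)/|V|$ from Corollary \ref{cor:trace} gives the inequality. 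Therefore, if $\mu(G_n)\to\mu_q$ weakly then $\int X_{r,q}\,d\mu(G_n)\to 0$ for each $r\geq 1$, whence $\int X_{r,q}\,d\mu(G_n,\sigma_n)\to 0$ for every $r\geq 1$ and every choice of colors, and the preliminary equivalence gives $\mu(G_n,\sigma_n)\to\mu_q$.

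The substance of the argument is concentrated in the displayed inequality; the only point requiring care is the passage from the operator identity of Lemma \ref{lemma:color} to the combinatorial trace sum over closed non-backtracking walks and the uniform bound $|\mathrm{Tr}(\sigma_\gamma)|\leq N$. This is precisely the mechanism by which unitary colors can only shrink (in absolute value) the $X_{r,q}$-moments and hence cannot obstruct convergence to $\mu_q$. I do not expect a genuine obstacle: the colored non-backtracking matrix $A_r^\sigma$ is defined so that Lemma \ref{lemma:color} transfers the full uncolored machinery, and unitarity of the colors supplies the needed bound for free.
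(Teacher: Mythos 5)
Your proposal is correct and follows essentially the same route as the paper: Lemma \ref{lemma:color} converts the colored $X_{r,q}$-moment into a block trace over closed non-backtracking walks, unitarity gives $|\mathrm{Tr}(\sigma_\gamma)|\leq N$ and hence the bound by $q^{-r/2}f_r(G_n)/|V_n|$, and the equivalence of weak convergence with the vanishing of these moments finishes the argument. The only (harmless) addition is that you make the ``if'' direction explicit via the identity color, which the paper leaves as an unstated triviality.
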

\begin{proof}
We only need to show the only if part.
Let $G_n=(V_n,E_n)$ and $\sigma_n$ be a $\mathcal{G}_{n}$-color with $\mathcal{G}_{n}$ being a subgroup of $U(N_n)$.
    We first observe from Lemma \ref{lemma:color} that
    \begin{equation*}
        \int_\mathbb{R} X_{r,q}d\mu(G_n,\sigma_n)=\frac{q^{-r/2}}{N_n|V_n|}\sum_{i\in V_n}\mathrm{Tr}((A_r^{\sigma_n})_{ii}).
    \end{equation*}
    Since every unitary matrix has norm one, we have $$\sum_{i\in V_n}\left|\mathrm{Tr}((A_r^{\sigma_n})_{ii})\right|\leq \sum_{i\in V_n}N_n\Vert(A_r^{\sigma_n})_{ii}\Vert\leq N_{n}\sum_{i\in V_n}\sum_{\gamma \in P_{ii}(r)}||\sigma(\gamma)||\leq N_nf_r(G_n).$$
    Therefore, we obtain
    \[\left|\int_\mathbb{R} X_{r,q}d\mu(G_n,\sigma_n)\right|\leq q^{-r/2}\frac{f_r(G_n)}{|V_n|}.\]
    By Theorem \ref{kestenmain}, the measure $\mu(G_n)$ converges weakly to $\mu_q$ if and only if $f_r(G_n)/|V_n|$ tends to $0$ as $n$ tends to $\infty$, for any $r\geq 1$. The latter property implies that $\mu(G_n, \sigma_n)$ converges to $\mu_q$ in moments. By Lemma \ref{momentimpliesweak}, this tells that $\mu(G_n, \sigma_n)$ converges weakly to $\mu_q$. 
\end{proof}

\subsection{Random Lifts}

We now prove convergence to the Kesten-McKay distribution of normalized spectral measures of random lifts by lack of closed non-backtracking walks. Part of our proof is motivated by Bordenave-Collins \cite{Bordenave-Collins}. Let $G=(V,E)$ be an undirected $(q+1)\text{-}$ regular graph.
We assign to $G$ a $\mathcal{G}$-color $\sigma$ where $\mathcal{G}=\pi_N$ is 
the permutation group on $N$ elements. Note that each assignment of a $\pi_N$-color on $G$ represents an $N$-lift (covering) of $G$, denoted by $G^{\sigma}$, and the matrix  $A^{\sigma}$ is exactly the adjacency matrix of the lift graph $G^{\sigma}$. 

We consider the following model of random $N$-lifts of the graph $G$, which is the ensemble of all $\{G^\sigma: \,\,\sigma \,\,\text{is a $\pi_N$-color} \}$ with uniform probability measure.

\begin{theorem}\label{McKayrandomlift}
 Let $G$ be a finite $(q+1)$-regular graph and $G_n$ be the random $n$-lift of $G$. Then the normalized spectral measure of $G_n$ converges weakly to the Kesten-McKay distribution $\mu_q$ in probability.
\end{theorem}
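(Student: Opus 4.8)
The plan is to verify criterion (ii) of Theorem \ref{kestenmain} in probability for the random lift, namely that $f_r(G_n)/|V_n|\to 0$ in probability for every $r\geq 1$, and then invoke the ``in probability'' part of that theorem. Writing $G_n=G^{\sigma_n}$ for a uniformly random $\pi_n$-color $\sigma_n$, we have $|V_n|=n|V|$ and $A(G_n)=A^{\sigma_n}$, so that $\mu(G_n)=\mu(G,\sigma_n)$ in the sense of Definition \ref{spectralmeasurecolored}. The starting point is the colored Chebyshev identity of Lemma \ref{lemma:color}, which upon taking normalized traces gives
\begin{equation*}
\int_\mathbb{R} X_{r,q}\, d\mu(G,\sigma_n)=\frac{q^{-r/2}}{n|V|}\sum_{i\in V}\mathrm{Tr}\big((A_r^{\sigma_n})_{ii}\big)=\frac{q^{-r/2}}{n|V|}\sum_{i\in V}\sum_{\gamma\in P_{ii}(r)}\mathrm{Tr}\big(\sigma_n(\gamma)\big).
\end{equation*}
Since each $\sigma_n(\gamma)$ is a permutation matrix, $\mathrm{Tr}(\sigma_n(\gamma))$ is its number of fixed points, and summing over $i$ and $\gamma$ recovers exactly $f_r(G_n)$, the number of closed non-backtracking walks of length $r$ in the lift. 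Thus it suffices to control $\mathbb{E}[f_r(G_n)]$.

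Next I would estimate this expectation walk by walk. The crucial structural observation is that the number of pairs $(i,\gamma)$ with $i\in V$ and $\gamma\in P_{ii}(r)$ equals $f_r(G)$, a constant depending only on the fixed base graph $G$ and on $r$, hence independent of $n$. By linearity,
\begin{equation*}
\mathbb{E}[f_r(G_n)]=\sum_{i\in V}\sum_{\gamma\in P_{ii}(r)}\mathbb{E}\big[\mathrm{Tr}(\sigma_n(\gamma))\big].
\end{equation*}
It therefore suffices to show each summand $\mathbb{E}[\mathrm{Tr}(\sigma_n(\gamma))]$ is bounded uniformly in $n$; this gives $\mathbb{E}[f_r(G_n)]=O_r(1)$, hence $\mathbb{E}[f_r(G_n)/|V_n|]=O(1/n)\to 0$, and since $f_r(G_n)/|V_n|\geq 0$, Markov's inequality yields $f_r(G_n)/|V_n|\to 0$ in probability, as required.

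The main obstacle is precisely this uniform bound on the expected number of fixed points of $\sigma_n(\gamma)=\prod_{j=1}^{r}\sigma_n(e_j)$. Here the non-backtracking hypothesis enters in an essential way: in the random $n$-lift the colors $\{\sigma_n(e)\}$ of the oriented edges of $G$ are independent uniform elements of $\pi_n$ subject only to $\sigma_n(\bar e)=\sigma_n(e)^{-1}$, and the condition $e_j\neq\overline{e_{j+1}}$ from Definition \ref{NBW} guarantees that the word $\sigma_n(e_1)\cdots\sigma_n(e_r)$ has no adjacent cancellation, i.e. it is a \emph{nontrivial reduced word} of length $r\geq 1$ in these independent random permutations and their inverses. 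I would then invoke the classical fact---this is the technical heart, in the spirit of Bordenave--Collins and of Nica's computation of cycle statistics of words in random permutations---that for any fixed nontrivial reduced word $w$ the expected number of fixed points of $w(\sigma_n)$ stays bounded as $n\to\infty$. Heuristically $\mathbb{E}[\mathrm{Tr}(\sigma_n(\gamma))]=n\,\mathbb{P}(\sigma_n(\gamma)\ \text{fixes a given point})=O(1)$, because by exchangeability the fixed-point probability is the same at every point and a reduced word pins down a prescribed coordinate with probability $O(1/n)$. This can be established either by citing Nica's theorem or by a direct first-moment expansion of $\mathbb{E}[\mathrm{Tr}(\sigma_n(\gamma))]$ over the entries of the permutation matrices, tracking how reducedness forces at least one ``free'' coordinate; some care is needed when $\gamma$ is a proper power of a shorter circuit, but even then the expectation is controlled by the number of divisors of the exponent and remains $O_r(1)$.

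Finally, having established $f_r(G_n)/|V_n|\to 0$ in probability for each $r\geq 1$, the ``in probability'' version of Theorem \ref{kestenmain} immediately gives weak convergence of $\mu(G_n)$ to $\mu_q$ in probability. Equivalently, one can conclude directly from the displayed identity, which shows $\int X_{r,q}\,d\mu(G_n)\to 0=\int X_{r,q}\,d\mu_q$ in probability for every $r\geq 1$ while $\int X_{0,q}\,d\mu_q=1$; since $\{X_{r,q}\}$ forms a basis this is convergence in moments in probability, and Lemma \ref{momentimpliesweak} upgrades it to weak convergence in probability.
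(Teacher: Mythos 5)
Your proof follows essentially the same route as the paper's: both reduce the statement to criterion (ii) of Theorem \ref{kestenmain} via the colored Chebyshev identity of Lemma \ref{lemma:color}, rewrite $\mathbb{E}\bigl[f_r(G_n)/|V_n|\bigr]$ as a finite sum over closed non-backtracking walks $\gamma$ of $\mathbb{E}\frac{1}{n}\mathrm{Tr}(\sigma_n(\gamma))$, and kill each term using Nica's asymptotic-freeness result for reduced words in independent random permutations, finishing with Markov's inequality. The only (immaterial) difference is that you assert the slightly stronger uniform bound $\mathbb{E}[\mathrm{Tr}(\sigma_n(\gamma))]=O(1)$, whereas the paper only cites and only needs $\mathbb{E}\frac{1}{n}\mathrm{Tr}(\sigma_n(\gamma))\to 0$.
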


The lack of closed non-backtracking walks is guaranteed by the asymptotically freeness of permutation matrices. Next, we employ Lemma \ref{lemma:color} to prove Theorem \ref{McKayrandomlift}. We remark that, by
comparing Lemma \ref{lemma:color} and Lemma \ref{ArandXrq}, the $\pi_N$-colored non-backtracking matrices $A_r^\sigma$ are precisely the non-backtracking matrices of the corresponding $N$-lift graph $G^{\sigma}$.
%We now prove Theorem \ref{McKayrandomlift}.
\begin{proof}[Proof of Theorem \ref{McKayrandomlift}]
    Combining Lemma \ref{lemma:color} and the identity (\ref{1stchebgeo}) in Corollary \ref{cor:trace} leads to
    $$q^{-r/2}\frac{f_r(G^{\sigma})}{|V(G^{\sigma})|}=\frac{1}{n|V|} \mathrm{Tr}(A^{\sigma}_r)=\frac{1}{|V|}\sum_{i \in V} \frac{1}{n}\mathrm{Tr}((A^{\sigma}_r)_{ii}).$$ Notice that $(A^{\sigma}_r)_{ii}$ is a polynomial of random permutation matrices. Taking expectations yields
    $$q^{-r/2}\mathbb{E}\frac{f_r(G^{\sigma})}{|V(G^{\sigma})|}=\frac{1}{|V|}\sum_{i \in V} \mathbb{E}\frac{1}{n}\mathrm{Tr}((A^{\sigma}_r)_{ii})=\frac{1}{|V|}\sum_{i \in V}\sum_{\gamma\in P_{ii}(r)}\mathbb{E}\frac{1}{n}\mathrm{Tr}(\sigma_{\gamma}).$$
    We recall the following result of Nica \cite[Equation (7)]{Nica}: Let $t_1,\ldots, t_k$ be uniformly and independently chosen from $\pi_n$. Then it holds that
      \begin{equation}
\lim_{n\rightarrow\infty}\mathbb{E}\frac{1}{n}t_{c_1}^{\alpha_1}t_{c_2}^{\alpha_2}...t_{c_m}^{\alpha_m}\rightarrow 0,
\end{equation}
for any $c_1\neq c_2\neq \cdots\neq c_m$ taking values from $\{1,2,\ldots,k\}$ and any $\alpha_1,\alpha_2,\ldots, \alpha_m$
from $\mathbb{Z}\setminus\{0\}$.
\\
Thus as $n\rightarrow \infty$, we have $$\mathbb{E}\frac{f_r(G^{\sigma})}{|V(G^{\sigma})|}\rightarrow 0.$$
This implies that ${f_r(G^{\sigma})}/{|V(G^{\sigma})|}\rightarrow 0$ in probability. Therefore, Theorem \ref{McKayrandomlift} follows directly from Theorem \ref{kestenmain}.
\end{proof}
\begin{comment}
\begin{remark}
    Notice that for any $d\in \mathbb{N}^+$,  every $2d\text{-}$regular graph is a covering graph of the wedge sum $\vee_d \ \mathbf{S}^1$ of $d$ copies of $\mathbf{S}^1$, viewing as a graph of one vertex and $d$ loops \cite{Gross}.  The \emph{permutation model} of random regular graphs of degree $2d$ and $n$ vertices is then defined as the random $n$ lift of the wedge sum $\vee_d \ \mathbf{S}^1$. In this way Theorem \ref{McKayrandomlift} is in fact closely related to Theorem \ref{kestenmain}.
\end{remark}
\end{comment}

\section{Higher Order Convergence}\label{section:5}
For a general sequence of probability distributions, the convergence in Wasserstein space $P_p(\mathbb{R})$ is stronger than the weak convergence in $P(\mathbb{R})$. However, if the supports of the sequence of probability distributions are uniformly bounded, the weak convergence implies the convergence in Wasserstein space $P_p(\mathbb{R})$ for any $p \in [1,\infty)$, but not for the case $p=\infty$. For example, the spectral measures of a sequence of regular graphs with fixed vertex degree are in this case.

For a sequence of regular graphs with unbounded vertex degree, the corresponding spectral measures $\mu_n$ do not have uniformly bounded supports. A natural question is when \emph{higher order} convergence  in $P_p(\mathbb{R})$ of such $\mu_n$ holds for large $p\in [1,\infty]$.

\subsection{Some Examples of $W_{\infty}$ Convergence}
As mentioned in Remark \ref{betaJacobi}, the Kesten-McKay distribution converges weakly to the semicircle distribution as the degree tends to infinity. In fact the convergence in $P_{\infty}(\mathbb{R})$ holds, as shown in the proposition below.

\begin{proposition}
Let $\mu_q$ be the Kesten-McKay distribution, i.e., the normalized spectral measure of a $(q+1)\text{-}$regular tree. Let $\mu_{\infty}$ be the semicircle distribution. Then, the following holds as $q\rightarrow \infty$:\\
(i) $\mu_q \rightarrow \mu_{\infty}$ in $P_{\infty}(\mathbb{R})$.\\
(ii) $F_{\mu_q}^{-1} \rightarrow F_{\mu_{\infty}}^{-1}$ uniformly on $(0,1)$.
\end{proposition}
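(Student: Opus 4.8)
The plan is to reduce both statements to a single uniform-convergence estimate for the inverse distribution functions, exploiting that $\mu_q$ (for $q\ge 2$) and $\mu_\infty$ share the support $[-2,2]$ and have explicit densities. By Lemma~\ref{closedformwasserstein}, statement (i) reads $W_\infty(\mu_q,\mu_\infty)=\|F_{\mu_q}^{-1}-F_{\mu_\infty}^{-1}\|_{L^\infty[0,1]}\to 0$, which is exactly the uniform convergence asserted in (ii): both functions being continuous on $[0,1]$, the $L^\infty$-norm coincides with the supremum over $(0,1)$. Hence it suffices to prove (ii), and (i) follows at once.

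First I would compare the densities directly from the explicit formula (\ref{muqexplicit}) and the definition of $\mu_\infty$. Writing $\rho_q=h_q\,\rho_\infty$ on $[-2,2]$, the ratio is $h_q(x)=\frac{q+1}{q+2+q^{-1}-x^2}$, so that $h_q-1=\frac{x^2-1-q^{-1}}{q+2+q^{-1}-x^2}$ and a one-line estimate gives $\varepsilon_q:=\|h_q-1\|_{L^\infty[-2,2]}\le \frac{3}{\,q-2+q^{-1}}\to 0$. This yields uniform convergence of the densities (since $\rho_\infty\le\pi^{-1}$), and more usefully the two-sided bound $(1-\varepsilon_q)F_{\mu_\infty}(x)\le F_{\mu_q}(x)\le (1+\varepsilon_q)F_{\mu_\infty}(x)$ for the cumulative distribution functions, whence $\|F_{\mu_q}-F_{\mu_\infty}\|_\infty\le\varepsilon_q\to 0$.

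Next I would transfer this uniform closeness of CDFs to the inverses. Both $F_{\mu_q}$ and $F_{\mu_\infty}$ are continuous, strictly increasing bijections of $[-2,2]$ onto $[0,1]$ (the densities are positive on the open interval), so their generalized inverses coincide with the ordinary inverses $G_q,G_\infty\colon[0,1]\to[-2,2]$. Fixing $u\in(0,1)$ and setting $x=G_q(u)$, so $F_{\mu_q}(x)=u$, the CDF sandwich gives $G_\infty\!\big(\tfrac{u}{1+\varepsilon_q}\big)\le x\le G_\infty\!\big(\min(\tfrac{u}{1-\varepsilon_q},1)\big)$, and the same two points also sandwich $G_\infty(u)$ by monotonicity. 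Since both arguments lie within $\delta_q:=\varepsilon_q/(1-\varepsilon_q)$ of $u$, I conclude $|G_q(u)-G_\infty(u)|\le \omega(2\delta_q)$, where $\omega$ is the modulus of continuity of $G_\infty$, and this bound is uniform in $u$. As $G_\infty$ is continuous on the compact interval $[0,1]$ it is uniformly continuous, so $\omega(2\delta_q)\to0$, which proves (ii).

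The main subtlety is precisely this last transfer: because the semicircle density vanishes at $\pm2$, the inverse functions have vertical tangents near the endpoints and are not uniformly Lipschitz, so one cannot argue by a derivative bound on $G_q$. The point is that one only needs the uniform continuity of the single limit function $G_\infty$ on $[0,1]$—the endpoint degeneracy is harmless once the estimate is phrased through a sandwich on the \emph{arguments} of $G_\infty$ rather than through its slope. The multiplicative form of the CDF comparison is convenient but not essential; the additive bound $\|F_{\mu_q}-F_{\mu_\infty}\|_\infty\le\varepsilon_q$ feeds the same sandwich argument equally well.
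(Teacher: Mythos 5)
Your argument is correct, and it shares its first step with the paper's proof --- both begin by comparing the explicit densities on $[-2,2]$ and extracting a uniform bound $\varepsilon_q\to 0$ of order $1/q$ --- but the two proofs diverge at the crucial transfer from densities (or CDFs) to inverse distribution functions. The paper works directly with $x_1=F^{-1}_{\mu_q}(p)$ and $x_2=F^{-1}_{\mu_\infty}(p)$, writes $\int_{x_1}^{x_2}\rho_\infty=\int_{-2}^{x_1}(\rho_q-\rho_\infty)$, and then exploits the \emph{concavity} of the semicircle density together with the elementary bound $\rho_\infty\bigl(\tfrac{x_1+x_2}{2}\bigr)\geq \tfrac{1}{2\pi}|x_1-x_2|$ to obtain the quantitative estimate $\tfrac{1}{2\pi}|x_1-x_2|^2\leq \|\rho_q-\rho_\infty\|_{L^1}$; this yields uniform convergence of the IDFs with an explicit H\"older-type rate $O(q^{-1/2})$. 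You instead pass to the CDFs, establish the multiplicative sandwich $(1-\varepsilon_q)F_{\mu_\infty}\leq F_{\mu_q}\leq(1+\varepsilon_q)F_{\mu_\infty}$, and trap both $G_q(u)$ and $G_\infty(u)$ between $G_\infty$ evaluated at two points within $\delta_q$ of $u$, concluding via the uniform continuity of $G_\infty$ on the compact interval $[0,1]$. Your route is softer: it gives no rate (only $\omega(2\delta_q)\to0$ for the modulus of continuity of $G_\infty$), but it needs nothing about the shape of $\rho_\infty$ beyond strict positivity on the open interval and continuity of the inverse, so it would apply verbatim to other limit laws where concavity fails; the paper's route buys an explicit convergence rate at the price of a density-specific geometric input. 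You also correctly identified and handled the genuine subtlety --- the vanishing of $\rho_\infty$ at $\pm2$ rules out any uniform Lipschitz bound on the inverses --- by sandwiching the \emph{arguments} of $G_\infty$ rather than differentiating it, which is exactly the obstruction the paper's concavity trick is designed to circumvent. Both proofs are complete and correct.
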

See Figure \ref{figure:comparison1} for a comparison of the Kesten-McKay distribution $\mu_{50}$ and the semicircle distribution $\mu_{\infty}$.
\begin{figure}
    \centering
    \begin{tikzpicture}
\begin{axis}[xmin=-2.5,xmax=2.5,ymin=-0.1,ymax=0.5,
    axis lines = middle,
    ytick={0},
    yticklabels={},
    xtick={-2, 0, 2},
    xlabel = \(x\),
    ylabel = {\(\rho(x)\)}
]
\addplot[
    domain=-2:2,
    samples=100, 
    color=green]{(4-x^2)^(0.5)/(2*pi)};
\addlegendentry{\(\mu_{\infty}\)}
\addplot[
    domain=-2:2,
    samples=1000,
    color=blue]{51*50*(4-x^2)^(0.5)/(2*pi*(51*51-50*x*x))};
\addlegendentry{\(\mu_{50}\)}

\end{axis}
\end{tikzpicture}
    \caption{Comparision of the Kesten-McKay distribution $\mu_{50}$ and the semicircle distribution $\mu_{\infty}$}
    \label{figure:comparison1}
\end{figure}
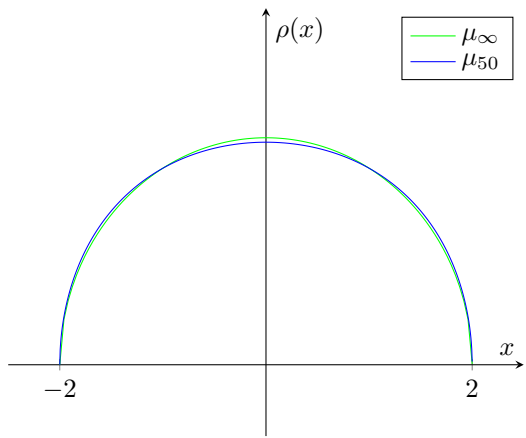

\begin{proof}
    First notice that as $q \rightarrow \infty$ the density function $$\rho_q(x):=\frac{1}{2\pi}\frac{(q+1)\sqrt{4-x^2}}{ (q^{-1/2}+q^{1/2})^2-x^2}\mathbf{1}_{|x|\leq 2}$$ converges uniformly to $$\rho_{\infty}(x):=\frac{1}{2\pi}\sqrt{4-x^2}\mathbf{1}_{|x|\leq 2},$$ 
    since an easy calculation shows $|\rho_q-\rho_{\infty}|\leq 2/(q-2)$ for $q\geq 3$.
     For any $p\in (0,1)$, let $F^{-1}_{\mu_q}(p)=x_1$ and $F^{-1}_{\mu_{\infty}}(p)=x_2$. Then we have by definition
    $$\int_{-2}^{x_1}\rho_q(x) dx=\int_{-2}^{x_2}\rho_{\infty}(x) dx=p,$$
    and hence $$\int_{x_1}^{x_2}\rho_{\infty}(x) dx=\int_{-2}^{x_1}(\rho_{q}(x)-\rho_{\infty}(x))dx.$$
    Therefore, we obtain $$\rho_{\infty}\left(\frac{x_1+x_2}{2}\right)|x_1-x_2|\leq \left|\int_{x_1}^{x_2}\rho_{\infty}(x) dx\right|\leq \int_{-2}^{2}|\rho_{q}(x)-\rho_{\infty}(x)|dx,$$
    where the first inequality comes from the concavity of $\rho_{\infty}$. If $x_1+x_2<0$, then $$\rho_{\infty}\left(\frac{x_1+x_2}{2}\right)\geq \frac{1}{\pi}\left(\frac{x_1+x_2}{2}+2\right)\geq \frac{1}{2\pi}|x_1-x_2|.$$ 
    Similarly, when $x_1+x_2\geq0$ we have $$\rho_{\infty}\left(\frac{x_1+x_2}{2}\right)\geq \frac{1}{\pi}\left(2-\frac{x_1+x_2}{2}\right)\geq \frac{1}{2\pi}|x_1-x_2|.$$ In either case, we derive
    $$\frac{1}{2\pi}|x_1-x_2|^2\leq \int_{-2}^{2}|\rho_{q}(x)-\rho_{\infty}(x)|dx.$$
    Hence $F^{-1}_{\mu_q}$ converges uniformly to $F^{-1}_{\mu_{\infty}}$ on $(0,1)$ as $q \rightarrow \infty$. 
    This proves (ii). Then (i) follows directly due to Lemma \ref{closedformwasserstein}.
\end{proof}

The following proposition provides another example of $W_{\infty}$ convergence of spectral measure of circles.
See Figure \ref{figure:comparison2} for a comparison of the IDF of the spectral measure of circle $C_{53}$ and the $2$-regular tree.

\begin{proposition}
   Let $\mu(C_m)$ be the spectral measure of the $m\text{-}$circle $C_m$. Then $\mu(C_m)\rightarrow \mu_1$ in $P_{\infty}(\mathbb{R})$ as $m\rightarrow \infty$. 
\end{proposition}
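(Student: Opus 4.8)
The plan is to work directly with the inverse distribution functions, exactly as in the proof of the previous proposition, and to invoke Lemma \ref{closedformwasserstein}, which identifies $W_\infty(\mu(C_m),\mu_1)$ with $\|F^{-1}_{\mu(C_m)}-F^{-1}_{\mu_1}\|_{L^\infty[0,1]}$. Since $C_m$ is $2$-regular we have $q=1$, so no rescaling occurs and $\mu(C_m)=\frac1m\sum_{k=0}^{m-1}\delta_{2\cos(2\pi k/m)}$, the eigenvalues of the cycle being $2\cos(2\pi k/m)$ for $k=0,\dots,m-1$.

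First I would compute $F^{-1}_{\mu_1}$ explicitly. Integrating the density (\ref{mu1explicit}) gives the distribution function $F_{\mu_1}(x)=\tfrac12+\tfrac1\pi\arcsin(x/2)$ on $[-2,2]$, whose inverse is $F^{-1}_{\mu_1}(u)=-2\cos(\pi u)$ for $u\in[0,1]$. This is the exact continuous profile against which the cycle spectrum must be measured. Next I would identify $F^{-1}_{\mu(C_m)}$, the increasing step function built from the sorted eigenvalues, recalling that on the interval $((j-1)/m,j/m]$ it equals the $j$-th smallest eigenvalue. The key observation is an exact matching at grid points: using $2\cos(2\pi k/m)=-2\cos(\pi(m-2k)/m)$ together with the pairing $k\leftrightarrow m-k$, the sorted eigenvalues are constant on consecutive pairs of such intervals, and the constant value on each pair equals $-2\cos(\pi u^\ast)$ where $u^\ast$ is precisely the common midpoint of that pair. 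In other words, on each constancy interval of length $2/m$ the step function agrees with $F^{-1}_{\mu_1}(u)=-2\cos(\pi u)$ exactly at the midpoint.

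From this matching the estimate is immediate. Since $u\mapsto-2\cos(\pi u)$ has derivative bounded by $2\pi$, on a constancy interval whose midpoint is a point of agreement we get $|F^{-1}_{\mu(C_m)}(u)-F^{-1}_{\mu_1}(u)|\le 2\pi\cdot(1/m)=2\pi/m$. I would then conclude $\|F^{-1}_{\mu(C_m)}-F^{-1}_{\mu_1}\|_{L^\infty[0,1]}\le 2\pi/m\to0$, and hence $W_\infty(\mu(C_m),\mu_1)\to0$ by Lemma \ref{closedformwasserstein}, which in fact yields the explicit rate $O(1/m)$.

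The main obstacle, and the only genuinely fiddly part, is the bookkeeping in the sorting step: the eigenvalues $\pm2$ are simple while all other eigenvalues have multiplicity two, and the precise pairing of sorted positions with the grid $\{-2\cos(\pi j/m)\}$ depends on the parity of $m$. For the simple eigenvalues $\pm2$ the constancy interval is one-sided and the midpoint matching fails, but there the continuous IDF varies by only $2\bigl(1-\cos(\pi/m)\bigr)=O(1/m^2)$, so the bound is even better; all remaining (paired) intervals are controlled uniformly by the Lipschitz estimate above, giving the same $O(1/m)$ bound for both parities. Conceptually, the clean reason behind the midpoint matching is that both measures are pushforwards of rotation-invariant measures on the circle under the $2$-Lipschitz map $\theta\mapsto2\cos\theta$ --- $\mu(C_m)$ from the uniform measure on the $m$-th roots of unity and $\mu_1$ from the uniform measure on the whole circle --- so that an $O(1/m)$ circle transport is carried to the line. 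I would use this viewpoint to motivate the estimate but carry out the proof via the explicit IDFs, to stay within the framework already set up.
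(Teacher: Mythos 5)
Your proposal is correct and follows essentially the same route as the paper: compute $F^{-1}_{\mu_1}(u)=-2\cos(\pi u)$, write $F^{-1}_{\mu(C_m)}$ as the explicit step function on intervals of length $2/m$ whose values are $-2\cos(\pi j/m)$ at the appropriate grid points, and bound the difference uniformly by $O(1/m)$ via the Lipschitz estimate $|\cos\alpha-\cos\beta|\le|\alpha-\beta|$ before invoking Lemma \ref{closedformwasserstein}. The only cosmetic difference is that you organize the parity bookkeeping via midpoint matching (yielding $2\pi/m$) while the paper lists the even/odd cases directly (yielding $4\pi/m$); both give the same conclusion.
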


\begin{proof}
Firstly, it is direct to check that $$F^{-1}_{\mu_1}(p)=-2\cos \pi p, \ p\in[0,1].$$
Denote the IDF of $\mu(C_m)$ as $F^{-1}_m$ for short. For an even number $m$,
    $$F^{-1}_{m}(p)=-2, \ p \in \left(0,\frac{1}{m}\right),$$
    and 
    $$F^{-1}_{m}(p)=-2\cos \frac{2k\pi }{m}, \ p\in \left(\frac{2k-1}{m}, \frac{2k+1}{m}\right).$$
   For an odd number $m$, 
    $$F^{-1}_{m}(p)=-2\cos \frac{\pi}{m}, \ p \in (0,1/m),$$
    and for $1\leq k \leq m/2$,
    $$F^{-1}_{m}(p)=-2\cos \frac{(2k+1)\pi}{m}, \ p\in \left(\frac{2k-1}{m}, \frac{2k+1}{m}\right).$$ 
    Since $|\cos \alpha -\cos \beta|\leq |\alpha -\beta|$, we have
    $$|F^{-1}_{\mu_1}(p)-F^{-1}_{m}(p)|\leq \frac{4 \pi }{m}\ \text{a.e. on}\ [0,1] .$$
    This completes the proof.
\end{proof}

\begin{figure}
    \centering
    \begin{tikzpicture}{a}
\begin{axis}[
    axis lines = middle,
    xlabel = \(p\),
    ylabel = {\(F^{-1}_{\mu}(p)\)},
    xmin=-0.1, xmax=1.1,
    xtick={0,1/2,1},
    xticklabels={$0$, $\frac{1}{2}$, $1$},
    xticklabel style={anchor=north},
    ymin=-2.5, ymax=3.9,
    ytick={-2,0,2},
    yticklabels={$-2$, $0$, $2$},
    yticklabel style={anchor=north east},
    ymajorgrids=true,
    grid style=dashed
]
\addplot[
    domain=0:1, 
    samples=100,
    color=green]{(-2)*cos((pi)*deg(x))};
    \addlegendentry{\(\mu_1\)};
    \addplot[domain=0/53:1/53,color=blue]{(-2)*cos((pi)*deg(1/53))};
\addplot[domain=1/53:3/53,color=blue]{(-2)*cos((pi)*deg(3/53))};
\addplot[domain=3/53:5/53,color=blue]{(-2)*cos((pi)*deg(5/53))};
\addplot[domain=5/53:7/53,color=blue]{(-2)*cos((pi)*deg(7/53))};
\addplot[domain=7/53:9/53,color=blue]{(-2)*cos((pi)*deg(9/53))};
\addplot[domain=9/53:11/53,color=blue]{(-2)*cos((pi)*deg(11/53))};
\addplot[domain=11/53:13/53,color=blue]{(-2)*cos((pi)*deg(13/53))};
\addplot[domain=13/53:15/53,color=blue]{(-2)*cos((pi)*deg(15/53))};
\addplot[domain=15/53:17/53,color=blue]{(-2)*cos((pi)*deg(17/53))};
\addplot[domain=17/53:19/53,color=blue]{(-2)*cos((pi)*deg(19/53))};
\addplot[domain=19/53:21/53,color=blue]{(-2)*cos((pi)*deg(21/53))};
\addplot[domain=21/53:23/53,color=blue]{(-2)*cos((pi)*deg(23/53))};
\addplot[domain=23/53:25/53,color=blue]{(-2)*cos((pi)*deg(25/53))};
\addplot[domain=25/53:27/53,color=blue]{(-2)*cos((pi)*deg(27/53))};
\addplot[domain=27/53:29/53,color=blue]{(-2)*cos((pi)*deg(29/53))};
\addplot[domain=29/53:31/53,color=blue]{(-2)*cos((pi)*deg(31/53))};
\addplot[domain=31/53:33/53,color=blue]{(-2)*cos((pi)*deg(33/53))};
\addplot[domain=33/53:35/53,color=blue]{(-2)*cos((pi)*deg(35/53))};
\addplot[domain=35/53:37/53,color=blue]{(-2)*cos((pi)*deg(37/53))};
\addplot[domain=37/53:39/53,color=blue]{(-2)*cos((pi)*deg(39/53))};
\addplot[domain=39/53:41/53,color=blue]{(-2)*cos((pi)*deg(41/53))};
\addplot[domain=41/53:43/53,color=blue]{(-2)*cos((pi)*deg(43/53))};
\addplot[domain=43/53:45/53,color=blue]{(-2)*cos((pi)*deg(45/53))};
\addplot[domain=45/53:47/53,color=blue]{(-2)*cos((pi)*deg(47/53))};
\addplot[domain=47/53:49/53,color=blue]{(-2)*cos((pi)*deg(49/53))};
\addplot[domain=49/53:51/53,color=blue]{(-2)*cos((pi)*deg(51/53))};
\addplot[domain=51/53:53/53,color=blue]{(-2)*cos((pi)*deg(53/53))};
\addlegendentry{\(\mu(C_{53})\)}
\end{axis}
\end{tikzpicture}
    \caption{Comparision of the IDF of $\mu(C_{53})$ and that of the arcsine law $\mu_1$}
    \label{figure:comparison2}
\end{figure}
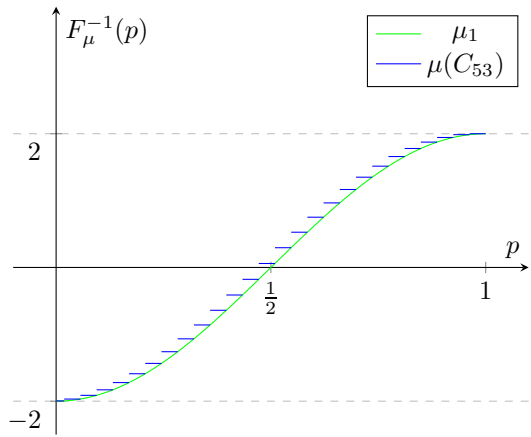

\subsection{Higher order convergence and locally tree-like condition}

For a sequence of random regular graphs $G_n \in G(n,q_n+1)$ with adjacency matrix $A_n$, Dumitriu and Pal \cite{Dumitriu-Pal} show that if $q_n=n^{o(1)}$ and $q_n$ tends to infinity, then the normalized spectral measure of $G_n$ converges weakly to the semicircle distribution in probability. Later, Tran, Vu, and Wang \cite{Tran-Vu-Wang}
extends their result to the case that $q_n \leq n/2$.

In this subsection, we use the Chebyshev polynomials and a generalization of Theorem \ref{kestenmain} to show that the condition that $q_n=n^{o(1)}$ and $q_n$ tends to infinity in fact implies the convergence in Wasserstein space to the semicircle distribution. 
In fact, the condition $q_n=n^{o(1)}$ is also necessary for convergence in $P_p(\mathbb{R})$ for any $p\in [1,\infty)$.
\begin{lemma}
    Let $G_n=(V_n, E_n)$ be a sequence of $(q_n+1)$-regular graphs with $|V_n|\rightarrow \infty$. If the normalized spectral measure $\mu_n$ converges in $P_p(\mathbb{R})$ for any $p\in [1,\infty)$ to $\mu_{\infty}$ then $q_n=|V_n|^{o(1)}$.
\end{lemma}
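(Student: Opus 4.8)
The plan is to extract a lower bound on the high moments of $\mu_n$ from the rigid top eigenvalue of a regular graph, compare it against the finite moments of the semicircle limit, and then let $p \to \infty$.

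First I would recall the elementary spectral fact that for a $(q_n+1)$-regular graph $G_n$, the all-ones vector on each connected component is an eigenvector of the adjacency matrix $A_n$ with eigenvalue equal to the degree $q_n+1$, and this is the largest eigenvalue; hence $\lambda_1(A_n) = q_n+1$. Consequently $\mu_n$ carries an atom of mass $1/|V_n|$ located at $q_n^{-1/2}(q_n+1) = q_n^{1/2} + q_n^{-1/2} \geq q_n^{1/2}$. This single atom already forces, for every $p \in [1,\infty)$,
$$\int_\mathbb{R} |x|^p \, d\mu_n \;\geq\; \frac{1}{|V_n|}\bigl(q_n^{-1/2}(q_n+1)\bigr)^p \;\geq\; \frac{q_n^{p/2}}{|V_n|}.$$

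Next I would invoke Lemma \ref{criterionwasserstein}: convergence $\mu_n \to \mu_\infty$ in $P_p(\mathbb{R})$ implies $\int_\mathbb{R} |x|^p \, d\mu_n \to \int_\mathbb{R} |x|^p \, d\mu_\infty =: m_p$, which is finite since $\mu_\infty$ has bounded support $[-2,2]$. Thus for each fixed $p$ there is a constant $M_p$ with $\int_\mathbb{R} |x|^p \, d\mu_n \leq M_p$ for all large $n$. Combining this with the lower bound gives $q_n^{p/2} \leq M_p |V_n|$, that is
$$\log q_n \;\leq\; \frac{2}{p}\bigl(\log M_p + \log |V_n|\bigr).$$
Dividing by $\log |V_n|$, which is positive and tends to $\infty$, and taking $\limsup_n$ yields $\limsup_n \frac{\log q_n}{\log |V_n|} \leq \frac{2}{p}$.

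Finally, since the left-hand side does not depend on $p$, I would let $p \to \infty$ to obtain $\limsup_n \frac{\log q_n}{\log |V_n|} \leq 0$; as $q_n \geq 1$ gives $\log q_n \geq 0$, the limit exists and equals $0$, which is exactly the assertion $q_n = |V_n|^{o(1)}$. The argument is short, and the only genuine idea is that the normalized top point $q_n^{-1/2}(q_n+1)$ escapes to infinity like $q_n^{1/2}$. The point to watch, and the reason the hypothesis must be convergence in \emph{every} $P_p$, is the competition between the atom's vanishing mass $1/|V_n|$ and its diverging position $q_n^{1/2}$: convergence in a single $P_p$ would only bound $q_n$ by a fixed power of $|V_n|$, and it is precisely the passage $p \to \infty$ that upgrades this to subpolynomial growth.
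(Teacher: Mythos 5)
Your proof is correct and follows essentially the same route as the paper: the paper's one-line inequality $q_n^r/|V_n| \leq \int_\mathbb{R} x^{2r}\,d\mu_n$ is exactly your atom of mass $1/|V_n|$ at the normalized top eigenvalue $q_n^{-1/2}(q_n+1) \geq q_n^{1/2}$, and both arguments then combine boundedness of each fixed moment (via Lemma \ref{criterionwasserstein}) with the passage $p \to \infty$ to get subpolynomial growth. Your write-up merely makes explicit the justification the paper leaves implicit.
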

\begin{proof}
    By Lemma \ref{criterionwasserstein}, for any $r\in \mathbb{N}^+$, $$\int_\mathbb{R} x^{2r}d\mu_n(x) \rightarrow \int_\mathbb{R} x^{2r}d\mu_{\infty}(x) \,\,\text{as}\ n\rightarrow \infty,$$
    the sequence $\{\int x^{2r}d\mu_n(x)\}_n^{\infty}$ is bounded. On the other hand, we have
    $$0<q_n^r/|V_n|\leq \int_\mathbb{R} x^{2r}d\mu_n(x).$$
    This implies $q_n=|V_n|^{o(1)}$.
\end{proof}

However, the condition $q_n=|V_n|^{o(1)}$ does not guarantee in general the convergence of $\mu_n$ to $\mu_{\infty}$. On the other hand, the convergence does hold for almost all such sequences of regular graphs. By using the Chebyshev polynomials, we next strengthen the result of Dumitriu and Pal as follows.
\begin{theorem}\label{Dumitriu-Pal}
    For a sequence of random regular graphs $G_n \in G(n,q_n+1)$ with $q_n =n^{o(1)}$ and $q_n$ tends to infinity, the correspoinding normalized spectral measure $\mu_{n}=\mu(G_n)$ converges to the semicircle distribution $\mu_\infty$ in $(P_{p}(\mathbb{R}),W_{p})$ for all $p\in [1,\infty)$ almost surely.
    \begin{comment}
     (i) For any $p \in [1,\infty)$, $\mu_n \rightarrow \mu_{\infty}$ in $P_p(\mathbb{R})$  as $n\rightarrow \infty$.\\
    (ii) For any $p \in [1,\infty)$, $F_{\mu_n}^{-1} \rightarrow F_{\mu_{\infty}}^{-1}$ in $L^p[0,1]$ as $n\rightarrow \infty$.\\
    \end{comment}
\end{theorem}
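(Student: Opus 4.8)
The plan is to reduce the Wasserstein statement to an almost sure convergence of moments, and then to express each such moment through closed non-backtracking walk counts via Corollary \ref{cor:trace}, exactly as in the fixed-degree criterion Theorem \ref{kestenmain}; the only genuinely new ingredient is a second-moment estimate for these counts. First I would reduce. Since $\mu_\infty$ has bounded support it is determined by its moments, so by Lemma \ref{equivalentconvergence} it suffices to show that, almost surely, $\mu_n$ converges to $\mu_\infty$ in moments. The polynomials $X_r$ have degrees $0,1,2,\dots$, so they form a basis of $\mathbb{R}[x]$, and by the change of variables $X_r(x)=U_r(x/2)$ (Remark \ref{betaJacobi}) they are orthonormal in $L^2(\mu_\infty)$; hence $\int X_r\,d\mu_\infty=\delta_{r0}$, and convergence in moments is equivalent to $\int X_r\,d\mu_n\to\delta_{r0}$ a.s. for every $r$. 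Using the inversion $X_r=\sum_{0\le k\le r/2}q_n^{-k}X_{r-2k,q_n}$ coming from $X_{r,q}=X_r-q^{-1}X_{r-2}$ (Lemma \ref{Xrqexplicit}), together with $X_{0,q}\equiv 1$ and $q_n\to\infty$, this reduces, for every fixed $r\ge 1$, to the single statement $\int X_{r,q_n}\,d\mu_n\to 0$ a.s.

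Next I would rewrite this quantity. By Corollary \ref{cor:trace} (equation (\ref{1stchebgeo})) applied with $q=q_n$,
\[
\int X_{r,q_n}\,d\mu_n=\frac1n\,\mathrm{Tr}\bigl(X_{r,q_n}(q_n^{-1/2}A_n)\bigr)=\frac{q_n^{-r/2}f_r(G_n)}{n}\ge 0,
\]
so everything reduces to proving $f_r(G_n)/(n\,q_n^{r/2})\to0$ almost surely, where $f_r(G_n)$ is the number of closed non-backtracking walks of length $r$. The heart of the argument is then the two moment bounds
\[
\mathbb{E}\bigl[f_r(G_n)\bigr]=O_r\bigl(q_n^{\,r}\bigr),\qquad \mathbb{E}\bigl[f_r(G_n)^2\bigr]=O_r\bigl(q_n^{\,2r}\bigr).
\]

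I would establish these by path-counting in the configuration model: writing $\mathbb{E}[f_r^2]=\sum_{(\gamma_1,\gamma_2)}\mathbf{P}(\gamma_1,\gamma_2\text{ realized})$ and grouping pairs by the number $t$ of vertices and $s$ of edges in their union, each class contributes at the order $n^{\,t-s}q_n^{\,s}$. The non-backtracking constraint forces every nontrivial closed non-backtracking walk to traverse a cycle, so every component of the union carries at least as many edges as vertices; thus $t\le s$, the factor $n^{\,t-s}$ never grows, and the maximal power $q_n^{\,2r}$ (with $t=s=2r$) is attained only by edge-disjoint pairs of circuits around two $r$-cycles, all overlapping configurations being of strictly lower order. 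The first-moment bound is the same computation for a single walk. I expect this step to be the main obstacle: one must check that overlapping pairs genuinely contribute lower-order terms, and one must transfer the configuration-model computation to the uniform model $G(n,q_n+1)$. The latter is legitimate here because $q_n=n^{o(1)}=o(\sqrt n)$ places us in the contiguity regime, but it requires care, since conditioning on simplicity is not automatically harmless once the degree grows.

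Finally I would run a second-moment Borel--Cantelli argument. For fixed $r\ge1$ and $\varepsilon>0$, Markov's inequality applied to the square gives
\[
\mathbf{P}\!\left(\frac{f_r(G_n)}{n\,q_n^{r/2}}>\varepsilon\right)\le \frac{\mathbb{E}\bigl[f_r(G_n)^2\bigr]}{\varepsilon^2\,n^2\,q_n^{\,r}}\le \frac{C_r}{\varepsilon^2}\,\frac{q_n^{\,r}}{n^2}.
\]
Since $q_n=n^{o(1)}$ we have $q_n^{\,r}=n^{o(1)}$ for each fixed $r$, so the right-hand side is $O_r(\varepsilon^{-2}n^{-2+o(1)})$, which is summable in $n$. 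By Borel--Cantelli and the nonnegativity of $f_r$, we conclude $f_r(G_n)/(n\,q_n^{r/2})\to0$ a.s.; intersecting the resulting countably many full-measure events over all $r\ge1$ yields $\int X_{r,q_n}\,d\mu_n\to0$ a.s. for every $r$ simultaneously. By the reduction of the first paragraph this gives convergence of $\mu_n$ to $\mu_\infty$ in moments almost surely, and hence $W_p(\mu_n,\mu_\infty)\to0$ a.s. for all $p\in[1,\infty)$.
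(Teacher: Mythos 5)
Your overall architecture coincides with the paper's: both arguments reduce the theorem, via Lemma \ref{equivalentconvergence} and the Chebyshev moments, to showing $q_n^{-r/2}f_r(G_n)/n \to 0$ almost surely for each fixed $r\geq 1$ (this reduction is packaged in the paper as Theorem \ref{momentlocallytree}), and both then aim to finish with a second-moment bound, Markov's inequality, and Borel--Cantelli, exploiting that any fixed power of $q_n=n^{o(1)}$ is still $n^{o(1)}$ and hence harmless against $n^{-2}$. Your reduction steps and the final Borel--Cantelli computation are correct.

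The gap is exactly where you anticipated it: the bound $\mathbb{E}[f_r(G_n)^2]=O_r(q_n^{2r})$ in the \emph{uniform} model $G(n,q_n+1)$ is not established by your sketch. The configuration-model path-counting is fine as far as it goes, but the transfer to the uniform model cannot be done by conditioning on simplicity: for degree $q_n+1\to\infty$ the simplicity probability decays like $e^{-cq_n^2}\to 0$, so "contiguity" in the usual fixed-degree sense is unavailable, and a ratio of the form $\mathbb{E}[\cdot\,|\,\text{simple}]\leq \mathbb{E}[\cdot]/\mathbf{P}(\text{simple})$ destroys the estimate. Supplying this step requires switching-type arguments or asymptotic enumeration, which is precisely the content the paper outsources: it cites McKay--Wormald--Wysocka for $\mathbb{E}Z_k(G_n)^2\sim\lambda_{k,n}^2+\lambda_{k,n}$ with $\lambda_{k,n}=q_n^k/2k$ directly in the uniform model, and then controls $f_r$ by the cycle counts through the purely deterministic inequality \eqref{frzr} of Lemma \ref{NBWandcircle}, namely $f_r\leq (q+1)^2q^{2r-2}\sum_{k\leq r}kZ_k$. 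This yields only a crude bound $\mathbb{E}[f_r^2]=O_r(q_n^{6r})$ rather than your sharp $O_r(q_n^{2r})$, but sharpness is irrelevant here since any polynomial in $q_n$ suffices for summability. If you replace your path-counting step by this citation plus \eqref{frzr}, your argument closes and becomes essentially the paper's proof.
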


In order to prove Theorem \ref{Dumitriu-Pal}, we establish a characterization of convergence in Wasserstein space in the same spirit of Theorem \ref{kestenmain}.
The next lemma shows that $P_{p}(\mathbb{R})$ convergence to the semicircle distribution is equivalent to the lack of small circuits.

\begin{lemma}\label{lackofsmallcircuit}
    Let $G_n=(V_n, E_n)$ be a sequence of $(q_n+1)$-regular graphs. Suppose that the corresponding normalized spectral measure $\mu_n \rightharpoonup \mu_{\infty}$ as $n\rightarrow \infty$. Let $m \in \mathbb{N}^+$ be an even number. Then the following are equivalent:
    \begin{itemize}
        \item [(i)] For any $p \in [1,m]$, $\mu_n \rightarrow \mu_{\infty}$ in $P_p(\mathbb{R})$  as $n\rightarrow \infty$.
    \item[(ii)] For any $ p \in [1,m]$, $F_{\mu_n}^{-1} \rightarrow F_{\mu_{\infty}}^{-1}$ in $L^p[0,1]$ as $n\rightarrow \infty$;
    \item[(iii)] $q_n \rightarrow \infty$ and for $ r \in\{1,\ldots,m\}$, 
$$q_n^{-r/2}\frac{c_{r}(G_n)}{|V_n|} \rightarrow 0\ \text{as}\ n\rightarrow \infty.$$
\item[(iv)] $q_n \rightarrow \infty$ and for $ r\in \{1,\ldots,m\}$, 
$$q_n^{-r/2}\frac{f_{r}(G_n)}{|V_n|} \rightarrow 0\ \text{as}\ n\rightarrow \infty.$$
    \end{itemize}
\end{lemma}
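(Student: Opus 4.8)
The plan is to treat the four conditions as two cheap equivalences glued together by the Chebyshev machinery, using throughout the shorthand $g_r:=q_n^{-r/2}f_r(G_n)/|V_n|$ and $h_r:=q_n^{-r/2}c_r(G_n)/|V_n|$ (so $g_0=1$, and both are nonnegative). The equivalence (i)$\Leftrightarrow$(ii) is immediate from Lemma~\ref{closedformwasserstein}, since $W_p(\mu_n,\mu_\infty)=\|F^{-1}_{\mu_n}-F^{-1}_{\mu_\infty}\|_{L^p[0,1]}$, so convergence in $P_p$ is \emph{by definition} $L^p[0,1]$ convergence of the inverse distribution functions, simultaneously for every $p\in[1,m]$. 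The equivalence (iii)$\Leftrightarrow$(iv) is purely algebraic: distributing the powers of $q_n$ in \eqref{frcr} rearranges it into $g_r=h_r+(1-q_n^{-1})\sum_{1\le i<r/2}h_{r-2i}$. Since $1-q_n^{-1}\in[0,1)$ and the sum is finite, (iii) forces $g_r\to 0$; conversely $c_r\le f_r$ from \eqref{frzr} gives $h_r\le g_r$, so (iv) forces $h_r\to 0$. As $q_n\to\infty$ occurs in both, this gives (iii)$\Leftrightarrow$(iv).

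The heart of the argument is (iv)$\Leftrightarrow$(i), which I route through the single identity
\[
\int_\mathbb{R} X_r\,d\mu_n=q_n^{-r/2}\frac{1}{|V_n|}\sum_{0\le k\le r/2}f_{r-2k}(G_n)=\sum_{0\le k\le r/2}q_n^{-k}\,g_{r-2k},
\]
coming from \eqref{2ndchebgeo}, together with the orthogonality $\int_\mathbb{R} X_r\,d\mu_\infty=\delta_{r0}$ (the polynomials $X_r(x)=U_r(x/2)$ of the second kind are orthogonal to the semicircle law, equivalently the $q\to\infty$ limit of \eqref{orthogonalrelation}). For (iv)$\Rightarrow$(i): each summand $q_n^{-k}g_{r-2k}$ tends to $0$ for $1\le r\le m$, using $g_{r-2k}\to 0$ when $r-2k\ge 1$ and $q_n^{-r/2}g_0=q_n^{-r/2}\to 0$ when $r$ is even, so $\int X_r\,d\mu_n\to\delta_{r0}$. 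Since $\deg X_r=r$, the family $\{X_0,\dots,X_m\}$ spans the polynomials of degree $\le m$, so this is convergence of all moments up to order $m$; in particular the (even) top moment converges, and Lemma~\ref{criterionwasserstein} upgrades the given weak convergence to $P_m$ convergence, whence $P_p$ convergence for all $p\le m$ because $W_p\le W_m$.

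For the converse (i)$\Rightarrow$(iv): $P_m$ convergence yields the uniform bound $\sup_n\int_\mathbb{R}|x|^m\,d\mu_n<\infty$ and, with the given weak convergence, convergence of every moment up to order $m$ (for $j<m$ one uses that $x^j$ has growth $o(|x|^m)$, hence uniform integrability; for $j=m$, which is even, $x^m=|x|^m$ and Lemma~\ref{criterionwasserstein} applies directly). Passing to the Chebyshev basis gives $\int X_r\,d\mu_n\to\delta_{r0}$ for $1\le r\le m$. Taking $r=2$ gives $\int X_2\,d\mu_n=g_2+q_n^{-1}\to 0$, and since both summands are nonnegative this forces $q_n\to\infty$. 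I then induct on $r$: in $\int X_r\,d\mu_n=g_r+\sum_{1\le k\le r/2}q_n^{-k}g_{r-2k}$ every tail term tends to $0$ by the inductive hypothesis (and by $q_n\to\infty$ for the $g_0$ term when $r$ is even), so $g_r\to 0$, establishing (iv).

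I expect the only genuinely delicate step to be the passage, in (i)$\Rightarrow$(iv), from Wasserstein convergence to convergence of the \emph{odd}-order moments: the even moments come for free since $|x|^{2j}=x^{2j}$ and Lemma~\ref{criterionwasserstein} controls absolute moments, but the odd ones require the uniform integrability supplied by the uniform $m$-th moment bound. Everything else—the recursion for $\int X_r\,d\mu_n$, the induction extracting each $g_r$ (which leans on the nonnegativity of the counts $f_r$), and the algebra behind (iii)$\Leftrightarrow$(iv)—is bookkeeping with the identities \eqref{frcr}, \eqref{frzr} and \eqref{2ndchebgeo} already at hand.
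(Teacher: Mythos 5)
Your proposal is correct and follows essentially the same route as the paper: (i)$\Leftrightarrow$(ii) via Lemma \ref{closedformwasserstein}, (iii)$\Leftrightarrow$(iv) via Lemma \ref{NBWandcircle}, and the main equivalence via Chebyshev moment identities, orthogonality to the semicircle law, and Lemma \ref{criterionwasserstein}. The only (cosmetic) difference is that you link (i) to (iv) through the polynomials $X_r$ and the counts $f_r$ via \eqref{2ndchebgeo}, whereas the paper links (i) to (iii) through $Y_r$ and the circuit counts $c_r$ via \eqref{circuitnumber}; your explicit uniform-integrability justification for the convergence of odd moments in (i)$\Rightarrow$(iv) is a point the paper glosses over.
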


\begin{proof}
    (i)$\Leftrightarrow$(ii): This is due to Lemma \ref{closedformwasserstein}. \\
    (iii)$\Leftrightarrow$(iv): By Lemma \ref{NBWandcircle}, we have 
    $$q^{-r/2}c_r\leq q^{-r/2}f_r \leq q^{-r/2}c_r+\sum_{1\leq i<  r/2}q^{-r/2+i}c_{r-2i},$$
    showing the equivalence of (iii) and (iv). \\
    (i)$\Rightarrow$(iii) 
For $r=2$,
    $$q_n^{-1}\frac{c_{2}(G_n)}{|V_n|}+q_n^{-1}-1=\int  Y_2(x) d\mu_n(x) \rightarrow \int Y_2(x) d\mu_{\infty}(x)=-1,$$
thus as $n \rightarrow \infty$, $q_n \rightarrow \infty$ and
$$q_n^{-1}\frac{c_{2}(G_n)}{|V_n|} \rightarrow 0.$$ 
For  $2 < r \leq m $,
   again $$\int_\mathbb{R} Y_r(x) d\mu_n(x) \rightarrow \int_\mathbb{R} Y_r(x) d\mu_{\infty}(x)=0.$$
Recall 
    $$\int_\mathbb{R}  Y_r(x) d\mu_n(x)=q_n^{-r/2}\frac{c_{r}(G_n)}{|V_n|},$$
    $$\int_\mathbb{R}  Y_r(x) d\mu_n(x)=q_n^{-r/2}\frac{c_{r}(G_n)}{|V_n|}+(1-q_n)q_n^{-r/2},$$
    since for $r >2$, $(1-q_n)q_n^{-r/2}\rightarrow 0$ as $n \rightarrow \infty$,
    we have also $$q_n^{-r/2}\frac{c_{r}(G_n)}{|V_n|} \rightarrow 0.$$\\
(iii)$\Rightarrow$(i)
As seen in the proof of (i)$\Rightarrow$(iii), now for $2\leq r \leq m$,
$$\int_\mathbb{R} Y_r(x) d\mu_n(x) \rightarrow \int_\mathbb{R} Y_r(x) d\mu_{\infty}(x),$$
thus for any polynomial $P$ with $\mathrm{deg}\ P \leq m$,
$$\int_\mathbb{R} P(x) d\mu_n(x) \rightarrow \int_\mathbb{R} P(x) d\mu_{\infty}(x),$$
in particular 
$$\int_\mathbb{R} |x|^m d\mu_n(x)=\int_\mathbb{R} x^m d\mu_n(x) \rightarrow \int_\mathbb{R} x^m d\mu_{\infty}(x)=\int_\mathbb{R} |x|^m d\mu_{\infty}(x).$$
By Lemma \ref{criterionwasserstein} and $\mu_n \rightharpoonup \mu_{\infty}$, we deduce that $\mu_n \rightarrow \mu_{\infty}$ in $P_p(\mathbb{R})$ for any $p \in [1,m]$.
This completes the proof.
\end{proof}

As a result, we have the following characterization.
\begin{theorem}[Locally tree-like $\Leftrightarrow$ Convergence in moments]\label{momentlocallytree}
    For a sequence of $(q_n+1)$-regular graphs $G_n=(V_n, E_n)$, the following are equivalent:
    \begin{itemize}
     \item[(i)] For any $p \in [1,\infty)$, $\mu_n \rightarrow \mu_{\infty}$ in $P_p(\mathbb{R})$ as $n\rightarrow \infty$;
     \item[(ii)] For any $p \in [1,\infty)$, the IDF $F_{\mu_n}^{-1}$ converges to the IDF $F_{\mu_{\infty}}^{-1}$ in $L^p[0,1]$;
     \item[(iii)] The spectral measure $\mu_n$ of $G_n$ converges in moments to the semicircle distribution;
     \item[(iv)] $q_n\rightarrow \infty$, and for any $r \geq 1$,
    $$q_n^{-r/2}\frac{c_{r}(G_n)}{|V_n|} \rightarrow 0\ \text{as}\ n\rightarrow \infty;$$
     \item[(v)] $q_n\rightarrow \infty$, and for any $r \geq 1$,
    $$q_n^{-r/2}\frac{f_{r}(G_n)}{|V_n|} \rightarrow 0\ \text{as}\ n\rightarrow \infty;$$
    \end{itemize}
\end{theorem}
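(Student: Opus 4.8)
The plan is to prove all five statements equivalent by assembling the already-established lemmas for most of the links and reserving one direct trace computation for the essential equivalence (iii)$\Leftrightarrow$(iv). First I would record the two ``free'' links. The equivalence (i)$\Leftrightarrow$(ii) is nothing but Lemma \ref{closedformwasserstein}, which identifies $W_p(\mu_n,\mu_\infty)$ with $\|F^{-1}_{\mu_n}-F^{-1}_{\mu_\infty}\|_{L^p[0,1]}$, so $P_p$-convergence and $L^p[0,1]$-convergence of the inverse distribution functions are literally the same assertion for each $p$. The equivalence (iv)$\Leftrightarrow$(v) follows from the inequalities of Lemma \ref{NBWandcircle}, exactly as in the proof of Lemma \ref{lackofsmallcircuit}: from $q_n^{-r/2}c_r\le q_n^{-r/2}f_r\le q_n^{-r/2}c_r+\sum_{1\le i<r/2}q_n^{-r/2+i}c_{r-2i}$ the two vanishing conditions match term by term (once $q_n\to\infty$ is known).

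Next I would dispatch (i)$\Leftrightarrow$(iii). Since $\mu_\infty$ has bounded support it is uniquely determined by its moments, as recalled after Lemma \ref{momentimpliesweak}; hence this equivalence is an immediate application of Lemma \ref{equivalentconvergence} with limit measure the semicircle distribution. This is precisely the step that upgrades bare moment convergence to simultaneous $W_p$-convergence for \emph{every} $p\in[1,\infty)$, and it is where the moment-determinacy of $\mu_\infty$ is used.

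The heart of the argument is (iii)$\Leftrightarrow$(iv), which I would prove from the exact identity (\ref{circuitnumber}) of Corollary \ref{cor:trace}. Dividing by $|V_n|$ gives
$$\int_\mathbb{R} Y_r\, d\mu_n=q_n^{-r/2}\frac{c_r(G_n)}{|V_n|}-\varepsilon_r(n),\qquad \varepsilon_r(n)=\begin{cases}0,& r\text{ odd},\\ (q_n-1)q_n^{-r/2},& r\text{ even},\end{cases}$$
while a short Chebyshev computation (set $x=2\cos\theta$, so $d\mu_\infty=\tfrac{2}{\pi}\sin^2\theta\,d\theta$ and $Y_r(x)=2\cos r\theta$) yields $\int_\mathbb{R} Y_r\,d\mu_\infty=-\delta_{r,2}$ for $r\ge1$, as already used in the proof of Lemma \ref{lackofsmallcircuit}. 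Since $\{Y_r\}_{r\ge0}$ is a graded basis of $\mathbb{R}[x]$, convergence in moments is equivalent to $\int Y_r\,d\mu_n\to\int Y_r\,d\mu_\infty$ for all $r$. For (iv)$\Rightarrow$(iii) I would use $q_n\to\infty$, under which $\varepsilon_r(n)\to\delta_{r,2}$ (indeed $\varepsilon_{2}(n)=1-q_n^{-1}\to1$ and $\varepsilon_r(n)=q_n^{-(r/2-1)}-q_n^{-r/2}\to0$ for even $r>2$), so the vanishing of $q_n^{-r/2}c_r/|V_n|$ gives $\int Y_r\,d\mu_n\to-\delta_{r,2}$ for every $r$, i.e. convergence in moments. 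For (iii)$\Rightarrow$(iv) the $r=2$ moment is decisive: $\int Y_2\,d\mu_n+1=q_n^{-1}c_2/|V_n|+q_n^{-1}\to0$, and since both summands are nonnegative each tends to $0$, forcing $q_n\to\infty$ together with the $r=2$ circuit bound; then for $r>2$ the correction $\varepsilon_r(n)\to0$ and the remaining circuit conditions drop out.

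The main point to get right---and the only place where this theorem genuinely strengthens Lemma \ref{lackofsmallcircuit}---is that \emph{no} a priori weak convergence $\mu_n\rightharpoonup\mu_\infty$ is assumed. I expect the subtle step to be ensuring that (iv)$\Rightarrow$(iii) produces convergence in moments \emph{directly} from the counting hypothesis, with weak convergence appearing only afterwards as a consequence (via moment-determinacy of $\mu_\infty$ and Lemma \ref{momentimpliesweak}); one must take care that the chain of implications never silently invokes the weak convergence it is meant to establish. The extraction of $q_n\to\infty$ from the single $r=2$ moment, leaning on the nonnegativity of both $q_n^{-1}c_2/|V_n|$ and $q_n^{-1}$, is the small but crucial device that makes the whole equivalence self-contained.
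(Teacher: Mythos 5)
Your proposal is correct and follows essentially the same route as the paper: (i)$\Leftrightarrow$(ii) via Lemma \ref{closedformwasserstein}, (iv)$\Leftrightarrow$(v) via Lemma \ref{NBWandcircle}, (i)$\Leftrightarrow$(iii) via moment-determinacy of $\mu_\infty$ and Lemma \ref{equivalentconvergence}, and (iii)$\Leftrightarrow$(iv) via the exact trace identity (\ref{circuitnumber}) for $Y_r$. The paper compresses the last link into ``by the argument in Lemma \ref{lackofsmallcircuit}''; your explicit reworking of that argument without the a priori weak convergence hypothesis (extracting $q_n\to\infty$ from the $r=2$ moment by nonnegativity of both summands) is exactly the intended content and is carried out correctly.
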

\begin{proof}
    By the argument in Lemma \ref{lackofsmallcircuit}, (i)$\Leftrightarrow$(ii), and (iii)$\Leftrightarrow$(iv)$\Leftrightarrow$(v).\\
    \indent Notice that the semicircle law is compactly supported, thus is uniquely determined by its moments. By Lemma \ref{equivalentconvergence}, convergence in moments is equivalent to convergence in $P_{p}(\mathbb{R})$ for $p\in [1,\infty)$. It implies (i)$\Leftrightarrow$(iii).
\end{proof}

  As a direct corollary, we have the following result by assuming $G_n$ to be deterministic.
\begin{corollary}
    Let $\{G_n\}_{n=0}^{\infty}$ be a sequence of regular graphs. If both the degree $q_n+1$ and girth $g_n$ of $G_n$ tend to $\infty$, then the normalized spectral measure of $G_n$ converges to the semicircle distribution in $P_{p}(\mathbb{R})$ for any $p\in [1,\infty)$.
\end{corollary}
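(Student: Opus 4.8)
The plan is to reduce the entire statement to the characterization already established in Theorem \ref{momentlocallytree}: it suffices to verify condition (v) there, namely that $q_n \to \infty$ and that for every fixed $r \geq 1$ one has $q_n^{-r/2} f_r(G_n)/|V_n| \to 0$ as $n \to \infty$. Both hypotheses of the corollary feed directly into this condition, so essentially no estimation is required — the work is entirely in matching the definitions to the two inputs of the theorem.

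First I would dispose of the degree hypothesis: since $q_n + 1 \to \infty$, we immediately get $q_n \to \infty$, which is exactly the first half of condition (v). Next I would exploit the definition of girth. By Definition \ref{NBW}, $g_n$ is the length of the shortest \emph{nontrivial} closed non-backtracking walk of $G_n$, and every closed NBW of length $r \geq 1$ is nontrivial; hence there is no closed NBW of length $r$ whenever $1 \leq r < g_n$. Fixing $r \geq 1$ and using $g_n \to \infty$, there is an index $n_0$ with $g_n > r$ for all $n \geq n_0$, so that $f_r(G_n) = 0$ for all $n \geq n_0$. Consequently $q_n^{-r/2} f_r(G_n)/|V_n| = 0$ for all $n \geq n_0$, and in particular this sequence tends to $0$. (Equivalently one may argue through the circuit counts $c_r$, since circuits are a special kind of closed NBW and hence also vanish below the girth; this matches condition (iv) instead of (v).)

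Combining the two observations, condition (v) of Theorem \ref{momentlocallytree} holds, and therefore so does condition (i): the normalized spectral measure of $G_n$ converges to the semicircle distribution $\mu_\infty$ in $P_p(\mathbb{R})$ for every $p \in [1,\infty)$, which is the assertion of the corollary.

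I do not expect a substantive obstacle here; the one conceptual point worth isolating is the recognition that the condition ``girth $\to \infty$'' is precisely a \emph{local-tree-like} hypothesis phrased so as to force the closed-walk counts to vanish \emph{identically} below any fixed length. This is a strictly stronger, fully deterministic version of the almost-sure vanishing that must be extracted probabilistically in Theorem \ref{Dumitriu-Pal}, which is why the present statement follows with no probabilistic input. One could additionally note that $g_n \to \infty$ together with the growing degree forces $|V_n| \to \infty$ (via a Moore-type bound), but this plays no role in the argument, since the relevant numerators $f_r(G_n)$ vanish outright for large $n$.
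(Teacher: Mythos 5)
Your proposal is correct and is exactly the argument the paper intends: the corollary is stated as a direct consequence of Theorem \ref{momentlocallytree}, and verifying condition (v) via $q_n\to\infty$ and the vanishing of $f_r(G_n)$ for $r<g_n$ is the whole content. No discrepancy with the paper's (implicit) proof.
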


By applying the same argument as in the proof of Theorem \ref{thm:colormckay}, we derive the following corollary.

\begin{corollary}
Let $\{G_n\}_{n=0}^\infty$ be a sequence of regular graphs. Let $\mathcal{G}_{n}$ be a subgroup of $U(N_{n})$ for all $n$. Then the normalized spectral measure $\mu(G_n)$ converges in $P_{p}(\mathbb{R})$ for any $p\in [1,\infty)$ to the semicircle distribution $\mu_{\infty}$ if and only if the measure $\mu(G_n,\sigma_n)$ converges in $P_{p}(\mathbb{R})$ for any $p\in [1,\infty)$ to $\mu_{\infty}$ for any $\mathcal{G}_{n}$-colors $\{\sigma_n\}_{n=0}^\infty$.
\end{corollary}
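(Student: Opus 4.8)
The plan is to reproduce the argument of Theorem \ref{thm:colormckay} almost verbatim, but with the Kesten--McKay target $\mu_q$ replaced by the semicircle law $\mu_\infty$ and the weak-convergence criterion of Theorem \ref{kestenmain} replaced by the growing-degree characterization of Theorem \ref{momentlocallytree}. The \emph{if} direction is immediate: specializing to $N_n=1$ with the trivial color $\sigma_n$ gives $\mu(G_n,\sigma_n)=\mu(G_n)$, so convergence of all colored measures forces convergence of $\mu(G_n)$ itself. The content is the \emph{only if} direction.

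So assume $\mu(G_n)\to\mu_\infty$ in $P_p(\mathbb{R})$ for every $p\in[1,\infty)$. By Theorem \ref{momentlocallytree} this is equivalent to the two combinatorial facts $q_n\to\infty$ and $q_n^{-r/2}f_r(G_n)/|V_n|\to 0$ for every $r\ge 1$. Now fix an arbitrary $\mathcal{G}_n$-color $\sigma_n$. Exactly as in the proof of Theorem \ref{thm:colormckay}, Lemma \ref{lemma:color} gives
$$\int_{\mathbb{R}} X_{r,q_n}\,d\mu(G_n,\sigma_n)=\frac{q_n^{-r/2}}{N_n|V_n|}\sum_{i\in V_n}\mathrm{Tr}\big((A_r^{\sigma_n})_{ii}\big),$$
and since every color value is unitary of norm one, the right-hand side is bounded in absolute value by $q_n^{-r/2}f_r(G_n)/|V_n|$, which tends to $0$ for each fixed $r\ge 1$ (and equals $1$ identically when $r=0$, as $X_{0,q_n}\equiv 1$).

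The remaining step is to pass from the $n$-dependent polynomials $X_{r,q_n}$ to the fixed polynomials $X_r$, whose moments identify $\mu_\infty$: since $X_r(x)=U_r(x/2)$ and the Chebyshev polynomials of the second kind are orthogonal with respect to the semicircle law, $\int_{\mathbb{R}} X_r\,d\mu_\infty=\delta_{r,0}$. Using the explicit relation \eqref{eq:Xrq} in the form $X_r=X_{r,q_n}+q_n^{-1}X_{r-2}$, I would argue by induction on $r$. The base cases $r=0,1$ are direct because $X_r=X_{r,q_n}$ there; in the inductive step, the term $q_n^{-1}\int_{\mathbb{R}} X_{r-2}\,d\mu(G_n,\sigma_n)$ tends to $0$, since $q_n\to\infty$ while $\int_{\mathbb{R}} X_{r-2}\,d\mu(G_n,\sigma_n)$ is bounded in $n$ by the induction hypothesis, and $\int_{\mathbb{R}} X_{r,q_n}\,d\mu(G_n,\sigma_n)\to 0$ by the bound above. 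Hence $\int_{\mathbb{R}} X_r\,d\mu(G_n,\sigma_n)\to\delta_{r,0}$ for all $r\ge 0$, i.e. $\mu(G_n,\sigma_n)$ converges in moments to $\mu_\infty$. As each $\mu(G_n,\sigma_n)$ has bounded support (so finite moments of all orders) and $\mu_\infty$ is compactly supported, hence determined by its moments, Lemma \ref{equivalentconvergence} upgrades moment convergence to $W_p$-convergence for every $p\in[1,\infty)$, which is the claim.

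The main obstacle, and the reason one cannot simply quote Theorem \ref{thm:colormckay}, is that here the degree grows, so the colored measures $\mu(G_n,\sigma_n)$ do \emph{not} have uniformly bounded supports: the operator norm of $A^{\sigma_n}$ is of order $q_n$, giving a normalized support of order $\sqrt{q_n}$. Consequently one cannot extract weak convergence by a cut-off argument as in the fixed-degree setting, and genuinely needs the moment/Wasserstein machinery of Lemma \ref{equivalentconvergence}. The inductive conversion of $X_{r,q_n}$-moments into $X_r$-moments is precisely the device that absorbs this unbounded-support difficulty, exploiting $q_n\to\infty$ to kill the $q_n^{-1}$-tail at each stage.
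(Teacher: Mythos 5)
Your proposal is correct and follows essentially the same route the paper intends: the paper gives no written proof of this corollary, stating only that it follows ``by applying the same argument as in the proof of Theorem \ref{thm:colormckay}'', and your argument is precisely that adaptation --- the unitarity bound $\bigl|\int X_{r,q_n}\,d\mu(G_n,\sigma_n)\bigr|\le q_n^{-r/2}f_r(G_n)/|V_n|$ combined with the characterization in Theorem \ref{momentlocallytree} and the moment-to-$W_p$ upgrade of Lemma \ref{equivalentconvergence}. Your additional inductive step converting the $n$-dependent $X_{r,q_n}$-moments into the fixed $X_r$-moments via $X_r=X_{r,q_n}+q_n^{-1}X_{r-2}$ is exactly the detail the paper leaves implicit, and you handle it correctly.
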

 
\begin{proof}[Proof of Theorem \ref{Dumitriu-Pal}]
    First notice that $Z_1(G_n)=Z_2(G_n)\equiv 0$. Denote $\lambda_{k,n}:=\frac{1}{2k}q_n^k$ for $k\geq 3$. Under the condition $q_n=n^{o(1)}$, for $k\geq 3$
    there holds
      $$\frac{\mathbb{E}Z_k(G_n)^2}{\lambda_{k,n}^2+\lambda_{k,n}} \rightarrow 1 \,\, \text{as}\,\, n\rightarrow \infty,$$
      by the result of McKay, Wormald, and Wysocka \cite{McKay-Wormald-Wysocka}.
      Therefore, for any fixed $l\geq 1$, $k\geq 3$, there exits some positive constant $C=C(k)$
   such that  $$\mathbb{E} \frac{q_n^{2l} Z^2_k(G_n)}{n^2}  \leq C \frac{q_n^{2k+2l}}{n^2}=C n^{-2+o(1)}.$$ Thus, we obtain that for any fixed $l, k\geq 1$, there holds
    $$\mathbb{E} \sum_{n=1}^{\infty} \frac{q_n^{2l}Z^2_k(G_n)}{n^2} < \infty.$$
    As a consequence, we have by Borel-Cantelli lemma that for any fixed $l, k\geq 1$,
    $$\frac{q_n^{l} Z_k(G_n)}{n} \rightarrow 0\ \text{a.s. as}\ n\rightarrow \infty.$$
    Applying Lemma \ref{NBWandcircle}, we have for any $r\geq 1$,
    $$q_n^{-r/2}\frac{f_{r}(G_n)}{n} \rightarrow 0\ \text{a.s. as}\ n\rightarrow \infty.$$
    By Theorem \ref{momentlocallytree}, we show that $\mu_n$ converges to $\mu_{\infty}$ in $P_{p}(\mathbb{R})$ for any $p\in [1,\infty)$ almost surely.
\end{proof}

    \noindent \textbf{Acknowledgements:} 
This work is supported by the National Key R \& D Program of China 2023YFA1010200.
YG is supported by the National Key R \& D Program of China 2022YFA100740. WL is supported by the Special Funds of the National Natural Science Foundation of China No. 123B2013. SL is supported by the National Natural Science Foundation of China No. 12031017. YG and WL would like to thank Junyan Zhang and Guangyi Zou for useful discussions.

\appendix

\section{Appendix}

\subsection{Proof of Lemma \ref{NBWandcircle}}\label{A}

\begin{proof}[Proof of Lemma \ref{NBWandcircle}]
    First, notice that any closed non-backtracking walk based at $x\in G$ can be decomposed as a non-backtracking walk from $x$ to some vertex $y$, followed by a circuit based at $y$, and then the reverse of the initial non-backtracking walk from $x$ to some vertex $y$. Let the path between $y$ and $x$ be $\gamma_{y,x}=\left(\{v_{j}\}_{j=0}^{i},\{e_{j}\}_{j=0}^{i-1}\right)$ where $v_{0}=y$ and $v_{i}=x$. When $i=0$, this gives the term $c_r$. If $i\geq 1$, there are $q-1$ choices of $v_{1}$ and $(q-1)q^{j-1}$ choices of $v_{j}$ for all $1\leq j\leq i$. These imply (\ref{frcr}).\\
    \indent Now we consider (\ref{frzr}). For any circle of size $r$, we can obtain $2r$ circuits by changing the base vertex and orientation. This implies the left side of (\ref{frzr}).\\
    \indent For the right side of (\ref{frzr}), we notice that if there is a closed non-backtracking walk based at $x\in G$ of length $r$, then $B_r(x)$ contains a circle of size no more than $r$. Here, $B_r(x)$ is the induced subgraph that contains all vertices of distance no more than $r$ from $x$. This implies there exists a $y$ lying on a circle of size less than $r$ such that $x\in B_{r}(y)$. The number of choices of $y$ is no more than $\sum\limits_{1\leq k \leq r} kZ_{k}(G)$, and the number of $B_r(y)$ is no more than $(q+1)q^{r-1}$. We notice that the number of closed non-backtracking walks based at $x$ of length $r$ is no more than $(q+1)q^{r-1}$. Therefore, we obtain that
    $$f_{r}(G)\leq (q+1)q^{r-1}\cdot (q+1)q^{r-1} \cdot \sum_{1\leq k\leq r}kZ_{k}(G).$$
    Thus we prove Lemma \ref{NBWandcircle}.
\end{proof} 

\subsection{Proof of Lemma \ref{Xrqexplicit}}\label{B}

\begin{proof}[Proof of Lemma \ref{Xrqexplicit}]
Consider the Taylor expansion of  $$\frac{1-q^{-1}t^2}{1-xt+t^2}$$ as an analytic function of $t$ near the origin.
We first compute the Taylor expansion of $(1-xt+t^2)^{-1}$. By a change of variable $x=z+z^{-1}$, we have 
$$\frac{1}{1-xt+t^2}=\frac{1}{(z-t)(z^{-1}-t)}=\sum_{r=0}^{\infty}\frac{z^{r+1}-z^{-r-1}}{z-z^{-1}}t^r.$$
Thus we derive
\begin{equation}\label{chebygeneratingfunction}
    \frac{1}{1-xt+t^2}=\sum_{r=0}^{\infty}X_{r}(x)t^{r},
\end{equation}
since $$X_{r}(z+z^{-1})=U_r\left(\frac{z+z^{-1}}{2}\right)=\frac{z^{r+1}-z^{-r-1}}{z-z^{-1}}.$$ 
It is straightforward to check that $$\sum_{r=0}^{\infty}\left(X_{r}(x)-q^{-1}X_{r-2}(x)\right)t^{r}=\frac{1-q^{-1}t^2}{1-xt+t^2}.$$
Therefore, we have (\ref{eq:Xrq}). This completes the proof.
\end{proof}

\begin{comment}
    We present a sketchy proof here for completeness.
\begin{proof}
   
    Using (\ref{eq:Ur}), it is direct to check that $\int X_n(x)X_m(x)d\mu_{\infty}(x)=\delta_{mn}$.
    To show the completeness,
notice that $\mu_{\infty}$ is a Radon measure. Hence $C_c(\mathbb{R})$ is dense in
$L^{2}(\mathbb{R};\mu_{\infty})$, see Folland \cite[Proposition 7.9]{Folland}. Then the completeness is a direct result of Stone-Weierstrass approximation theorem. 
\end{proof}
\end{comment}

\subsection{Proof of Theorem \ref{thm:McKayLaw}}\label{C}
\begin{proof}[Proof of Theorem \ref{thm:McKayLaw}]

To prove Theorem \ref{thm:McKayLaw}, we need the following well-known result of Chebyshev polynomials. 
\begin{lemma}
The polynomials $\{X_{r}\}_{r=0}^{\infty}$ defined in (\ref{chebyshevexplicit}) form a complete orthonormal basis with respect to the semicircle distribution $\mu_{\infty}$.
\end{lemma}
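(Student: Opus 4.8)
The plan is to verify orthonormality by a direct trigonometric computation and then to deduce completeness from the compact support of $\mu_\infty$. For the orthonormality I would use the substitution $x = 2\cos\theta$ with $\theta \in [0,\pi]$. From the identity established in the proof of Lemma \ref{Xrqexplicit}, namely $X_r(z+z^{-1}) = \frac{z^{r+1}-z^{-r-1}}{z-z^{-1}}$, setting $z = e^{i\theta}$ gives $X_r(2\cos\theta) = \frac{\sin((r+1)\theta)}{\sin\theta}$. Since $\sqrt{4-x^2} = 2\sin\theta$ and $dx = -2\sin\theta\,d\theta$ on this range, the measure $d\mu_\infty = \frac{1}{2\pi}\sqrt{4-x^2}\,dx$ transforms so that
\begin{equation*}
\int_\mathbb{R} X_n X_m\,d\mu_\infty = \frac{2}{\pi}\int_0^\pi \sin((n+1)\theta)\sin((m+1)\theta)\,d\theta.
\end{equation*}
The two factors of $\sin\theta$ in the denominators cancel exactly against the two factors arising from $\sqrt{4-x^2}$ and from $dx$, which is the one bookkeeping point to track carefully. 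The remaining integral equals $\frac{\pi}{2}\delta_{nm}$ by the classical orthogonality of the family $\{\sin((r+1)\theta)\}_{r\ge 0}$ on $[0,\pi]$, and hence $\int_\mathbb{R} X_n X_m\,d\mu_\infty = \delta_{nm}$.

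For completeness, the key observation is that $X_r$ has degree exactly $r$ by \eqref{chebyshevexplicit}, so for each $R$ the linear span of $\{X_0,\dots,X_R\}$ coincides with the space of all polynomials of degree at most $R$. Consequently the closed linear span of $\{X_r\}_{r=0}^\infty$ in $L^2(\mathbb{R},\mu_\infty)$ equals the closure of the space of all polynomials, and it therefore suffices to show that polynomials are dense in $L^2(\mathbb{R},\mu_\infty)$. Here I would invoke that $\mu_\infty$ is a finite Borel measure supported on the compact interval $[-2,2]$: continuous functions are dense in $L^2([-2,2],\mu_\infty)$, and by the Stone-Weierstrass theorem polynomials are uniformly dense in $C([-2,2])$, hence dense in $L^2$. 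Chaining these two density statements yields completeness.

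The orthonormality computation is entirely routine, and the only genuine subtlety lies in the completeness step, where the compact support of $\mu_\infty$ is essential --- for a measure with unbounded support and heavy tails, polynomials need not be dense and the associated moment problem may be indeterminate. Because $\mu_\infty$ is supported on $[-2,2]$ this difficulty never arises, so the density argument via Stone-Weierstrass closes cleanly and the two parts together establish that $\{X_r\}_{r=0}^\infty$ is a complete orthonormal basis of $L^2(\mathbb{R},\mu_\infty)$.
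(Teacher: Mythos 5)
Your proof is correct and follows essentially the same route the paper sketches: orthonormality via the identity $X_r(2\cos\theta)=\sin((r+1)\theta)/\sin\theta$ and the classical orthogonality of sines, and completeness via the density of continuous functions in $L^2(\mathbb{R},\mu_\infty)$ combined with Stone--Weierstrass on the compact support $[-2,2]$. The only addition you make is spelling out the change of variables and the degree argument explicitly, which the paper leaves as ``direct to check.''
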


By (\ref{defofkm}) and (\ref{Serrechebyshevmoment}), we have:
$$
\begin{aligned}
    \int_\mathbb{R} X_r d\mu_q&=\langle \mathbf{1}_o, X_r(q^{-1/2}A(\mathbb{T}_q))\mathbf{1}_o\rangle\\
    &=\langle \mathbf{1}_o,  \sum_{0\leq k\leq r/2}q^{-k}X_{r-2k,q}(q^{-1/2}A(\mathbb{T}_q))\mathbf{1}_o\rangle\\
    &=\langle \mathbf{1}_o,  \sum_{0\leq k\leq r/2}q^{-k-(r-2k)/2}A_{r-2k}(\mathbb{T}_q)\mathbf{1}_o\rangle\\
    &=\langle \mathbf{1}_o,  \sum_{0\leq k\leq r/2}q^{-r/2}A_{r-2k}(\mathbb{T}_q)\mathbf{1}_o\rangle.
\end{aligned}
$$
Hence, we have by the definition of non-backtracking matrices that
\[\int_\mathbb{R} X_r d\mu_q=\left\{
\begin{aligned}
    &q^{-r/2}, \ &\text{if $r$ is even};\\
    &0, \ &\text{if $r$ is odd}.
\end{aligned} \right.\]
 For the case $q>1$, we notice that $\sum_{r=0}^{\infty}q^{-r}X_{2r}$ converges on $[-2,2]$ since $|X_r(x)|\leq r+1$ when $x \in [-2,2]$.
Thus we have,
$$d\mu_{q}=\sum_{r=0}^{\infty}q^{-r}X_{2r}\ d\mu_{\infty},$$ by the uniqueness of $\mu_{q}$.
For $|x|\leq 2$, $1/(1-xt+t^2)$  is holomorphic in $|t|<1$. By (\ref{chebygeneratingfunction}), we derive
$$
\begin{aligned}
    \sum_{r=0}^{\infty}q^{-r}X_{2r}(x)&=\frac{1}{2}\left(\frac{1}{1-q^{-1/2}x+q^{-1}}+\frac{1}{1+q^{-1/2}x+q^{-1}}\right)\\
    &=\frac{q+1}{(q^{-1/2}+q^{1/2})^2-x^2}.
\end{aligned}
$$
For the case $q=1$, it is direct to check that the following distribution
$$d\nu(x)=\frac{1}{\pi}\frac{1}{\sqrt{4-x^2}}\mathbf{1}_{|x|\leq 2}\ dx$$
satisfies $\int_{\mathbb{R}} X_r d\nu=1$ when $r$ is even, and  $\int_{\mathbb{R}} X_r d\nu=0$ when $r$ is odd. Again by the uniqueness, we have $\nu=\mu_1$.\\
By the definition of the non-backtracking operators,
\[\langle A_m(\mathbb{T}_q) \mathbf{1}_o,  A_n(\mathbb{T}_q) \mathbf{1}_o\rangle=\left\{
\begin{aligned}
    &(q+1)q^{n-1}, \ &\text{if $m=n\geq 1$};\\
    &0, \ &\text{if $m\neq n$}.
\end{aligned} \right.\]

Due to Lemma \ref{ArandXrq}, we obtain
$$\begin{aligned}
    \int_\mathbb{R} X_{n,q}X_{m,q}d\mu_q&=\langle \mathbf{1}_o, X_{n,q}(q^{-1/2}A(\mathbb{T}_q))X_{m,q}(q^{-1/2}A(\mathbb{T}_q))\mathbf{1}_o\rangle\\
    &=q^{-(m+n)/2}\langle \mathbf{1}_o, A_n(\mathbb{T}_q)A_m(\mathbb{T}_q)\mathbf{1}_o\rangle\\
    &=q^{-(m+n)/2}\langle A_m(\mathbb{T}_q) \mathbf{1}_o, A_n(\mathbb{T}_q) \mathbf{1}_o\rangle.
\end{aligned}$$
This proves (\ref{orthogonalrelation}). The completeness again results from the Stone-Weierstrass approximation theorem.
\end{proof}

\subsection{Proof of Theorem \ref{kestenmain}}\label{D}
\begin{proof}[Proof of Theorem \ref{kestenmain}]
We discuss the almost surely case and in probability case separately. \\
\ \\
    \emph{Proof of the almost surely case:}
    First note that the normalized spectral measure $\mu_{n}$ is supported on $[-q^{-1/2}-q^{1/2}, q^{-1/2}+q^{1/2}]$, thus 
    $\mu_n$ converges weakly to $\mu_{q}$ if and only if $\mu_n$ converges in moments to $\mu_{q}$. Hence $\mu_n \rightharpoonup \mu_{q}$ if and only if for any $r\in \mathbb{N}$,
    $$\int_\mathbb{R} X_{r,q}(x)d\mu_{n}(x) \rightarrow \int_\mathbb{R} X_{r,q}(x)d\mu_{q}(x)\ \text{as}\ n\rightarrow \infty.$$
    Recall $$\int_\mathbb{R} X_{r,q}(x)d\mu_{n}(x)= \int_\mathbb{R} X_{r,q}(x)d\mu_{\infty}(x)\equiv 1,$$
    and $$\int_\mathbb{R}  X_{r,q}(x)d\mu_{n}(x)=q^{-r/2}\frac{f_r(G_n)}{|V_n|},$$
    $$\int_\mathbb{R}  X_{r,q}(x)d\mu_{q}(x)=0,$$
    we see $\mu_n \rightharpoonup \mu_{q}$ if and only if for any $r\geq 1$, $$\frac{f_{r}(G_n)}{|V_n|} \rightarrow 0\ \text{as}\ n\rightarrow \infty.$$ 
    In this way, for a sequence of random graph, $\mu_n \rightharpoonup \mu_{q}$ a.s. if and only if for any $r\geq 1$, $$\frac{f_{r}(G_n)}{|V_n|} \rightarrow 0 \ \text{a.s. as}\ n\rightarrow \infty.$$
    The equivalence of (ii) and (iii) is a direct result of (\ref{frzr}) of Lemma \ref{NBWandcircle}.\\
    \ \\
    \emph{Proof of the in probability case:}
    Choose $\chi \in C(\mathbb{R})$, $|\chi| \leq 1$ such that $\chi(x) \equiv 1, |x|\leq q^{-1/2}+q^{1/2}$ and $\chi(x) \equiv 0, |x|\geq q^{-1/2}+q^{1/2}+1$. \\ \ \\
 (i)$\Rightarrow$(ii): $\chi X_{r,q} \in C_b(\mathbb{R})$, by definition 
 $$\int_\mathbb{R} X_{r,q} d\mu_n=\int_\mathbb{R} \chi X_{r,q} d\mu_n\rightarrow \int_\mathbb{R} \chi X_{r,q}d\mu_q=\int_\mathbb{R} X_{r,q}d\mu_q=0,$$
 in probability for $r\geq 1$. Recall
 $$\int_\mathbb{R} X_{r,q} d\mu_n=\frac{1}{|V_n|}\mathrm{Tr}( X_{r,q}(q^{-1/2}A_n))=q^{-r/2}\frac{f_r(G_n)}{|V_n|},$$ the proof is completed.\\ \ \\
 (ii)$\Rightarrow$(i): 
 (ii) implies that for any $r\geq 0$, $\int_\mathbb{R} X_{r,q} d\mu_n \rightarrow \int_\mathbb{R} X_{r,q} d\mu_q$ in probability. (Notice that $\int_\mathbb{R} X_{0,q} d\mu_n\equiv 1$ is deterministic.) Thus for any polynomial $P$, $\int_\mathbb{R} P d\mu_n \rightarrow \int_\mathbb{R} P d\mu_q$ in probability. Fix $f\in C_b(\mathbb{R})$, We need to show for any $\varepsilon >0$,
    \begin{equation*}
        \mathbf{P}\left(\left|\int_\mathbb{R} fd\mu_n-\int_\mathbb{R} fd\mu_{q}\right| \geq \varepsilon\right) \rightarrow 0\ \text{as}\ n\rightarrow \infty,
    \end{equation*}
which is equivalent to 
 \begin{equation*}
        \mathbf{P}\left(\left|\int_\mathbb{R} \chi fd\mu_n-\int_\mathbb{R} \chi fd\mu_{q}\right| \geq \varepsilon\right) \rightarrow 0\ \text{as}\ n\rightarrow \infty.
    \end{equation*}
Now $\chi f\in C_c(\mathbb{R})$, from the Stone-Weierstrass there exits polynomial $P_{\varepsilon}$ such that $|\chi f-P_{\varepsilon}|< \varepsilon/3$ on $[-q^{-1/2}-q^{1/2}-1, q^{-1/2}+q^{1/2}+1]$. Since $\mu_n$ and $\mu_{q}$ are probability measures, we have $|\int_\mathbb{R} \chi fd\mu_n-\int_\mathbb{R} P_{\varepsilon}d\mu_{n}|< \varepsilon/3$ and $|\int_\mathbb{R} \chi fd\mu_q-\int_\mathbb{R} P_{\varepsilon}d\mu_{q}|< \varepsilon/3$. This leads to
    \begin{equation*}
        \mathbf{P}\left(\left|\int_\mathbb{R} \chi fd\mu_n-\int_\mathbb{R} \chi fd\mu_{q}\right| \geq \varepsilon\right) \leq \mathbf{P}\left(\left|\int_\mathbb{R} P_{\varepsilon}d\mu_n-\int_\mathbb{R} P_{\varepsilon} d\mu_{q}\right|\geq \varepsilon/3\right).
    \end{equation*}
    The right hand side of the above inequality tends to $0$ as $n$ tends to infinity.
\\ \ \\
(ii)$\Leftrightarrow$(iii) It is again due to (\ref{frzr}) of Lemma \ref{NBWandcircle}.

\end{proof}

\bibliographystyle{abbrv}

\end{document}